\DeclareMathOperator{\Ker}{Ker}
\DeclareMathOperator{\Hom}{Hom}
\DeclareMathOperator{\rk}{rk}
\DeclareMathOperator{\odd}{odd}
\DeclareMathOperator{\even}{even}
\DeclareMathOperator{\Cy}{Cy}
\DeclareMathOperator{\St}{St}
\DeclareMathOperator{\Sunn}{Sun}
\DeclareMathOperator{\Net}{Net}
\DeclareMathOperator{\QR}{QR}
\DeclareMathOperator{\diag}{diag}
\DeclareMathOperator{\Fl}{Fl}
\DeclareMathOperator{\diff}{diff}
\DeclareMathOperator{\Pe}{Pe}
\DeclareMathOperator{\girth}{girth}
\DeclareMathOperator{\Cl}{Cl}
\DeclareMathOperator{\Gr}{Gr}
\DeclareMathOperator{\pr}{pr}
\DeclareMathOperator{\Hilb}{Hilb}
\DeclareMathOperator{\Inter}{Inter}
\DeclareMathOperator{\inv}{inv}
\DeclareMathOperator{\adi}{seam}
\newcommand{\ca}[1]{\mathcal{#1}}
\newcommand{\Hr}{\widetilde{H}}
\newcommand{\dd}{\partial}
\newcommand{\I}{\mathbb{I}}
\def\Co{\mathbb C}
\def\Ro{\mathbb R}
\def\Qo{\mathbb Q}
\def\Zo{\mathbb Z}
\def\Zt{\mathbb Z_2}
\newcounter{stmcounter}[section]
\newcounter{thcounter}
\newcounter{problcounter}
\numberwithin{equation}{section}
\theoremstyle{plain}
\newtheorem{cor}[stmcounter]{Corollary}
\newtheorem{thm}[thcounter]{Theorem}
\newtheorem{prop}[stmcounter]{Proposition}
\newtheorem{lem}[stmcounter]{Lemma}
\newtheorem{que}[problcounter]{Question}
\theoremstyle{definition}
\newtheorem{defin}[stmcounter]{Definition}
\theoremstyle{remark}
\newtheorem{ex}[stmcounter]{Example}
\newtheorem{rem}[stmcounter]{Remark}
\newtheorem{con}[stmcounter]{Construction}
\newtheorem{algor}[stmcounter]{Algorithm}
\begin{document}
\title{Topological approach to diagonalization algorithms}

\author{Anton Ayzenberg}
\address{Faculty of computer science, National Research University Higher School of Economics, Russian Federation}
\email{ayzenberga@gmail.com}

\author{Konstantin Sorokin}
\address{Faculty of computer science, National Research University Higher School of Economics, Russian Federation}
\email{mopsless7@gmail.com}

\date{\today}
\thanks{The article was prepared within the framework of the HSE University Basic Research Program}

\keywords{QR-algorithm, symmetric Toda flow, full flag variety, spectrum of a matrix, sparse matrix, Morse theory, equivariant formality, torus action, computational topology, indifference graphs, Hessenberg function, GKM theory}

\subjclass[2020]{Primary: 57S12, 14M15, 15A20, 37C25, 37D15, 55M35, 05C78, 57-08 Secondary: 37C05, 55N25, 55N30, 52C45, 05C62, 05C75, 57-04, 55T10}

\begin{abstract}
In this paper we prove that there exists an asymptotical diagonalization algorithm for a class of sparse Hermitian (or real symmetric) matrices if and only if the matrices become Hessenberg matrices after some permutation of rows and columns. The proof is based on Morse theory, Roberts' theorem on indifference graphs, toric topology, and computer-based homological calculations.
\end{abstract}

\maketitle


\section{Introduction}\label{secIntro}

In this paper we address the following question which will soon be explained in detail.

\begin{que}
Which sparseness types of Hermitian (or real symmetric) matrices can be diagonalized by QR-type algorithms?
\end{que}

Let $M_\lambda$ denote the set of all Hermitian matrices of size $n$ with the given spectrum $\lambda=\{\lambda_1,\ldots,\lambda_n\}$. The classical QR-algorithm can be viewed as a cascade (dynamical system with discrete time) generated by $\QR\colon M_\lambda\to M_\lambda$ with the property that
\[
\lim_{n\to+\infty}\QR^n(A)=\diag(\lambda_{\sigma(1)},\ldots,\lambda_{\sigma(n)})
\]
for any initial matrix $A\in M_\lambda$ and some permutation $\sigma\in\Sigma_n$. There also exists a continuous version of the QR-algorithm, namely the flow of the full symmetric Toda lattice: $\dot{A}=[A,P(A)]$, where $P(L)$ is the antisymmetrization of $L$, see~\cite{Chu}. It is known that, for a simple spectrum $\lambda$ (which means there are no multiple eigenvalues $\lambda_i$), the space $M_\lambda$ is diffeomorphic to the variety $\Fl(\Co^n)$ of full flags in $\Co^n$. It is also known (see~\cite{BBR,BG,dMP,ChShSo}) that there exists a Riemannian metric $g$ and a Morse function $f$ on $M_\lambda$ such that the Toda flow is the gradient flow of~$f$. The stationary points of the Toda flow (the critical points of $f$) are again diagonal matrices with spectrum $\lambda$.

We consider the submanifolds of $M_\lambda$ given by some sparse forms of matrices. By a sparse matrix we mean a matrix with vanishing condition for some off-diagonal entries. It is convenient to encode the sparseness type by a simple graph $\Gamma$ with the vertex set $[n]=\{1,2,\ldots,n\}$ and an edge set $E_\Gamma$. A matrix $A=(a_{ij})$ is called \emph{$\Gamma$-shaped} if $a_{ij}=0$ for all $\{i,j\}\notin E_\Gamma$. Consider the space $M_{\Gamma,\lambda}$ of all $\Gamma$-shaped Hermitian matrices with spectrum $\lambda$. By Sard's lemma, the space $M_{\Gamma,\lambda}$ is a smooth manifold for generic $\lambda$. Although the precise description of all $\lambda$ for which $M_{\Gamma,\lambda}$ is smooth is unknown in general, in the following we will always assume that spectra are simple. 

There are particular cases of sparse matrices playing important roles in applications, namely Hessenberg (or staircase) matrices, where nonzero entries are allowed in a contiguous interval adjacent to main diagonal. It is convenient to encode staircase form by a Hessenberg function. A function $h\colon [n]\to[n]$ is called \emph{a Hessenberg function} if $h(i)\geqslant i$ for each $i\in[n]$, and $h(1)\leqslant h(2)\leqslant \cdots\leqslant h(n)$. Let $\Gamma(h)$ be the graph on the set $[n]$ with the edge set $E_h=\{(i,j)\mid i\leqslant j\leqslant h(i)\}$. Then $\Gamma(h)$-shaped matrices are the matrices $A$ whose entries $a_{ij}$ vanish for $j>h(i)$ (or $i>h(j)$). These matrices have staircase form determined by $h$: the value $h(i)$ encodes the lowest position at $i$-th column, where non-zero element is allowed.

It is well known that both the QR-algorithm and the flow of the full symmetric Toda lattice can be restricted to the submanifold $M_{\Gamma(h),\lambda}\subset M_\lambda$ of staircase matrices. This motivates the following definition.

\begin{defin}\label{definDiagonClass}
A class of $\Gamma$-shaped matrices is called \emph{a diagonalizable class} if there exists a Morse--Smale flow on a manifold $M_{\Gamma,\lambda}$ whose set of periodic trajectories is the discrete set of all diagonal matrices. In this case the graph $\Gamma$ is said to have \emph{diagonalizable type}.
\end{defin}

Instead of Morse--Smale flow in the definition one can use a Morse--Smale cascade (dynamical system with discrete time), see Remark~\ref{remSmaleCascades}.  

The existence of Toda flow on staircase matrices proves that $\Gamma(h)$ has diagonalizable type for each Hessenberg function $h$, see details in~\cite{AyzStaircase}. Notice that some matrices are not staircase, however they become staircase after relabeling rows and columns with the same permutation of indices. For example,
\begin{equation}\label{eqMatrixToAnother}
\begin{pmatrix}
  \ast & \ast & \ast \\
  \ast & \ast & 0 \\
  \ast & 0 & \ast
\end{pmatrix}
\stackrel{(2,3)}{\longrightarrow}
\begin{pmatrix}
  \ast & \ast & 0 \\
  \ast & \ast & \ast \\
  0 & \ast & \ast
\end{pmatrix},
\end{equation}
the matrix becomes staircase after permuting 2-nd and 3-rd rows and columns. In general, if $\sigma\in\Sigma_n$ is a permutation of the set $[n]$, and $\sigma\Gamma$ is the graph obtained from $\Gamma$ by relabelling the vertices with $\sigma$, there is a natural diffeomorphism
\begin{equation}\label{eqRelabelDiffeo}
\diff_\sigma\colon M_{\Gamma,\lambda}\to M_{\sigma\Gamma,\lambda}
\end{equation}
given by $\diff_\sigma(A)=P_\sigma AP_{\sigma}^{-1}$ for the permutation matrix $P_\sigma$. Since a manifold remains diffeomorphic under the permutation action, the diagonalizable property of a graph does not depend on a particular labelling of vertices: it depends only on the isomorphism class of a graph. 

Consequently, all graphs isomorphic to $\Gamma(h)$ for a Hessenberg function $h$, have diagonalizable type. The purpose of this paper is to prove the converse.

\begin{thm}\label{thmMainDtypeChar}
A class of $\Gamma$-shaped matrices is diagonalizable if and only if $\Gamma$ is isomorphic to a Hessenberg graph $\Gamma(h)$ for some Hessenberg function $h$.
\end{thm}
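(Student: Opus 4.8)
The plan is to prove the non-trivial (``only if'') implication; the ``if'' direction is the cited fact that the full symmetric Toda flow restricts to $M_{\Gamma(h),\lambda}$ as a Morse--Smale flow having exactly the diagonal matrices as rest points \cite{AyzStaircase}. The key observation is that the compact torus $T^n$ acts on $M_{\Gamma,\lambda}$ by conjugation, $t\cdot A=(t_i\bar t_j a_{ij})_{i,j}$; this preserves the $\Gamma$-shape, and (for simple $\lambda$) its fixed points are exactly the $n!$ diagonal matrices. I will show that diagonalizability of $\Gamma$ forces this action to be equivariantly formal over $\Qo$, and then that equivariant formality forces $\Gamma$ to be an indifference graph; Roberts' theorem then identifies indifference graphs with the Hessenberg graphs $\Gamma(h)$.

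For the necessary condition: if $M_{\Gamma,\lambda}$ carries a Morse--Smale flow with $n!$ hyperbolic rest points and no closed orbits, the Morse--Smale--Witten complex is generated by those rest points, so $\sum_i\dim_\Qo H_i(M_{\Gamma,\lambda};\Qo)\le n!$ (and the same bound holds for a Morse--Smale cascade). Conversely, the standard localization estimate for torus actions on compact manifolds, $\sum_i\dim_\Qo H_i(M^{T};\Qo)\le\sum_i\dim_\Qo H_i(M;\Qo)$, applied to the full torus gives $\sum_i\dim_\Qo H_i(M_{\Gamma,\lambda};\Qo)\ge n!$. Hence equality, which is precisely equivariant formality of the $T^n$-action (equivalently, the total rational Betti number of $M_{\Gamma,\lambda}$ equals $n!$).

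The reduction to small graphs goes through a heredity lemma. For an induced subgraph $\Gamma'=\Gamma|_S$ on a vertex set $S$ of size $m$, the subtorus $T_S=\{t\in T^n:\ t_i=1\text{ for }i\in S\}$ has fixed locus consisting of block matrices with a $\Gamma'$-shaped $S\times S$ block and arbitrary diagonal entries elsewhere; distributing the spectrum between the block and the diagonal part identifies $(M_{\Gamma,\lambda})^{T_S}$ with $\bigsqcup_{\lambda'\subset\lambda,\,|\lambda'|=m}M_{\Gamma',\lambda'}\times\{(n-m)!\text{ points}\}$. Since the total Betti number of $M_{\Gamma',\lambda'}$ does not depend on the (generic simple) spectrum, the localization estimate for $T_S$ yields $\sum_i b_i(M_{\Gamma,\lambda};\Qo)\ge\tfrac{n!}{m!}\sum_i b_i(M_{\Gamma',\lambda'};\Qo)$. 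So if some induced subgraph $\Gamma'$ on $m$ vertices has $\sum_i b_i(M_{\Gamma',\lambda'};\Qo)>m!$, then $\sum_i b_i(M_{\Gamma,\lambda};\Qo)>n!$ and $\Gamma$ is not diagonalizable. By Roberts' theorem a graph that is not an indifference graph contains one of the ``forbidden'' induced subgraphs --- a claw $K_{1,3}$, a net, a sun, or a hole $C_k$ with $k\ge 4$ --- so it suffices to prove $\sum_i b_i>m!$ for each of these.

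For the claw, the net, and the sun this is a finite check carried out by computer (using a CW model, or the cell structure coming from the torus action together with a Morse function) --- this is the source of the computer-based homological computations, and the ``sun'' already yields a manifold of dimension $2|E|=18$, so the computation must be organized rather than done by brute force. The \emph{main obstacle} is the infinite family of holes $C_k$: every proper induced subgraph of $C_k$ is a disjoint union of paths, hence an indifference graph, so the heredity lemma gives nothing, and one cannot compute $H_*(M_{C_k,\lambda})$ for all $k$ by machine. I expect this to require either a closed-form description of the Betti numbers of the isospectral periodic tridiagonal manifold $M_{C_k,\lambda}$, or an inductive argument (e.g. a fibration relating $M_{C_k,\lambda}$ to $M_{C_{k-1},\lambda}$ or to flag manifolds), in each case showing $\sum_i b_i(M_{C_k,\lambda};\Qo)>k!$ for $k\ge 4$, with the small cases $C_4,C_5$ handled by computer. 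Putting the pieces together: diagonalizable $\Rightarrow$ equivariantly formal $\Rightarrow$ total rational Betti number $=n!$ $\Rightarrow$ no forbidden induced subgraph $\Rightarrow$ $\Gamma$ is an indifference graph $\Rightarrow$ $\Gamma\cong\Gamma(h)$, which with the cited converse completes the proof.
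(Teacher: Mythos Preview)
Your overall strategy---diagonalizable $\Rightarrow$ total rational Betti number equals $n!$ $\Rightarrow$ equivariantly formal $\Rightarrow$ (via Roberts) no forbidden induced subgraph---matches the paper's. The heredity step is handled a bit differently in the paper: rather than the localization inequality together with your claim that the total Betti number of $M_{\Gamma',\lambda'}$ is independent of the generic spectrum (a claim that would itself need justification), the paper invokes the standard lemma that equivariant formality passes to every connected component of $X^H$ for any closed subgroup $H\subseteq T$; applied to the coordinate subtorus on $[n]\setminus S$, this shows directly that each $M_{\Gamma',\lambda'}$ is equivariantly formal whenever $M_{\Gamma,\lambda}$ is, with no appeal to spectrum-independence.

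The genuine gap is the cycle family. You correctly flag $C_k$ ($k\ge 4$) as the obstacle---its proper induced subgraphs are unions of paths, so heredity says nothing---but your proposal then only speculates on possible approaches (closed-form Betti numbers, or a fibration/inductive argument). The paper's resolution is short and different from either of your guesses: it quotes from \cite{AyzPeriodic} that $\pi_1(M_{\Cy_k,\lambda})\cong\Zo^{k-3}$ whenever the manifold is smooth, so $H_1(M_{\Cy_k,\lambda};R)\neq 0$ for $k\ge 4$ over any coefficient ring, and non-formality follows at once from the criterion $H^{\odd}=0$. (Incidentally, the claw is not left to the computer either: its Betti numbers $(1,1,12,0,12,1,1)$ were computed in \cite{AyzArrows} via the orbit space $M_{\St_3,\lambda}/T^3\cong D^2\times S^1$.) With the cycle case supplied in this way your outline becomes a complete proof.
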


There is a name for graphs isomorphic to Hessenberg graphs.

\begin{defin}\label{definIndifGraph}
A graph $\Gamma$ is called \emph{an indifference graph} (or \emph{a unit interval graph}, or \emph{a proper interval graph}) if it is the intersection graph of some collection of closed unit intervals on a line $\Ro$.
\end{defin}

So far, $\Gamma=([n],E_\Gamma)$ is an indifference graph if and only if there exists a set of points $x_1,\ldots,x_n\in\Ro$ on a line such that $\{i,j\}\in E_\Gamma \Leftrightarrow |x_i-x_j|\leqslant 1$. We refer to the work of Roberts~\cite{RobPsych} who introduced the notion and the term.

\begin{prop}[\cite{Mertz}]\label{propMertzios}
A graph $\Gamma$ is isomorphic to $\Gamma(h)$ for some Hessenberg function $h$ if and only if $\Gamma$ is an indifference graph.
\end{prop}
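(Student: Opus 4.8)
The plan is to prove both implications directly by elementary graph combinatorics; the statement is essentially Roberts' correspondence between proper interval orderings and staircase patterns, so one can alternatively cite \cite{Mertz}, but the argument below is short and self-contained. Suppose first that $\Gamma$ is an indifference graph, and fix points $x_1,\ldots,x_n\in\Ro$ with $\{i,j\}\in E_\Gamma\Leftrightarrow|x_i-x_j|\leqslant1$ (such points exist by the reformulation of the indifference condition stated just above). The property of being an indifference graph and the property of being isomorphic to some $\Gamma(h)$ are both invariants of the isomorphism class, so I may relabel the vertices so that $x_1\leqslant x_2\leqslant\cdots\leqslant x_n$. Set $h(i)=\max\{j\in[n]\mid x_j\leqslant x_i+1\}$. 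Then $x_i\leqslant x_i+1$ gives $h(i)\geqslant i$, and $x_i\leqslant x_{i'}$ for $i\leqslant i'$ gives $\{j\mid x_j\leqslant x_i+1\}\subseteq\{j\mid x_j\leqslant x_{i'}+1\}$, hence $h(i)\leqslant h(i')$; so $h$ is a Hessenberg function. Finally, for $i<j$ the sortedness of the points gives $\{i,j\}\in E_\Gamma\Leftrightarrow x_j-x_i\leqslant1\Leftrightarrow x_j\leqslant x_i+1\Leftrightarrow j\leqslant h(i)$, and $j\leqslant h(i)$ is precisely the condition that $\{i,j\}$ is an edge of $\Gamma(h)$. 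Thus $\Gamma=\Gamma(h)$ in this labelling.

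For the converse, assume $\Gamma\cong\Gamma(h)$, so I may take $\Gamma=\Gamma(h)$ on the vertex set $[n]$. Since $h(i)\geqslant i$ and $h$ is non-decreasing, the order $1<2<\cdots<n$ is an ``indifference order'': for $i<j<k$, if $\{i,k\}$ is an edge then $k\leqslant h(i)\leqslant h(j)$, so $\{i,j\}$ and $\{j,k\}$ are edges as well. I would then construct points $x_1<x_2<\cdots<x_n$ with the property that, for all $i\leqslant j$, one has $x_j-x_i\leqslant1$ if and only if $j\leqslant h(i)$; such points are a unit-interval representation of $\Gamma(h)$, which is what we want. The construction is inductive. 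Having placed $x_1<\cdots<x_k$, the next point $x_{k+1}$ must satisfy $x_{k+1}>x_k$, the strict lower bounds $x_{k+1}>x_i+1$ for every $i\leqslant k$ with $h(i)\leqslant k$ (the intended non-edges $\{i,k+1\}$), and the upper bounds $x_{k+1}\leqslant x_i+1$ for every $i\leqslant k$ with $h(i)>k$ (the intended edges $\{i,k+1\}$). Carrying along the strengthened inductive hypothesis that $x_j-x_i<1$ strictly whenever $i<j\leqslant k$ and $j\leqslant h(i)$, one checks that the admissible interval for $x_{k+1}$ is non-empty: whenever $h(i)\leqslant k<h(i')$ with $i,i'\leqslant k$, monotonicity of $h$ forces $i<i'$, so $x_i+1<x_{i'}+1$ and every lower bound lies strictly below every upper bound; and $x_k<x_{i'}+1$ for each such $i'$ (trivially if $i'=k$, otherwise by the strengthened hypothesis applied to the edge $\{i',k\}$). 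Choosing $x_{k+1}$ strictly inside this interval continues the induction and completes the construction.

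The only nontrivial point is the non-emptiness of the admissible interval in the inductive step above, and it reduces entirely to the fact that $h$ being non-decreasing means $h(i)<h(i')\Rightarrow i<i'$; the remaining verifications are routine bookkeeping.
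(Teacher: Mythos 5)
Your proposal is correct and fills in a complete, self-contained argument where the paper merely cites~\cite{Mertz} and offers a one-sentence remark. The forward direction (sort the representation points and read off $h(i)=\max\{j\mid x_j\leqslant x_i+1\}$) is precisely what the paper's remark sketches, and the converse is the standard inductive construction of a unit-interval representation; the crucial observation---that monotonicity of $h$ forces every lower bound $x_i+1$ (for $h(i)\leqslant k$) to lie strictly below every upper bound $x_{i'}+1$ (for $h(i')>k$), so the admissible interval for $x_{k+1}$ is nonempty---is stated and used correctly, and the strengthened hypothesis $x_j-x_i<1$ on edges is maintained by choosing $x_{k+1}$ strictly interior. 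One might add for completeness that when no $i'\leqslant k$ has $h(i')>k$ there is simply no upper bound and $x_{k+1}$ can be taken arbitrarily large (this happens only when $\Gamma(h)$ is disconnected, which the paper's running hypothesis excludes), but this is a trivial degenerate case and does not affect correctness.
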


If, in the definition of an indifference graph, we have $x_1<\cdots<x_n$, then $\Gamma=\Gamma(h)$ so the order of points correspond to the correct order of vertices inducing a Hessenberg function.

It follows from Proposition~\ref{propMertzios}, that all indifference graphs determine diagonalizable matrix classes as explained above. The nontrivial part of Theorem~\ref{thmMainDtypeChar} is therefore the following statement.

\begin{prop}\label{propMainOneSide}
If $\Gamma$ is not an indifference graph, then $\Gamma$ is not of diagonalizable type.
\end{prop}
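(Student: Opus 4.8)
The plan is to show that if $\Gamma$ is not an indifference graph, then no Morse--Smale flow on $M_{\Gamma,\lambda}$ can have the diagonal matrices as its exact set of closed trajectories. The strategy is a Morse-theoretic obstruction: if such a flow existed, the number of diagonal matrices (which is $n!$, one for each permutation) together with their indices would have to be compatible with the homology of $M_{\Gamma,\lambda}$ via the Morse inequalities; we derive a contradiction from the topology of $M_{\Gamma,\lambda}$ when $\Gamma$ is not an indifference graph.

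\smallskip

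First I would set up the torus action. The standard $T^{n-1}$-action (or $T^n$ acting through its quotient) on Hermitian matrices by conjugation by diagonal unitaries preserves $M_{\Gamma,\lambda}$, and its fixed points are exactly the diagonal matrices. Thus the would-be critical set of the Morse--Smale flow coincides with the torus fixed-point set $M_{\Gamma,\lambda}^T$, which is finite with $|M_{\Gamma,\lambda}^T| = n!$. A Morse--Smale flow with finite critical set whose critical points are nondegenerate forces, via the Morse inequalities, the bound $\sum_i \dim_{\Qo} H_i(M_{\Gamma,\lambda};\Qo) \leqslant n!$, and in fact the existence of such a flow is tightly linked to \emph{equivariant formality} of the action: when the sum of Betti numbers equals the number of fixed points, the flow is \emph{perfect}, $M_{\Gamma,\lambda}$ has homology only in even degrees, and GKM theory applies. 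So the first reduction is: \emph{diagonalizable type} $\Longrightarrow$ $\sum_i \dim_{\Qo} H_i(M_{\Gamma,\lambda};\Qo) = n!$ and the odd-degree rational homology vanishes. (One direction is Morse inequalities; for the flow to close up with exactly the diagonal matrices as periodic orbits one needs equality, hence perfectness.)

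\smallskip

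Second, I would compute, or bound from below, the homology of $M_{\Gamma,\lambda}$ for graphs that are not indifference graphs. By Roberts' theorem, a graph fails to be an indifference graph precisely when it contains one of a known finite list of forbidden induced subgraphs --- the claw $K_{1,3}$, the cycles $C_k$ for $k \geqslant 4$, the ``net,'' the ``tent/sun,'' and the ``3-fan'' (these are exactly the operators $\Cl$, $\Cy$, $\Net$, $\Sunn$, $\Fl$ hinted at in the preamble). The plan is to reduce to these minimal obstructions: removing a vertex $i$ from $\Gamma$ corresponds to restricting to matrices with a zero row/column, and there should be a fibration- or retraction-type relation between $M_{\Gamma,\lambda}$ and $M_{\Gamma \setminus i,\lambda'}$ allowing one to propagate a homological anomaly (odd torsion, or odd-degree rational homology, or an excess in total Betti number) from an induced forbidden subgraph up to $\Gamma$ itself. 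For each of the finitely many forbidden graphs one then performs an explicit homology computation --- this is where the ``computer-based homological calculations'' advertised in the abstract enter: one triangulates or CW-decomposes $M_{\Gamma,\lambda}$ (the cells being indexed by the combinatorics of $\Gamma$ and permutations, as in the staircase case) and checks that either $H_*$ has odd-degree classes or $\sum \dim H_i > $ (number of fixed points of the ambient torus restricted to this stratum).

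\smallskip

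The main obstacle I anticipate is the \textbf{propagation step}: showing that a homological defect of an induced forbidden subgraph survives in the ambient $M_{\Gamma,\lambda}$. Induced-subgraph passage is not a clean topological operation on these matrix varieties --- $M_{\Gamma\setminus i,\lambda}$ sits inside $M_{\Gamma,\lambda}$ as a singular-looking locus, not a fiber or a retract in general. I expect one needs either (a) a Mayer--Vietoris / spectral sequence argument adapted to the torus-equivariant setting, using that the isotropy stratification of $M_{\Gamma,\lambda}$ by the $T$-action has pieces governed by subgraphs of $\Gamma$, or (b) a GKM-graph argument: the equivariant one-skeleton of $M_{\Gamma,\lambda}$ (if the action were GKM) is a graph on $n!$ vertices whose edges and axial function are dictated by $\Gamma$, and one shows that when $\Gamma$ is not an indifference graph this combinatorial GKM-graph cannot be the one-skeleton of an equivariantly formal space --- e.g. its ``graph cohomology'' has the wrong dimension, or the necessary compatibility (a well-defined connection / integrality) fails. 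Route (b) is attractive because it turns the topological obstruction into a purely combinatorial statement about $\Gamma$ and the symmetric group, matching the GKM-theory keyword; the technical heart is then identifying the edges of the GKM graph with the weights of the conjugation action at each diagonal matrix and verifying the forbidden-subgraph computations. I would pursue (b), falling back on direct CW-homology computations (route from the second paragraph) for the base cases of the finite forbidden list.
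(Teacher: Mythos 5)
Your high-level plan matches the paper's argument: convert ``diagonalizable type'' into a bound on the total Betti number via Morse inequalities, combine with the Euler characteristic $\chi(M_{\Gamma,\lambda}) = n!$ (isolated torus fixed points) to see that equivariant formality is exactly what is at stake, invoke Roberts' forbidden-subgraph characterization of indifference graphs, and dispose of the finitely many minimal obstructions by explicit (computer-aided) homology calculations.

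The genuine gap is the step you yourself flag as the main obstacle: \emph{propagating} non-formality from an induced forbidden subgraph $\Gamma_B$ up to the ambient $\Gamma$. You speculate that $M_{\Gamma_B,\lambda_B}$ sits inside $M_{\Gamma,\lambda}$ as a ``singular-looking locus, not a fiber or a retract,'' and propose a Mayer--Vietoris/spectral-sequence workaround or a GKM-combinatorial detour. In fact the paper's solution is much more direct: take the coordinate subtorus $H = T^{[n]\setminus B} \subset T^n$. By the explicit formula for the conjugation action, the $H$-fixed locus of $M_{\Gamma,\lambda}$ consists precisely of block-diagonal matrices with one big $B\times B$ block and scalar blocks elsewhere; each connected component of $M_{\Gamma,\lambda}^H$ is therefore diffeomorphic to $M_{\Gamma_B,\lambda_B}$ for some choice $\lambda_B\subset\lambda$. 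This is a closed, smooth, $T$-invariant submanifold, and the Masuda--Panov lemma (vanishing of odd cohomology passes from $X$ to connected components of $X^H$ for $H\subseteq T$ closed) then propagates equivariant formality from $M_{\Gamma,\lambda}$ to $M_{\Gamma_B,\lambda_B}$, which is exactly the contrapositive you need. Without this observation your proposal stalls: the ``propagation'' is not a hypothetical technical lemma you can postpone but a specific structural fact about the isotropy stratification, and the fallback routes you suggest (a Mayer--Vietoris argument on the isotropy strata, or a purely combinatorial GKM-graph obstruction) are not self-contained — the GKM route already presupposes equivariant formality to apply Chang--Skjelbred, so it cannot by itself disprove formality without an auxiliary acyclicity-type result, which is what the paper supplies via face posets of torus actions.

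Two smaller inaccuracies: your list of forbidden induced subgraphs includes a ``3-fan'' and reads off five macro names from the preamble — the actual Roberts list is only four families: $\Cy_k$ for $k\geqslant 4$, the claw $\St_3$, the net $\Net$, and the 3-sun $\Sunn$ (the macro $\Fl$ denotes the flag variety, not a graph). Also, the reduction from ``diagonalizable type'' to a gradient Morse flow with the same stationary points requires citing Smale's theorem on Morse--Smale systems; stating the Morse inequalities alone leaves unaddressed why the given Morse--Smale flow can be replaced by a gradient flow on which Morse theory directly applies.
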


The proof of Proposition~\ref{propMainOneSide} breaks into the following steps.
\begin{enumerate}
  \item Characterization of indifference graphs in terms of forbidden induced subgraphs obtained by Roberts~\cite{Roberts}. The forbidden subgraphs are cycle graphs $\Cy_k$ with $k\geqslant 4$, the claw graph $\St_3$, the 3-sun graph $\Sunn$, and the net graph $\Net$, see Fig.~\ref{figRoberts} below.
  \item Morse inequalities. In order to prove that $M_{\Gamma,\lambda}$ does not support a Morse--Smale flow with $n!$ critical points, it is sufficient to prove that the total Betti number $\rk H_*(M_{\Gamma,\lambda};R)$ is greater than $n!$ for some coefficient ring $R$.
  \item Each manifold $M_{\Gamma,\lambda}$ carries a natural compact torus action. We observe that the inequality $\rk H_*(M_{\Gamma,\lambda};R)>n!$ holds if and only if the manifold $M_{\Gamma,\lambda}$ is not \emph{cohomologically equivariantly formal} over $R$ in the sense of Goresky--Kottwitz--MacPherson. The results of toric topology allow to simplify the proof of non-equivariant formality. To prove the general result, it is sufficient to prove that forbidden subgraphs from item 1 produce non-equivariantly formal manifolds.
  \item Non-equivariant formality in case of $\Cy_k$, $k\geqslant 4$, and $\St_3$ was proved in~\cite{AyzStaircase} and~\cite{AyzArrows} respectively, and got a uniform explanation in terms of graphicahedra and cluster-permutohedra in the recent paper~\cite{AyzBuchGraph}. Graphicahedron~\cite{Graphicahedron} is a certain finite poset associated with a graph and cluster-permutohedron is its core in the sense of finite topology. Cluster-permutohedron $\Cl_\Gamma$ is the face poset of the torus action on $M_{\Gamma,\lambda}$. To prove non-formality of $M_{\Gamma,\lambda}$, we apply the result of~\cite{AyzMasSolo}, which states that face posets of equivariantly formal actions satisfy certain acyclicity conditions. The main observation of~\cite{AyzBuchGraph} is that graphicahedra (and cluster-permutohedra) of $\Cy_k$, $k\geqslant 4$, and $\St_3$ have nontrivial simplicial cohomology in degree $1$ and this contradicts equivariant formality of the corresponding matrix manifolds.
  \item In the current paper we finalize the proof of Proposition~\ref{propMainOneSide} by proving non-formality of isospectral matrix manifolds corresponding to $\Sunn$ and $\Net$. It happens that the idea used for $\Cy_k$ and $\St_3$ also works in these cases, however one has to compute 3-rd homology groups of the corresponding cluster-permutohedra. This cannot be done by hand, however the problem is solvable by a script in Sage~\cite{AyzCode}. 
  \item We also developed an alternative and potentially more general computational approach to prove non-formality which independently confirms our result. Conceptually, instead of looking at ordinary cohomology of cluster-permutohedra, we look at the cohomology of certain sheaves over these posets, namely the GKM-sheaves. Originally, such sheaves were defined over equivariant 1-skeleta of torus actions~\cite{Baird}, but there exists a natural way to extend them to higher dimensional structures: the face posets of torus actions. If a torus action is equivariantly formal, the GKM-sheaf satisfies certain homological properties. The most basic and general property is Atiyah--Bredon--Franz--Puppe (ABFP) exact sequence~\cite{FP}: this is the strengthening of Chang--Skjelbred theorem~\cite{ChSk} (which is, in turn, the principal tool used in GKM-theory~\cite{GKM}). Checking exactness of ABFP sequence is an algorithmic task. Although we could not treat the whole ABFP sequence of $M_{\Sunn,\lambda}$ and $M_{\Net,\lambda}$ due to extremely high computational complexity of this problem, we were able to make calculations which contradict to the exactness of the ABFP sequence. This approach proves non-formality of all manifolds $M_{\St_3,\lambda}$, $M_{\Cy_k,\lambda}$, $M_{\Net,\lambda}$, $M_{\Sunn,\lambda}$ as well.
\end{enumerate}

It should be noticed that the study of topology of isospectral matrix manifolds of various types using integrable dynamical systems is a common area of research, see~\cite{Tomei,dMP,Penskoi}. In this paper we solve a somewhat opposite task: we apply topology to prove that dynamical systems with certain properties do not exist.

The paper has the following structure. In Section~\ref{secDefResults} we give all the required definitions and details missing in the introduction. In Section~\ref{secKnownToricCySt} we review two proofs of non-formality of matrix manifolds corresponding to $\Cy_k, k\geqslant 4$ and $\St_3$. In Section~\ref{secSunAndNet}, the basics of ABFP sequence and GKM theory are recalled. They are used to prove non-formality of manifolds corresponding to $\Net$ and $\Sunn$ with computer-aided experiments. In Section~\ref{secLast}, we observe that all arguments of the paper remain valid for the real versions of $M_{\Gamma,\lambda}$: the manifolds of isospectral real symmetric matrices. In the real case, instead of torus actions we have discrete 2-torus actions, and several recent results of real toric topology can be applied. Finally, in Section~\ref{secLast} we introduce a new graph invariant motivated by the current study, which can be applied in the design of diagonalization algorithms.

\section{Definitions, results, and basic steps of proof}\label{secDefResults}

Let $M_n$ denote the vector space of all Hermitian matrices of size $n$. We have $\dim_\Ro M_n=n^2$. For a given set
$\lambda = \{\lambda_1,\ldots,\lambda_{n}\}$ of pairwise distinct real numbers consider the subset $M_{\lambda}\subset M_n$ of all
matrices with eigenvalues $\{\lambda_1,\ldots,\lambda_{n}\}$ where it is assumed that $\lambda_1<\lambda_2<\cdots<\lambda_{n}$. Let $U(n)$ be the group of unitary matrices and $T^{n}\subseteq U(n)$ be the compact torus of diagonal unitary matrices
\[
T^{n}=\left\{\diag(t_1,\ldots,t_{n}), t_i\in \Co, |t_i|=1\right\}
\]
The group $U(n)$ acts on $M_{n}$ by conjugation: this is essentially the coadjoint representation of $U(n)$. It easily follows that $M_\lambda$ is diffeomorphic to the manifold $\Fl(\Co^n)=U(n)/T^{n}$ of full complex flags, since both are homogeneous spaces of $U(n)$ with the same stabilizer $T^n$. Moreover, there is a trivial smooth fibration
\begin{equation}\label{eqGmap}
g\colon M_n\setminus\Sigma\to C,
\end{equation}
where $\Sigma$ is the set of Hermitian matrices with multiple eigenvalues, $C=\{(\lambda_1,\ldots,\lambda_n)\in\Ro^{n}\mid
\lambda_1<\cdots<\lambda_n\}$ is an open Weyl chamber, and $g$ maps the matrix to its eigenvalues, listed in the increasing order. The fiber of $g$ over a point $\lambda$ is the manifold $M_\lambda$. We have $\dim_\Ro M_{\lambda}=n(n-1)$. The group $T^{n}$ acts on $M_{n}$ by conjugation: $A\mapsto DAD^{-1}$. In coordinate notation we have
\begin{equation}\label{eqActionExplicit}
(a_{ij})_{\substack{i=1,\ldots,n\\j=1,\ldots,n}}\mapsto
(t_it_j^{-1}a_{ij})_{\substack{i=1,\ldots,n\\j=1,\ldots,n}}
\end{equation}
Scalar matrices commute with every matrix $A$, therefore the diagonal subgroup of the torus acts non-effectively. The fixed points
of the $T^n$-action on $M_{\lambda}$ are diagonal matrices with spectrum $\lambda$. These are the diagonal matrices of the form
$A_\sigma=\diag(\lambda_{\sigma(1)},\lambda_{\sigma(2)},\ldots,\lambda_{\sigma(n)})$ for all possible permutations $\sigma\in \Sigma_{n}$.

Let $\Gamma$ be a simple graph by which we mean a finite graph without multiple edges and loops, on the vertex set $[n]$, and an edge set $E_\Gamma$. A graph $\Gamma$ is assumed connected unless stated otherwise. 

\begin{defin}\label{definGammaShaped}
Consider the vector subspace of Hermitian matrices
\begin{equation}\label{eqMGamma}
M_\Gamma=\{A\in M_{n}\mid a_{ij}=0, \mbox{ if } \{i,j\}\notin E_\Gamma\}.
\end{equation}
Matrices from $M_\Gamma$ are called \emph{$\Gamma$-shaped}. Consider \emph{the subspace of isospectral $\Gamma$-shaped matrices}:
\begin{equation}\label{eqMGammaLambda}
M_{\Gamma,\lambda}=M_{\Gamma}\cap M_{\lambda}\subset M_n.
\end{equation}
\end{defin}

\begin{rem}\label{remSard}
According to Sard's theorem, the subset $M_{\Gamma,\lambda}$ is a smooth submanifold of the vector space $M_\Gamma\cong \Ro^{n+2|E_\Gamma|}$ for generic $\lambda$. Indeed, all regular values of the map $g|_{M_\Gamma}\colon M_\Gamma\to C$, see~\eqref{eqGmap}, determine smooth isospectral submanifolds $M_{\Gamma,\lambda}=(g|_{M_\Gamma})^{-1}(\lambda)$. Let $U$ be the set of regular values, it is open and dense. In the following, we always assume that $\lambda$ is simple and generic, i.e. lies in $U$, so that $M_{\Gamma,\lambda}$ is a smooth compact manifold. The precise description of $U$ depends on a graph $\Gamma$, and in some cases $U$ may be nontrivial, as shown in~\cite{AyzPeriodic} for periodic tridiagonal matrices. However, the precise description of the set of regular values $U$ is irrelevant to our current study.
\end{rem}

\begin{rem}\label{remOrientableParallel}
For generic $\lambda$, the manifold $M_{\Gamma,\lambda}$ is given by a nondegenerate system of $n$ equations in $M_\Gamma\cong\Ro^{n+2|E_\Gamma|}$. Therefore this manifold is normally parallelizable. In particular, $M_{\Gamma,\lambda}$ is orientable and its tangent characteristic classes vanish.
\end{rem}

The $T^n$-action on $M_n$ preserves all subsets $M_\Gamma$, $M_\lambda$, $M_{\Gamma,\lambda}$. In particular, we have a canonical smooth $T^n$-action on $M_{\Gamma,\lambda}$. Simple count of parameters implies
\begin{equation}\label{eqDimM}
\dim_{\Ro} M_{\Gamma,\lambda}=2|E_\Gamma|.
\end{equation}
%

We now review some basic facts from the theory of compact torus actions on manifolds. Let $R$ denote a coefficient ring (either $\Zo$ or a field). Assume that a torus $T=T^k$ acts on a smooth closed manifold $X$. Let $ET\to BT$ be the classifying principal $T$-bundle, and $X_T=X\times_TET$ be the Borel construction of $X$. We have a Serre fibration $p\colon X_T\stackrel{X}{\to}BT$. The cohomology ring $H^*_T(X;R)=H^*(X_T;R)$ is called \emph{the equivariant cohomology ring} of~$X$. Via the induced map $p^*\colon H^*(BT;R)\to H^*(X_T;R)$, the equivariant cohomology attain the natural structure of a graded module over $H^*(BT;R)\cong R[k]$, the polynomial ring in $k=\dim T$ generators of degree $2$. Since $BT$ is simply connected, the fibration $p$ induces Serre spectral sequence:
\begin{equation}\label{eqSerreSpSec}
E_2^{p,q}\cong H^p(BT;R)\otimes H^q(X;R)\Rightarrow H_T^{p+q}(X;R).
\end{equation}
The $T$-action on $X$ is called \emph{cohomologically equivariantly formal} (over $R$) if~\eqref{eqSerreSpSec} collapses at $E_2$. The following characterization of equivariant formality is known.

\begin{lem}\label{lemEquivFormFixedPoints}
Consider a smooth $T$-action on $X$, such that $X^T$ is finite and nonempty, and let $R$ be either $\Zo$ or a field. Then the following conditions are equivalent
\begin{enumerate}
  \item The $T$-action on $X$ is cohomologically equivariantly formal over $R$.
  \item $H^{\odd}(X;R)=0$.
  \item $H^*_T(X;R)$ is a free $H^*(BT;R)$-module.
\end{enumerate}
\end{lem}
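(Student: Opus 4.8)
The plan is to establish the cycle of implications $(2)\Rightarrow(1)\Rightarrow(3)\Rightarrow(2)$, using four standard inputs: the Serre spectral sequence~\eqref{eqSerreSpSec}, the Leray--Hirsch theorem, the Eilenberg--Moore spectral sequence of the fibration $X\hookrightarrow X_T\to BT$, and the Borel localization theorem for torus actions; the recurring elementary fact is that $H^*(BT;R)\cong R[k]$ is concentrated in even degrees. I would begin with $(2)\Rightarrow(1)$, which is a parity count: if $H^{\odd}(X;R)=0$, then $E_2^{p,q}=H^p(BT;R)\otimes_R H^q(X;R)$ can be nonzero only when $p$ and $q$ are both even, hence only in even total degree, while every differential $d_r\colon E_r^{p,q}\to E_r^{p+r,q-r+1}$ raises the total degree by one. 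Therefore one of the source and the target of $d_r$ always sits in odd total degree and vanishes, so all higher differentials are zero and the sequence collapses at $E_2$.

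For $(1)\Rightarrow(3)$ I would observe that collapse at $E_2$ makes the edge homomorphism $H^*_T(X;R)\to E_\infty^{0,*}=E_2^{0,*}=H^*(X;R)$ --- which is the restriction to the fibre $X\hookrightarrow X_T$ --- surjective. When $R$ is a field, lifting an $R$-basis of $H^*(X;R)$ to homogeneous classes in $H^*_T(X;R)$ and applying the Leray--Hirsch theorem shows that these classes form an $H^*(BT;R)$-module basis, so $H^*_T(X;R)$ is free. Over $R=\Zo$ the same argument applies once $H^*(X;\Zo)$ is known to be a free abelian group; establishing this torsion-freeness is the delicate point, and it is here that the finiteness of the fixed-point set must be used.

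The implication $(3)\Rightarrow(2)$ then goes as follows. If $H^*_T(X;R)$ is a free $H^*(BT;R)$-module it is in particular torsion-free over $H^*(BT;R)$, so the localization theorem --- which says the kernel of the restriction $\iota^*\colon H^*_T(X;R)\to H^*_T(X^T;R)$ is a torsion module --- forces $\iota^*$ to be injective. Since $X^T$ is finite, $H^*_T(X^T;R)\cong\bigoplus_{x\in X^T}H^*(BT;R)$ is concentrated in even degrees, hence so is $H^*_T(X;R)$, and a graded free $H^*(BT;R)$-module living in even degrees has a basis in even degrees. Freeness also degenerates the Eilenberg--Moore spectral sequence $\operatorname{Tor}_{H^*(BT;R)}\!\bigl(H^*_T(X;R),R\bigr)\Rightarrow H^*(X;R)$, so $H^*(X;R)\cong H^*_T(X;R)\otimes_{H^*(BT;R)}R$ is concentrated in even degrees; thus $H^{\odd}(X;R)=0$ and the cycle closes.

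The step I expect to be the main obstacle is the integral version of $(1)\Rightarrow(3)$ (and its counterpart inside $(3)\Rightarrow(2)$): over a field the four classical tools interlock immediately, whereas over $\Zo$ one must rule out torsion in $H^*(X;\Zo)$ --- equivalently in $H^*_T(X;\Zo)$ --- and this is exactly where the hypothesis that $X^T$ is finite and nonempty does the essential work.
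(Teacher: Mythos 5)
Your cycle $(2)\Rightarrow(1)\Rightarrow(3)\Rightarrow(2)$ is a valid schema, and the two arcs $(2)\Rightarrow(1)$ (parity in the Serre spectral sequence) and $(3)\Rightarrow(2)$ (freeness kills the $H^*(BT;R)$-torsion kernel of localization, then base-change to $R$) are argued correctly for both fields and $\Zo$. Over a field the Leray--Hirsch step $(1)\Rightarrow(3)$ also goes through, so the field case is in order. The paper itself does not reprove the lemma but cites Masuda--Panov, whose argument does handle the integral case.

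The genuine gap is exactly the one you flag but do not close: the integral implication $(1)\Rightarrow(3)$. Collapse of the integral Serre spectral sequence only tells you that the associated graded of $H^*_T(X;\Zo)$ is $H^*(BT;\Zo)\otimes H^*(X;\Zo)$; if $H^*(X;\Zo)$ has torsion this is not a free $H^*(BT;\Zo)$-module, and Leray--Hirsch is simply unavailable. Saying ``the finiteness of the fixed-point set does the essential work here'' is not an argument, and in fact the torsion-freeness of $H^*(X;\Zo)$ is most naturally a consequence of condition~$(2)$, not of~$(1)$: once $H^{\odd}(X;\Zo)=0$, Poincar\'e duality plus the universal-coefficient theorem on the closed even-dimensional manifold $X$ force $H^{\even}(X;\Zo)$ to be free, and \emph{then} Leray--Hirsch gives $(2)\Rightarrow(3)$ directly. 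Your chain routes through the wrong intermediate: $(1)\Rightarrow(3)$ without already having $(2)$ is the hard direction over $\Zo$. You would need either to supply a direct argument for $(1)\Rightarrow(2)$ (rank count over $\Qo$ disposes of the free part, but controlling the torsion still requires the closed-manifold/Poincar\'e-duality input, which you never invoke) and then run $(2)\Rightarrow(3)\Rightarrow(2)$, or reproduce the Masuda--Panov argument in full. As written, the proposal proves the statement over a field but leaves the $R=\Zo$ case, which is the one the paper actually needs, incomplete.
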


This lemma was proved in \cite[Lm.2.1]{MasPan} for $R=\Zo$ and the proof for fields follows the same lines. It is known that the Euler characteristic $\chi(X)$ equals the number $\#X^T$ of fixed points for torus actions with isolated fixed points (see e.g.~\cite[Ch.III]{Bred}). Therefore, if $R$ is a field (so the torsion can be neglected), the following conditions are equivalent:
\begin{enumerate}
  \item the $T$-action on $X$ is equivariantly formal;
  \item the total Betti number equals the number of fixed points
  \begin{equation}\label{eqTotalBettiNumberFormal}
  \dim H_*(X;R) = \chi(X) = \#X^T.
  \end{equation}
\end{enumerate}
If the action is not equivariantly formal, we have
\begin{equation}\label{eqIneqBetti}
\dim H_*(X) = \dim H_{\even}(X)+\dim H_{\odd}(X)=\chi(X)+2\dim H_{\odd}(X)>\#X^T.
\end{equation}

In~\cite{AyzStaircase} we proved that $M_{\Gamma(h),\lambda}$ admits Morse function with all critical points having even index, which implies that there is a Morse decomposition with even-dimensional cells. Therefore $H^{\odd}(M_{\Gamma(h),\lambda};R)$ for any ring $R$, so these manifolds are equivariantly formal. Therefore, diffeomorphism~\eqref{eqRelabelDiffeo} and Proposition~\ref{propIndiffCharact} imply

\begin{prop}
If $\Gamma$ is an indifference graph, then the canonical $T$-action on $M_{\Gamma,\lambda}$ is equivariantly formal.
\end{prop}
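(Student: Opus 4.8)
The plan is to reduce the assertion to the equivariant formality of staircase manifolds $M_{\Gamma(h),\lambda}$, which is already in hand via~\cite{AyzStaircase}. First, since $\Gamma$ is an indifference graph, Proposition~\ref{propMertzios} supplies a Hessenberg function $h$ together with a graph isomorphism realized by a permutation $\sigma\in\Sigma_n$ with $\sigma\Gamma=\Gamma(h)$. The relabelling diffeomorphism $\diff_\sigma\colon M_{\Gamma,\lambda}\to M_{\sigma\Gamma,\lambda}=M_{\Gamma(h),\lambda}$ from~\eqref{eqRelabelDiffeo}, given by conjugation with the permutation matrix $P_\sigma$, then identifies the two manifolds and hence induces an isomorphism $H^*(M_{\Gamma,\lambda};R)\cong H^*(M_{\Gamma(h),\lambda};R)$ for every coefficient ring $R$.

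Next I would invoke the fact recalled above from~\cite{AyzStaircase}: the staircase manifold $M_{\Gamma(h),\lambda}$ admits a Morse function all of whose critical points have even index, hence a CW decomposition with cells only in even dimensions, so that $H^{\odd}(M_{\Gamma(h),\lambda};R)=0$ for every $R$. Transporting this vanishing along the isomorphism of the previous paragraph gives $H^{\odd}(M_{\Gamma,\lambda};R)=0$.

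Finally, apply Lemma~\ref{lemEquivFormFixedPoints} to the canonical $T^n$-action on $M_{\Gamma,\lambda}$. Its fixed point set consists of the diagonal matrices $A_\sigma=\diag(\lambda_{\sigma(1)},\ldots,\lambda_{\sigma(n)})$, $\sigma\in\Sigma_n$, which is finite and nonempty, so the hypotheses of the lemma hold; the equivalence of conditions~(1) and~(2) then yields that the $T^n$-action on $M_{\Gamma,\lambda}$ is cohomologically equivariantly formal over~$R$. If one wants to argue $T$-equivariantly rather than just on the level of cohomology, one notes that $\diff_\sigma$ intertwines the $T^n$-action on $M_{\Gamma,\lambda}$ with the one on $M_{\Gamma(h),\lambda}$ up to the coordinate-permuting automorphism of $T^n$ induced by $\sigma$, which does not affect formality.

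There is essentially no obstacle here: the only point requiring a moment's care is that condition~(2) of Lemma~\ref{lemEquivFormFixedPoints} is a purely homotopy-invariant property of the underlying manifold, so it is transported along the diffeomorphism $\diff_\sigma$ without any need to track equivariance. The statement is therefore a direct consequence of Proposition~\ref{propMertzios}, the diffeomorphism~\eqref{eqRelabelDiffeo}, the evenness of the Morse index in~\cite{AyzStaircase}, and Lemma~\ref{lemEquivFormFixedPoints}.
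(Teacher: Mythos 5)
Your argument is correct and matches the paper's own reasoning: both reduce to the staircase case via Proposition~\ref{propMertzios} and the relabelling diffeomorphism~\eqref{eqRelabelDiffeo}, then invoke the even-index Morse function of~\cite{AyzStaircase} to kill odd cohomology and apply Lemma~\ref{lemEquivFormFixedPoints}. Your added remark, that condition~(2) of that lemma is homotopy-invariant so one need not track equivariance of the diffeomorphism, and your note that $\diff_\sigma$ intertwines the torus actions up to the coordinate permutation of $T^n$, are correct clarifications of a point the paper leaves implicit.
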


In this paper we prove the converse.

\begin{thm}\label{thmNotEquivFormal}
If $\Gamma$ is not an indifference graph, then $M_{\Gamma,\lambda}$ is not equivariantly formal over $\Zo$, $\Qo$, and $\Zt$.
\end{thm}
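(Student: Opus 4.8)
\emph{Plan of proof.}
The strategy is to reduce the statement, by Roberts' forbidden--subgraph theorem, to a short list of graphs, and then to verify non-formality for each graph on the list: two cases are known from earlier work, and the two remaining ones require a computer-assisted homology computation.

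\emph{Step 1: reduction to forbidden induced subgraphs.} By Roberts' theorem \cite{Roberts}, a graph fails to be an indifference graph precisely when it contains one of $\Cy_k$ ($k\geqslant4$), $\St_3$, $\Sunn$, or $\Net$ as an induced subgraph. I would first establish a monotonicity lemma: if $\Gamma'=\Gamma|_S$ is the subgraph induced on a subset $S\subseteq[n]$ and $M_{\Gamma',\mu}$ is not equivariantly formal over $R$ for a generic simple spectrum, then neither is $M_{\Gamma,\lambda}$ over $R$. For this, consider the one-dimensional subtorus $T'=\{\diag(t,\dots,t,1,\dots,1)\}\subset T^n$ carrying $t$ in the positions of $S$. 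By the coordinate formula~\eqref{eqActionExplicit}, the fixed submanifold $(M_{\Gamma,\lambda})^{T'}$ consists of the $\Gamma$-shaped isospectral matrices that are block-diagonal with respect to the partition $[n]=S\sqcup([n]\setminus S)$, and its connected components are the products $M_{\Gamma|_S,\mu}\times M_{\Gamma|_{[n]\setminus S},\lambda\setminus\mu}$ over the ways of splitting $\lambda$ into the part $\mu$ of the $S$-block and its complement; for $\lambda$ generic enough each factor is a smooth isospectral matrix manifold (Remark~\ref{remSard}) with nonempty fixed set. Equivariant formality passes to fixed submanifolds of subtori and, by Künneth, to and from products of actions with nonempty fixed sets; moreover the homological obstruction to formality of $M_{\Gamma',\mu}$ is insensitive to the choice of generic simple $\mu$ (below it is read off from a poset that depends only on $\Gamma'$). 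Hence non-formality of $M_{\Gamma|_S,\mu}$ forces non-formality of the corresponding component of $(M_{\Gamma,\lambda})^{T'}$, and therefore of $M_{\Gamma,\lambda}$. This reduces the theorem to the case where $\Gamma$ is itself one of the four forbidden graphs.

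\emph{Step 2: the known cases $\Cy_k$ ($k\geqslant4$) and $\St_3$.} Non-equivariant formality of $M_{\Cy_k,\lambda}$ and $M_{\St_3,\lambda}$ over $\Zo$, $\Qo$, $\Zt$ was proved in \cite{AyzStaircase} and \cite{AyzArrows}, and given a uniform proof in \cite{AyzBuchGraph}: the cluster-permutohedron $\Cl_\Gamma$, i.e. the face poset of the canonical $T^n$-action on $M_{\Gamma,\lambda}$, has nontrivial simplicial cohomology in degree~$1$, which by the acyclicity theorem for face posets of equivariantly formal actions \cite{AyzMasSolo} is incompatible with equivariant formality of $M_{\Gamma,\lambda}$. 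I would simply cite these results.

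\emph{Step 3: the new cases $\Sunn$ and $\Net$, and the coefficient rings.} By Lemma~\ref{lemEquivFormFixedPoints} it is equivalent to show that $H^{\odd}(M_{\Gamma,\lambda};R)\neq0$, i.e. that $\rk H_*(M_{\Gamma,\lambda};R)>6!$, for $\Gamma\in\{\Sunn,\Net\}$ and $R\in\{\Zo,\Qo,\Zt\}$. I would run the same argument as in Step~2: were $M_{\Gamma,\lambda}$ equivariantly formal, the acyclicity conditions of \cite{AyzMasSolo} would force the reduced cohomology of $\Cl_\Gamma$ to vanish in degree~$3$; but a direct computation of $H_*(\Cl_\Gamma;R)$ with the Sage script \cite{AyzCode} --- unavoidable since $\Cl_\Sunn$ and $\Cl_\Net$ are far too large for hand calculation --- exhibits a nonzero class in $H_3(\Cl_\Gamma;R)$ for each of the three rings, a contradiction. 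As an independent check I would also carry out the GKM-sheaf computation of Section~\ref{secSunAndNet}: compute a portion of the Atiyah--Bredon--Franz--Puppe sequence of the GKM-sheaf of $M_{\Gamma,\lambda}$ over the poset $\Cl_\Gamma$ sufficient to detect a failure of exactness, which again contradicts equivariant formality. Finally, non-formality over $\Qo$ yields it over $\Zo$ automatically (a nonzero rational odd cohomology class comes from a free summand of the integral one), so only $R=\Qo$ and $R=\Zt$ need to be treated separately, and the computation does this.

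\emph{Main obstacle.} The crux is Step~3: the cluster-permutohedra of $\Sunn$ and $\Net$ are large posets (their top cells correspond to the cells of torus actions with $720$ fixed points, together with all lower-dimensional faces), so the relevant third homology genuinely cannot be computed by hand and the proof is computer-assisted; one must also make sure the computation is valid over each of $\Zo$, $\Qo$, $\Zt$, and control the computational complexity of the ABFP verification enough to extract a usable contradiction. A secondary point requiring care is the passage of equivariant formality to fixed submanifolds of subtori over $\Zo$ (where even-degree torsion must be handled) and the identification of the components of $(M_{\Gamma,\lambda})^{T'}$ with products of smaller isospectral matrix manifolds.
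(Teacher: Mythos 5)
Your proposal is correct and follows essentially the same route as the paper. Two small points are worth flagging. First, in the reduction lemma you use a circle subgroup $T'=\{\diag(t,\dots,t,1,\dots,1)\}$, so that the components of the fixed set are products $M_{\Gamma|_S,\mu}\times M_{\Gamma|_{[n]\setminus S},\lambda\setminus\mu}$ and you need a K\"{u}nneth step; the paper instead fixes by the coordinate subtorus $H=T^{[n]\setminus B}$, which kills all off-diagonal entries outside the block $B$, so each component is literally $M_{\Gamma_B,\lambda_B}$ with no product factor to strip off --- a slightly cleaner reduction, though both work. Second, be careful with notation in Step~3: the full poset $\Cl_\Gamma$ has a greatest element, so $|\Cl_\Gamma|$ is a cone and is contractible; the nontrivial $H_3$ lives in the rank-selected skeleton $(\Cl_\Gamma)_4$, and that is what Proposition~\ref{propAcyclicityOfPoset} (with $j=2$ from girth~$3$) constrains. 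Finally, you assert the direct homology computation exhibits a nonzero class in degree~$3$ over all three rings for both $\Net$ and $\Sunn$; the paper reports this over $\Zt$ and $\Qo$ for $\Net$, but only over $\Zt$ for $\Sunn$ (whence also over $\Zo$), and settles the $\Qo$ case for $\Sunn$ by the independent ABFP/GKM computation (comparing GKM Betti numbers $\beta_2=5$, $\beta_4=29$ with the constraints from the ABFP sequence and Poincar\'{e} duality). Your plan already mentions the ABFP check as a backup, so the gap is one of emphasis rather than substance, but you should not take for granted that the $\Qo$ homology computation of $(\Cl_{\Sunn})_4$ is feasible or that its answer is the same as over $\Zt$.
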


The proof is based on the following sequence of statements. The first statement is a well-known result in the intersection graph theory, proved by Roberts~\cite{Roberts} (see also~\cite[Exer.3.12]{McMc}).

\begin{prop}[\cite{Roberts}]\label{propIndiffCharact}
A graph $\Gamma$ is an indifference graph if and only if $\Gamma$ does not contain induced subgraphs of the types shown on Fig.~\ref{figRoberts}: (1) the cycle graphs $\Cy_k$ with $k\geqslant 4$ vertices; (2) the claw graph, also known as $3$-star graph $\St_3$; (3) the net graph $\Net$; (4) the 3-sun graph $\Sunn$.
\end{prop}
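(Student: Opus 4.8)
The statement is Roberts' forbidden-subgraph characterization of indifference (unit interval) graphs, so I would prove it in two directions, the second carrying all the weight: \emph{necessity} (an indifference graph contains none of $\Cy_k$ with $k\geqslant 4$, $\St_3$, $\Net$, $\Sunn$ as induced subgraphs) and \emph{sufficiency} (a graph avoiding all four as induced subgraphs is an indifference graph).

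\emph{Necessity.} Being a unit interval graph is hereditary — the restriction of a family of unit intervals to any subset of vertices realizes the induced subgraph — so it suffices to check that the four families are not themselves indifference graphs. Three observations cover all cases. (i) Every indifference graph is an interval graph, hence chordal, hence has no induced cycle of length $\geqslant 4$; this kills all $\Cy_k$ with $k\geqslant 4$. (ii) Short arguments with the Helly property of intervals show that $\Net$ and $\Sunn$ are not even interval graphs: for $\Sunn$ the three outer vertices would force each pairwise intersection of the three central-triangle intervals to stick out of the third, which is impossible for three pairwise intersecting intervals on a line; for $\Net$ the three pendant intervals each avoid two triangle intervals sharing a common point, so two of them lie on the same side of that point, and then the triangle interval attached to the farther one stretches past and collides with the nearer one. (iii) For $\St_3$, which \emph{is} an interval graph, use a unit-specific estimate: in any putative unit model the three leaves lie in the length-$2$ window centered at the image of the central vertex, so two of them lie in the same half of that window and hence at distance $\leqslant 1$, contradicting that the leaves are pairwise non-adjacent.

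\emph{Sufficiency.} Assume $\Gamma$ has no induced $\Cy_k$ ($k\geqslant 4$), $\St_3$, $\Net$, or $\Sunn$. Absence of induced cycles of length $\geqslant 4$ makes $\Gamma$ chordal. I would then show $\Gamma$ is an interval graph via Lekkerkerker--Boland: a chordal graph is an interval graph iff it has no asteroidal triple, and a chordal graph containing an asteroidal triple contains, as an induced subgraph, one of the chordal Lekkerkerker--Boland obstructions, namely $\Net$, $\Sunn$, or one of the ``spider'' obstructions organized around a branch vertex of degree $\geqslant 3$; the latter contain an induced $\St_3$. All of these are excluded, so $\Gamma$ has no asteroidal triple and is an interval graph. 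Finally I would establish the second half of Roberts' theorem — an interval graph with no induced $\St_3$ is a unit interval graph — by a direct argument: take a clique path of $\Gamma$ (a linear order on its maximal cliques so that for each vertex the cliques containing it form a contiguous block), order the vertices lexicographically by the pair (index of first clique, index of last clique), and use $\St_3$-freeness to verify that this order satisfies the umbrella condition $u<v<w$, $uw\in E_\Gamma \Rightarrow uv,vw\in E_\Gamma$, i.e.\ that it realizes a Hessenberg function in the sense of the introduction; an affine rescaling turns this combinatorial order into an honest placement of points at unit-interval distances, so $\Gamma$ is an indifference graph.

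The main obstacle is the sufficiency direction, and within it two steps that carry genuine combinatorial content rather than a one-line pigeonhole: first, certifying that the only claw-free, chordal, non-interval graphs are $\Net$ and $\Sunn$ — this is the restriction of the Lekkerkerker--Boland list to its claw-free members and must be done carefully (or replaced by a hands-on asteroidal-triple analysis); second, the passage from ``interval and claw-free'' to ``unit interval'', which really does require reordering the clique path using claw-freeness. It is worth noting that the most naive induction — delete a simplicial vertex $v$, build an indifference order of $\Gamma - v$, reinsert $v$ — does not work as stated, since $N(v)$, though a clique, need not be an interval of the indifference order of $\Gamma - v$; one must instead remove a suitably extremal vertex, argue via a double-sweep lexicographic breadth-first search, or simply cite Roberts, as the paper does.
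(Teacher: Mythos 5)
The paper does not prove this proposition at all: it is attributed directly to Roberts and, in the surrounding text, the authors simply cite \cite{Roberts} together with the textbook exercise \cite[Exer.~3.12]{McMc}. So there is no "paper proof" to compare against — the result is used as an imported black box. Your proposal is therefore a genuinely different contribution in kind: a sketch of an actual proof. The route you take is the standard one and is sound in outline: necessity via heredity plus Helly-type arguments on a line and a pigeonhole for the claw (your three-points-in-a-length-2-window argument is watertight, since three pairwise-distance-$>1$ points would span length $>2$); sufficiency via chordality $\Rightarrow$ Lekkerkerker--Boland $\Rightarrow$ interval, and then the Roberts/Wegner step that a claw-free interval graph is a unit interval graph. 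Your closing caveat — that the naive ``delete a simplicial vertex'' induction fails because $N(v)$ need not be contiguous in the inherited order — is exactly the pitfall that makes the paper's decision to cite rather than prove reasonable.

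The two points you flag as carrying the real weight are the right ones, and I would add only that you should state the Lekkerkerker--Boland step in the correct form: not ``the only claw-free chordal non-interval graphs are $\Net$ and $\Sunn$'' but ``the only \emph{minimal} such graphs (equivalently, every claw-free chordal non-interval graph contains an induced $\Net$ or $\Sunn$).'' Verifying that the claw-free members of the LB list are precisely $\Net$ and $\Sunn$ is a finite but nontrivial check. Likewise the clique-path $\to$ umbrella-ordering $\to$ unit-distance-model step needs the claw-freeness invoked at a precise point (to rule out a vertex's maximal-clique interval being strictly nested inside another non-adjacent vertex's interval), which your sketch gestures at but does not pin down. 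Neither of these is a wrong step — they are just the two places where a blind reconstruction would have to do real work, which is presumably why the paper, whose focus is the topological application, outsources the whole proposition to Roberts.
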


\begin{figure}[h]
\begin{center}
\includegraphics[scale=0.35]{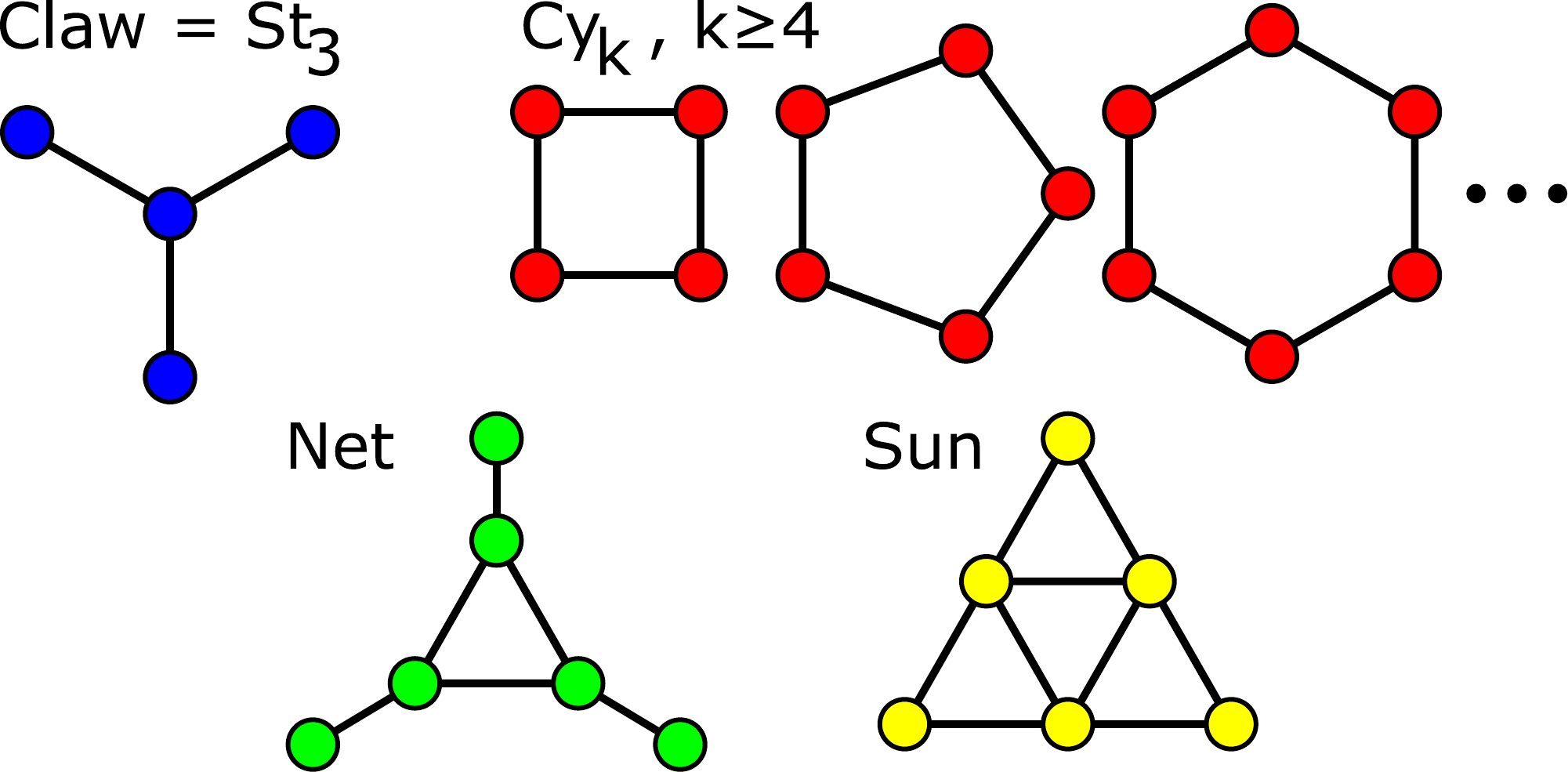}
\end{center}
\caption{Forbidden subgraphs for the class of indifference graphs}\label{figRoberts}
\end{figure}

We recall that the induced subgraph of $\Gamma$ on a vertex subset $B\subset[n]$ is the subgraph $\Gamma_B$ which contains all edges of $\Gamma$ incident to vertices from $B$. For example, the complete graph $K_4$ contains the claw $\St_3$ as a subgraph, but not as an induced subgraph. It follows that whenever $\Gamma$ is not an indifference graph, it contains one of the graphs: $\Cy_k$ ($k\geqslant 4$), $\St_3$, $\Net$, $\Sunn$ as an induced subgraph. The next 3 lemmata are proved in Section~\ref{secKnownToricCySt}. 

\begin{lem}\label{lemInducedGeneral}
Assume that $\lambda$ is generic and $M_{\Gamma,\lambda}$ is equivariantly formal. Let $\Gamma_B$ be the induced subgraph of $\Gamma$ on a vertex subset $B\subset[n]$ and $\lambda_B$ be any subset of $\{\lambda_1,\ldots,\lambda_n\}$ of cardinality $|B|$. Then $M_{\Gamma_B,\lambda_B}$ is also equivariantly formal.
\end{lem}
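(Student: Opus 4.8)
The plan is to realize $M_{\Gamma_B,\lambda_B}$ as a component of the $T^n$-fixed submanifold of a larger isospectral matrix manifold attached to $\Gamma$, and then invoke the classical fact that equivariant formality passes to fixed-point set components. First I would handle the combinatorics of eigenvalue multiplicities: although $\lambda$ is simple, the subset $\lambda_B$ need not be a spectrum we may directly plug into $\Gamma_B$ with a generic parameter, so the first step is to pass from the simple spectrum $\lambda = \{\lambda_1 < \cdots < \lambda_n\}$ on $\Gamma$ to a spectrum with exactly the multiplicity pattern corresponding to the vertex subset $B$. Concretely, I would consider the one-parameter family obtained by letting the eigenvalues indexed by $[n]\setminus B$ collide pairwise with (or be swept toward) a chosen block, using the fibration $g\colon M_\Gamma\setminus\Sigma\to C$ from~\eqref{eqGmap} and the fact that the isospectral manifolds over a path of regular values are diffeomorphic. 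The technical content here is that, even though the limiting spectrum lies on a wall of the Weyl chamber, one can choose the degeneration so that the relevant stratum of the (possibly singular) limit contains a smooth component $T^n$-equivariantly diffeomorphic to $M_{\Gamma_B,\lambda_B}$ up to a torus-equivariant factor that does not affect formality.

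A cleaner route, which I expect to be the one to pursue, avoids degenerating the spectrum and instead uses a restriction-of-subtorus argument combined with Lemma~\ref{lemEquivFormFixedPoints}. Pick a subtorus $T'\subseteq T^n$ acting on $M_{\Gamma,\lambda}$ whose fixed submanifold $(M_{\Gamma,\lambda})^{T'}$ has a connected component equivariantly diffeomorphic to a product $M_{\Gamma_B,\lambda_B}\times N$, where $N$ is an isospectral manifold (possibly a product of such) for the vertices outside $B$ carrying a torus action; the choice of $T'$ is dictated by which off-diagonal entries we wish to force to zero, namely those entries $a_{ij}$ with exactly one of $i,j$ in $B$. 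Since $T'$ acts on $a_{ij}$ with weight $t_i t_j^{-1}$, see~\eqref{eqActionExplicit}, a generic circle inside the appropriate coordinate subtorus kills precisely these mixed entries at its fixed set, leaving a block-diagonal matrix, one block being a $\Gamma_B$-shaped Hermitian matrix. Then I would invoke the standard principle (e.g. via the Borel localization / the fact that restriction $H^*_T(X)\to H^*_T(X^T)$ is injective for formal actions, together with formality being inherited by $H^{\odd}$-vanishing) that if $X$ is equivariantly formal over $R$ then so is each component of $X^{T'}$: indeed $H^{\odd}(X;R)=0$ and $H^{\odd}\bigl((X^{T'})_c;R\bigr)=0$ follows because the Euler characteristic and Poincaré polynomial of the fixed set are controlled by the global action — here one uses the $R$-coefficient version of the Atiyah–Bott / Borel formula that the sum of Betti numbers of fixed components cannot exceed that of $X$, which is exactly the $H^{\odd}=0$ criterion of Lemma~\ref{lemEquivFormFixedPoints}(2).

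Finally I would deal with the spectrum book-keeping: $\lambda_B$ is an arbitrary $|B|$-subset of $\{\lambda_1,\dots,\lambda_n\}$, whereas the fixed-point description above naturally produces $M_{\Gamma_B,\mu}$ for the specific eigenvalues $\mu$ assigned to the chosen block; since for generic simple spectra all the $M_{\Gamma_B,\mu}$ are mutually diffeomorphic (they are all fibers of $g|_{M_{\Gamma_B}}$ over regular values, and the set of regular values is connected after removing the discriminant in the relevant low dimension, or one argues component-by-component), we may replace $\mu$ by $\lambda_B$ without loss. The same remark lets us pass from a not-necessarily-generic $\mu$ to a nearby generic one by the local triviality of $g$. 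The main obstacle I anticipate is not the topology of fixed sets — that is classical — but making the identification of the fixed component with $M_{\Gamma_B,\lambda_B}\times N$ precise and $T$-equivariant, in particular checking that the "outside" block $N$ is itself a nonempty manifold with isolated-fixed-point torus action so that Lemma~\ref{lemEquivFormFixedPoints} applies to the product; this is where one must use connectedness assumptions on $\Gamma$ and the genericity of $\lambda$ carefully, and possibly iterate the construction one vertex at a time rather than splitting off all of $[n]\setminus B$ at once.
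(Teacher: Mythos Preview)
Your second route is essentially the paper's argument, but you are making it harder than necessary. The paper does not take a circle or a generic one-parameter subgroup; it takes the full coordinate subtorus $H=T^{[n]\setminus B}\subset T^n$. From~\eqref{eqActionExplicit}, a matrix is $H$-fixed if and only if every off-diagonal entry $a_{ij}$ with $i$ or $j$ in $[n]\setminus B$ vanishes, so the fixed set consists of block-diagonal matrices with one $|B|\times|B|$ block and all remaining blocks of size $1\times 1$. Hence each connected component of $M_{\Gamma,\lambda}^{H}$ is already \emph{exactly} $M_{\Gamma_B,\lambda_B}$ for some $|B|$-subset $\lambda_B\subset\lambda$: there is no extra factor $N$, no K\"unneth step, and no need to iterate vertex-by-vertex. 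Then one simply invokes Lemma~\ref{lemInvarIsFormal} (equivariant formality passes to components of fixed submanifolds), which you were in effect re-deriving.

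Two further simplifications follow from this choice. First, every $|B|$-subset $\lambda_B$ of $\lambda$ actually occurs as one of the components (the complementary eigenvalues sit on the diagonal $1\times 1$ blocks), so your ``spectrum book-keeping'' paragraph is unnecessary: there is nothing to deform and no appeal to local triviality of $g$. Second, the first route you sketched---degenerating eigenvalues toward a wall of the Weyl chamber---is both unneeded and delicate, since it leaves the regime where $M_{\Gamma,\lambda}$ is known to be smooth; you were right to abandon it.
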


This lemma shows that to prove Theorem~\ref{thmNotEquivFormal}, it is sufficient to prove non-formality of the matrix manifolds corresponding to the forbidden subgraphs in Fig.~\ref{figRoberts}. 

\begin{lem}\label{lemCycleGraphs}
For generic $\lambda$ and $k\geqslant 4$, the space $M_{\Cy_k,\lambda}$ is not equivariantly formal over any $R$.
\end{lem}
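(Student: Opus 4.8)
\emph{Proof plan.} Here $M_{\Cy_k,\lambda}$ is the manifold of isospectral \emph{periodic tridiagonal} Hermitian matrices: those $A$ whose entries vanish except for $a_{ii}$ and $a_{i,i+1}$, $a_{i+1,i}$ with indices read modulo $k$. It is closed, orientable by Remark~\ref{remOrientableParallel}, of dimension $\dim_\Ro M_{\Cy_k,\lambda}=2|E_{\Cy_k}|=2k$, and carries the effective action of $T^k/\Delta T^1\cong T^{k-1}$ with exactly $k!$ isolated fixed points, the diagonal matrices $A_\sigma$. By Lemma~\ref{lemEquivFormFixedPoints} and~\eqref{eqIneqBetti} it suffices to produce a nonzero class in $H^{\odd}(M_{\Cy_k,\lambda};R)$, and I would locate one on the orbit poset, where it is manifestly independent of the coefficient ring.

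The observation that singles out the cycle is the $T^k$-invariant \emph{cyclic off-diagonal product} $P\colon M_{\Cy_k,\lambda}\to\Co$, $P(A)=a_{12}a_{23}\cdots a_{k-1,k}\,a_{k1}$. In the coordinate form~\eqref{eqActionExplicit} the factor $a_{i,i+1}$ is rescaled by $t_it_{i+1}^{-1}$, so $P$ is fixed and descends to the orbit space $Q=M_{\Cy_k,\lambda}/T^{k-1}$. On the open dense set $X^\circ=\{P\neq0\}$, where all $k$ supradiagonal entries are nonzero, the isotropy of every point in $T^k$ is reduced to the scalar circle $\Delta T^1$, so $T^{k-1}$ acts freely there and $\arg P$ is a well-defined $S^1$-valued function on $X^\circ/T^{k-1}$. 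The complement $\{P=0\}=\bigcup_{e\in E_{\Cy_k}}\{a_e=0\}$ is a union of $k$ codimension-two submanifolds, each equal to $M_{\Cy_k\setminus e,\lambda}$ — the isospectral manifold of a path graph, a smooth permutohedral toric variety with cohomology concentrated in even degrees.

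I would then run the uniform argument that~\cite{AyzBuchGraph} applies to $\Cy_k$ and $\St_3$ and pass from the manifold to its orbit poset. The face poset of the $T^{k-1}$-action on $M_{\Cy_k,\lambda}$ is the cluster-permutohedron $\Cl_{\Cy_k}$, the finite-topological core of the graphicahedron~\cite{Graphicahedron} of the $k$-cycle. By~\cite{AyzMasSolo}, the face poset of an equivariantly formal torus action is acyclic in the relevant range; in particular equivariant formality of $M_{\Cy_k,\lambda}$ over $R$ would force $H^1(\Cl_{\Cy_k};R)=0$. So everything reduces to showing $H^1(\Cl_{\Cy_k};R)\neq0$ for every $R$, and this is precisely the combinatorial shadow of $\arg P$: gluing $\Cl_{\Cy_k}$ from the acyclic cells contributed by the walls $\{a_e=0\}$ around the $k$-gon leaves a one-dimensional hole, detected by the pull-back of the generator of $H^1(S^1;\Zo)$ along $\arg P$, which gives the required contradiction over every coefficient ring at once. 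The same class can also be exhibited on the manifold directly, via the winding of $\arg P$ on $X^\circ$ and the Serre spectral sequence of the free $T^{k-1}$-action, but the orbit-poset route is cleaner and is the template that generalizes.

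The main obstacle is to make the computation \emph{uniform in $k\geqslant4$} rather than a finite case check: one has to describe the face structure of $\Cl_{\Cy_k}$ — equivalently the pattern of incidences among the walls $\{a_e=0\}$ and their intersections in $Q$ — precisely enough to evaluate $H^1(\Cl_{\Cy_k})$ for all $k$ simultaneously. I expect the cluster-permutohedron approach to be optimal: it compresses the whole difficulty into the single combinatorial assertion $H^1(\Cl_{\Cy_k})\neq0$, and it is exactly the method that afterwards handles $\St_3$ and, after a third-homology computation, the graphs $\Sunn$ and $\Net$.
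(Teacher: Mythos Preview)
Your plan is sound and aligns with the paper's \emph{second} argument for $\Cy_k$ (the graphicahedron/cluster-permutohedron route in the subsection ``Obstructions to equivariant formality in the topology of graphicahedra''), where one shows $H_1(|(\Cl_{\Cy_k})_2|)\neq 0$ and invokes Proposition~\ref{propAcyclicityOfPoset}. Your geometric avatar of the obstruction --- the $T$-invariant product $P(A)=\prod a_{i,i+1}$ and the winding of $\arg P$ --- is exactly the mechanism behind the result you would ultimately cite: in~\cite{AyzPeriodic} this invariant is what drives the computation $\pi_1(M_{\Cy_k,\lambda})\cong\Zo^{k-3}$. The paper's \emph{primary} proof of Lemma~\ref{lemCycleGraphs} simply quotes that fundamental-group computation and reads off $H_1(M_{\Cy_k,\lambda};R)\neq 0$ for $k\geqslant 4$, which is shorter than going through the face poset; your route has the advantage of being the template that recurs for $\St_3$, $\Net$, $\Sunn$.

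One point to tighten: you write ``equivariant formality would force $H^1(\Cl_{\Cy_k};R)=0$'' and then aim to show $H^1(\Cl_{\Cy_k};R)\neq 0$, but $\Cl_{\Cy_k}$ has a greatest element and $|\Cl_{\Cy_k}|$ is contractible, so its $H^1$ is always zero. Proposition~\ref{propAcyclicityOfPoset} is a statement about the \emph{skeleta} $S(X)_r=(\Cl_{\Cy_k})_r$; the nontrivial class you want lives in $H_1(|(\Cl_{\Cy_k})_2|)$ (the paper gets it from the fact that $|(\Gr_{\Cy_k})_{k-1}|\cong T^{k-1}$ and the homotopy equivalence $|(\Gr_{\Cy_k})_2|\simeq|(\Cl_{\Cy_k})_2|$). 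Make that skeleton explicit, and your plan goes through.
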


\begin{lem}\label{lemStarGraph}
For generic $\lambda$, the space $M_{\St_3,\lambda}$ is not equivariantly formal over any $R$.
\end{lem}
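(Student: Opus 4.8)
The plan is to prove the sharper estimate $\dim H_*(M_{\St_3,\lambda};\Qo) > 24 = \#(M_{\St_3,\lambda})^{T}$. By Lemma~\ref{lemEquivFormFixedPoints} and \eqref{eqIneqBetti} this is the same as $H^{\odd}(M_{\St_3,\lambda};\Qo)\neq 0$, and a nonzero rational Betti number in an odd degree forces, via the universal coefficient theorem, $H^{\odd}(M_{\St_3,\lambda};R)\neq 0$ for every coefficient ring $R$, which rules out equivariant formality over any $R$. So everything reduces to producing a single nonzero odd rational cohomology class.

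Here $M_{\St_3,\lambda}$ is the manifold of isospectral $4\times 4$ Hermitian $\St_3$-shaped (``arrow'', or bordered-diagonal) matrices: a closed $6$-manifold by \eqref{eqDimM}, carrying the canonical $T^4$-action (effective part a $3$-torus) with $24$ isolated fixed points $A_\sigma$, $\sigma\in\Sigma_4$. Rather than computing $H^*(M_{\St_3,\lambda})$ head-on, I would work with the combinatorics of the orbit space. The $T$-action has a face poset, the cluster-permutohedron $\Cl_{\St_3}$ — the poset of orbit-type strata, equivalently the core of the graphicahedron of $\St_3$ \cite{Graphicahedron, AyzBuchGraph} — and for the claw this poset admits an explicit finite description. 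Two facts are then combined: first, the theorem of \cite{AyzMasSolo}, that the face poset of an equivariantly formal torus action with isolated fixed points is acyclic in a range of low degrees, in particular $\widetilde{H}^1(\Cl_\Gamma;R)=0$; second, the computation of \cite{AyzBuchGraph}, that $\widetilde{H}^1(\Cl_{\St_3};R)\neq 0$ (the same ``degree $1$'' phenomenon that handles the cycle graphs $\Cy_k$, $k\geq 4$). These two statements are contradictory, so $M_{\St_3,\lambda}$ cannot be equivariantly formal over $\Qo$, hence not over any $R$; this also recovers the original result of \cite{AyzArrows}.

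The step I expect to carry the weight of the argument — and the one that for $\Sunn$ and $\Net$ later forces the computer-assisted arguments — is the nonvanishing of $\widetilde{H}^1(\Cl_{\St_3})$: one must fix a precise combinatorial model of the cluster-permutohedron of the claw together with the exact acyclicity range in \cite{AyzMasSolo}, and then run a finite but not entirely transparent simplicial cochain computation on the order complex of $\Cl_{\St_3}$. For the claw this is feasible by hand; the degree-$3$ analogue needed for $\Sunn$ and $\Net$ is not, which is why the subsequent sections pass to Sage and to the ABFP/GKM-sheaf reformulation. If one additionally wants the $\Zt$-statement on its own rather than through the universal-coefficient remark of the first paragraph, the same cochain computation rerun over $\Zt$ supplies it.
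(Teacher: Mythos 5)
Your argument is correct, but it is not the paper's proof of Lemma~\ref{lemStarGraph}: it is the paper's \emph{secondary} route, presented in Section~\ref{secKnownToricCySt} as an independent confirmation. The official proof is a one-line citation of \cite{AyzArrows}, which computed the entire Betti vector $(\beta_0,\ldots,\beta_6)=(1,1,12,0,12,1,1)$ over any $R$ by identifying the orbit space $M_{\St_3,\lambda}/T^3$ with $D^2\times S^1$; the nonvanishing $\beta_1$ then kills equivariant formality directly via Lemma~\ref{lemEquivFormFixedPoints}. What you outline — combining the acyclicity theorem of \cite{AyzMasSolo} (Proposition~\ref{propAcyclicityOfPoset}) with the non-acyclicity of $\Cl_{\St_3}$ — is exactly the graphicahedron obstruction the paper later describes, and it has the advantage of being the same mechanism that the paper must use for $\Net$ and $\Sunn$. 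So the two routes are genuinely different in flavor: the citation route rests on an explicit orbit-space computation, while yours rests on the combinatorics of the face poset, which generalizes better.

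Two corrections to tighten your write-up. First, the acyclicity criterion does not say $\Hr^1(\Cl_\Gamma;R)=0$: the full poset $\Cl_\Gamma$ has a greatest element, so $|\Cl_\Gamma|$ is a cone and is always contractible. The statement and the obstruction both live on the rank-selected skeleton — equivariant formality forces $\Hr_1(|(\Cl_{\St_3})_2|;R)=0$, and it is this that fails. Second, you do not need a hand cochain computation to see that failure: because $\St_3$ is a tree, $\Cl_{\St_3}\cong\Gr_{\St_3}$ (Remark~\ref{remTreeClusterGraphic}), and \cite{SymGraph} identifies $\Gr_{\St_3}$ with the toroidal regular map $\{6,3\}_{(2,2)}$, so $|(\Cl_{\St_3})_2|\cong T^2$ and $H_1\neq 0$ is immediate. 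Your universal-coefficient remark for upgrading non-formality over $\Qo$ to non-formality over arbitrary $R$ is fine, since a nonzero rational odd Betti number exhibits a free $\Zo$-summand in odd-degree integral homology.
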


The proof of non-equivariant formality of $M_{\Gamma,\lambda}$ for the graphs $\Net$ and $\Sunn$ can be done similarly to $\Cy_k$ and $\St_3$. However, in these cases, the proof is much harder computationally, so we outline another approach to check ourselves. 
The next two lemmata are proved in Section~\ref{secSunAndNet}.

\begin{lem}\label{lemNetGraph}
For generic $\lambda$, the space $M_{\Net,\lambda}$ is not equivariantly formal over $\Zo$, $\Qo$, and $\Zt$.
\end{lem}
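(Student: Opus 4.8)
The plan is to reduce equivariant formality of $M_{\Net,\lambda}$ to a finite combinatorial homology computation and then settle that computation with the computer-algebra script~\cite{AyzCode}. The graph $\Net$ has six vertices, so $M_{\Net,\lambda}$ is a closed manifold of dimension $2|E_{\Net}|=12$ carrying the canonical $T^{6}$-action whose fixed set is the discrete set of $6!=720$ diagonal matrices; in particular the fixed points are isolated and nonempty. By Lemma~\ref{lemEquivFormFixedPoints} the action is equivariantly formal over a ring $R$ if and only if $H^{\odd}(M_{\Net,\lambda};R)=0$; over $\Qo$ this is in turn equivalent to the numerical equality $\dim H_*(M_{\Net,\lambda};\Qo)=720$, cf.~\eqref{eqTotalBettiNumberFormal}--\eqref{eqIneqBetti}. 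So it suffices to detect nonvanishing odd cohomology over each of $\Zo$, $\Qo$, $\Zt$.

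The second step is to replace the manifold by the face poset of the action, the cluster-permutohedron $\Cl_{\Net}$ (equivalently, the finite-topology core of the graphicahedron of $\Net$). This poset is built purely from the combinatorics of $\Net$, hence is independent of the particular generic $\lambda$. The structural input is the acyclicity criterion of~\cite{AyzMasSolo}: if the $T$-action on $M_{\Net,\lambda}$ is equivariantly formal over $R$, then the simplicial cohomology of $\Cl_{\Net}$ with coefficients in $R$ vanishes in the relevant low degrees. Following the pattern of~\cite{AyzBuchGraph}, where $\Cy_k$ ($k\geqslant 4$) and $\St_3$ were handled by producing a nonzero class in $H^{1}$ of the cluster-permutohedron, it is enough to exhibit a single nonzero cohomology class of $\Cl_{\Net}$ in a degree forbidden by the criterion. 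The essential difference from the cycle and the claw is that for $\Net$ the relevant obstruction is no longer visible in degree $1$: one has to climb up to $H^{3}$.

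Concretely, I would generate $\Cl_{\Net}$ explicitly as a finite poset (or as its order complex), feed it to the script of~\cite{AyzCode}, and compute $H_*(\Cl_{\Net};\Zo)$ by integral linear algebra. The expected outcome is that $H_{3}(\Cl_{\Net};\Zo)$ is nonzero and, moreover, contains a free summand; by the universal coefficient theorem the corresponding degree-$3$ cohomology class then survives over $\Qo$ and over $\Zt$, so the acyclicity condition fails over all three rings and $M_{\Net,\lambda}$ is not equivariantly formal over $\Zo$, $\Qo$, or $\Zt$. As an independent confirmation I would rerun the verification along the alternative, GKM-theoretic route sketched in the introduction: build the GKM-sheaf on the equivariant one-skeleton and its extension to $\Cl_{\Net}$, and exhibit a failure of exactness of the Atiyah--Bredon--Franz--Puppe sequence at one localized spot, which by~\cite{FP} likewise contradicts equivariant formality.

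The main obstacle is computational rather than conceptual. The cluster-permutohedron of $\Net$ — and, still more, the full ABFP complex — is far too large to analyze by hand, so the argument is unavoidably computer-assisted; care is needed (i) to construct the poset $\Cl_{\Net}$ correctly from the face structure of the $T$-action, (ii) to perform the homology computation over $\Zo$ rather than merely over a field, so that one run covers $\Qo$ and $\Zt$ simultaneously, and (iii) to locate in advance the degree — here $3$ — in which the acyclicity condition, or respectively the exactness of the ABFP sequence, actually fails, since unlike in the cycle and claw cases the lowest nonvanishing cohomology of the cluster-permutohedron is no longer in degree $1$.
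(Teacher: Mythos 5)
Your overall strategy is right and matches the first of the two routes the paper presents, but the specific computation you describe would return zero. The cluster-permutohedron $\Cl_{\Net}$ has a greatest element (the whole manifold $M_{\Net,\lambda}$), so its order complex is a cone and hence contractible; $H_*(\Cl_{\Net};\Zo)$ vanishes in all positive degrees and cannot detect non-formality. The acyclicity criterion of Proposition~\ref{propAcyclicityOfPoset} constrains instead the rank-selected skeleta $(\Cl_{\Net})_r=\{s\in\Cl_{\Net}\mid \rk s\leqslant r\}$. The same subtlety already appears in the $\St_3$ and $\Cy_k$ cases you cite as models: it is $(\Cl_{\St_3})_2$ and $(\Cl_{\Cy_k})_2$, not the whole posets, that carry the nonzero $H_1$. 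For $\Net$, whose girth is $3$, the action on $M_{\Net,\lambda}$ is $2$-independent by Proposition~\ref{propGirthAndGraphicahedron}, so formality would force $|(\Cl_{\Net})_4|$ to be $3$-acyclic. The paper's Sage run finds $\beta_3(|(\Cl_{\Net})_4|)=5$ over $\Qo$ and over $\Zt$; by universal coefficients this also rules out $3$-acyclicity over $\Zo$, which gives the lemma.

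Two smaller notes on alignment with the paper. First, you invert its emphasis: the cluster-permutohedron computation appears there as a preliminary ``already proves the lemma'' remark, while the proof actually labelled as the proof of Lemma~\ref{lemNetGraph} is the GKM route you sketch only as secondary confirmation (assume formality, compute $\beta_0,\ldots,\beta_6$ from the GKM graph by Algorithm~\ref{algorGKM}, apply Poincar\'e duality, and get $\sum_i\beta_i=730\neq 720=\chi(M_{\Net,\lambda})$, contradicting~\eqref{eqTotalBettiNumberFormal}). Second, your proposed ``failure of ABFP exactness at a localized spot'' is not what the paper does for $\Net$; that variant is reserved for $\Sunn$, where a direct GKM Betti-number count up to the needed degree was computationally out of reach.
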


\begin{lem}\label{lemSunGraph}
For generic $\lambda$, the space $M_{\Sunn,\lambda}$ is not equivariantly formal over $\Zo$, $\Qo$, and $\Zt$.
\end{lem}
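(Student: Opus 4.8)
The plan is to run the argument already used for Lemmata~\ref{lemCycleGraphs} and~\ref{lemStarGraph}: reduce equivariant formality of $M_{\Sunn,\lambda}$ to an acyclicity property of its cluster-permutohedron, and then settle that property by a homology computation --- which this time cannot be done by hand.

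First I would recall from Section~\ref{secKnownToricCySt} the combinatorial picture. The manifold $M_{\Sunn,\lambda}$ has $\dim_\Ro M_{\Sunn,\lambda}=2|E_\Sunn|=18$ and carries the canonical $T^6$-action with $6!=720$ isolated fixed points; for any generic $\lambda$ its face poset is the same cluster-permutohedron $\Cl_\Sunn$. By Lemma~\ref{lemEquivFormFixedPoints} together with~\eqref{eqIneqBetti}, equivariant formality over $R$ is equivalent to $\dim H_*(M_{\Sunn,\lambda};R)=720$, and by the result of~\cite{AyzMasSolo}, applied to $\Cl_\Sunn$ exactly as in~\cite{AyzBuchGraph}, it forces an acyclicity condition on the face poset: if $M_{\Sunn,\lambda}$ is equivariantly formal over $R$, then (the order complex of) $\Cl_\Sunn$ is $R$-acyclic, in particular $\widetilde{H}_j(\Cl_\Sunn;R)=0$ for every $j$. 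For $\Cy_k$ and $\St_3$ the obstruction already appears in degree $1$; but $\Cl_\Net$ and $\Cl_\Sunn$ turn out to be acyclic in degrees $1$ and $2$, which is why one is forced up to degree $3$.

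The core step is then to assemble $\Cl_\Sunn$ explicitly as a regular CW (equivalently, simplicial) complex from the description of faces of $M_{\Sunn,\lambda}$ recalled in Section~\ref{secKnownToricCySt}, and to compute $H_3(\Cl_\Sunn;R)$ for $R=\Zo,\Qo,\Zt$, exhibiting a class that survives over each of the three rings. Since $\Sunn$ has $6$ vertices and $9$ edges, $\Cl_\Sunn$ is far too large to analyze by hand, so this is carried out by the Sage script~\cite{AyzCode}: I would report the ranks of the relevant boundary maps, the Betti number $b_3$ and the torsion of $H_3(\Cl_\Sunn;\Zo)$, and check that a nonzero $3$-cycle survives rationally, integrally, and mod $2$. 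Since this contradicts the $R$-acyclicity of $\Cl_\Sunn$ predicted by formality, $M_{\Sunn,\lambda}$ is not equivariantly formal over $\Zo$, $\Qo$, or $\Zt$; the graph $\Net$ (Lemma~\ref{lemNetGraph}) is treated in exactly the same way.

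The hard part is entirely computational: producing a correct combinatorial model of $\Cl_\Sunn$, and pushing the chain-complex computation through, since the cell count is large and a naive simplicial homology routine is too slow --- so some preprocessing (working with the regular CW structure of the cluster-permutohedron rather than its full order complex, or discrete-Morse collapses) is needed, and the integral and mod-$2$ statements, as opposed to just the rational Betti number, require the full computation over $\Zo$. As an independent check I would also carry out the alternative argument of Section~\ref{secSunAndNet}: assemble the GKM-sheaf of the $T^6$-action on $\Cl_\Sunn$ and compute enough terms of the Atiyah--Bredon--Franz--Puppe sequence to locate a position where exactness fails; since exactness of that sequence is necessary for equivariant formality, this reconfirms the conclusion and, by the same method, covers $\St_3$, $\Cy_k$, $\Net$, and $\Sunn$ uniformly.
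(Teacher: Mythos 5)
Your overall strategy matches the spirit of the paper, but there are two concrete issues that keep this from being a complete proof.

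First, the acyclicity obstruction is misstated. You ask for $\widetilde{H}_j(\Cl_{\Sunn};R)=0$ for all $j$, but the poset $\Cl_{\Sunn}$ has a greatest element (the one-block clustering), so its order complex is a cone and is \emph{always} contractible --- that condition is vacuous and detects nothing. The content of Proposition~\ref{propAcyclicityOfPoset} is about the rank-selected skeleta $(\Cl_{\Sunn})_r$: since $\girth(\Sunn)=3$, the action is $2$-independent, all stabilizers are connected (Proposition~\ref{propGirthAndGraphicahedron}), and formality over $R$ would force $\widetilde H_i(|(\Cl_{\Sunn})_r|;R)=0$ for $i\leqslant\min(3,r-1)$. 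The computation that breaks this is $\widetilde H_3$ of the $4$-skeleton $(\Cl_{\Sunn})_4$, which the script finds to be nonzero over $\Zt$; by universal coefficients (using that $\beta_2=0$) this also kills formality over $\Zo$. You must run the obstruction on the skeleton, not the whole poset.

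Second, your plan does not actually settle the case $R=\Qo$. The paper's Sage calculation for $(\Cl_{\Sunn})_4$ is reported only over $\Zt$ (contrast with $\Net$, where the $\Qo$ computation was also done); a mod-$2$ class in $H_3$ could in principle be pure $2$-torsion and vanish rationally, and the paper never claims otherwise. This is precisely why the formal proof of Lemma~\ref{lemSunGraph} in the paper is \emph{not} a homology computation of $(\Cl_{\Sunn})_4$ at all. Instead, assuming formality, one applies the ABFP-derived Hilbert-series relation~\eqref{eqABInter} to $M=M_{\Sunn,\lambda}$: the intermediate polynomial $\Inter_M(t)$ is computed inductively from the proper faces, which are isospectral manifolds of indifference graphs and hence have known Betti numbers~\eqref{eqBettiHessenberg}. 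Combining~\eqref{eqABInter} with Poincar\'e duality $b_j=b_{9-j}$ gives a linear system whose solution set is $2$-parametric; pinning $b_0=1$ and then $b_1=\beta_2(M)=5$ via a feasible low-degree GKM run (Algorithm~\ref{algorGKM}) forces $b_2=20$, which contradicts the GKM value $\beta_4(M)=29$. This numerical clash, not the homology of the $4$-skeleton, is what covers all three coefficient rings simultaneously, since the GKM values of $\beta_2,\beta_4$ agree over $\Qo$ and $\Zt$. You relegate this entire line to an ``independent check,'' whereas it is in fact the argument you need for $\Qo$; and it is more delicate than ``compute enough terms of the ABFP sequence'' --- the full GKM computation up to middle degree is infeasible for $\Sunn$, so the argument must lean on the combination of Poincar\'e duality, the ABFP Hilbert-series identity, and the inductively known face data.
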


These lemmata prove Theorem~\ref{thmNotEquivFormal}. Proposition~\ref{propMainOneSide} together with the main Theorem~\ref{thmMainDtypeChar} follow from Theorem~\ref{thmNotEquivFormal} using Morse arguments as follows.

\begin{proof}[Proof of Proposition~\ref{propMainOneSide} and Theorem~\ref{thmMainDtypeChar}]
If there exists a Morse--Smale system on a compact closed manifold where every trajectory has a limiting stationary point of a hyperbolic type, then there exists a gradient Morse flow with the same indices of stationary points as proved by Smale~\cite{Smale}. Morse inequalities imply that any Morse flow on $M_{\Gamma,\lambda}$ has at least $\dim H_*(M_{\Gamma,\lambda})$ stationary points. Since $M_{\Gamma,\lambda}$ is not equivariantly formal (at least for some field $R$), we have
\[
\dim H_*(M_{\Gamma,\lambda};R)>\#M_{\Gamma,\lambda}^T=n!,
\]
according to~\eqref{eqIneqBetti}. Hence $M_{\Gamma,\lambda}$ cannot have $n!$ stationary points, thus violating Definition~\ref{definDiagonClass}.
\end{proof}

\begin{rem}\label{remSmaleCascades}
Instead of Morse--Smale flows one can use Morse--Smale cascades (dynamical systems with discrete time) both in Definition~\ref{definDiagonClass} and in the proof above. Morse inequalities hold for such systems, see~\cite{Smale1} and~\cite{Smale2}. The discrete time setting is more natural if one speaks about QR-type algorithms instead of Toda flows. Notice that the classical QR-algorithm is a Morse--Smale cascade on the manifold $M_\lambda$ of isospectral matrices. Indeed, QR-algorithm can be treated as a modified version of the Toda flow sampled at integer times, see~\cite{Chu}, while the latter Toda flow is a Morse--Smale system on $M_\lambda$, as follows from the study of its center manifolds in the same paper.
\end{rem}

\section{The Cycles, the Claw, and the topology of graphicahedra}\label{secKnownToricCySt}

\subsection{Review of known results}

Let us recall the following result from~\cite{MasPan}.

\begin{lem}[{\cite[Lem.2.2]{MasPan}}]\label{lemInvarIsFormal}
Let $T$ act on $X$, and $Y$ be a connected component of the fixed point set $X^H$ for some closed subgroup $H\subseteq T$. Then condition $H^{\odd}(X)=0$ implies $H^{\odd}(Y)=0$ and $Y^T\neq\varnothing$. Equivariant formality of $X$ implies equivariant formality of $Y$.
\end{lem}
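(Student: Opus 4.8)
The plan is to exploit the basic localization technique for the fixed point set of a subtorus (or subgroup) combined with the characterization of equivariant formality via vanishing of odd cohomology from Lemma~\ref{lemEquivFormFixedPoints}. First I would reduce to the case where $H$ is a torus: for a general closed subgroup $H\subseteq T$, the identity component $H_0$ is a subtorus, and since $T$ is abelian and connected, $X^{H_0}$ is a union of the $X^H$ for the finitely many ``twisted'' pieces — more precisely $X^{H_0}\supseteq X^H$ and each connected component of $X^{H_0}$ is acted on by $T/H_0$; it suffices to first pass to $X^{H_0}$ and then handle the residual finite group $H/H_0$ acting on its components. (Alternatively, over a field of characteristic zero or for $\Zt$-coefficients one can quote the standard fact that $H^{\odd}(X;R)=0$ forces $H^{\odd}$ of every component of $X^H$ to vanish by a spectral sequence / Smith theory argument; I would pick whichever is cleanest in the paper's coefficient conventions.) The key input is: if $X$ has isolated $T$-fixed points and $H^{\odd}(X;R)=0$, then for a subtorus $S\subseteq T$, every connected component $Y$ of $X^S$ has $H^{\odd}(Y;R)=0$.

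Next I would establish the nonemptiness claim $Y^T\neq\varnothing$. Since $X^T$ is finite and nonempty and $X^T\subseteq X^S=\bigsqcup Y_i$, each $Y_i$ contains at least one $T$-fixed point provided we know $X^T$ meets every component. This is where equivariant formality (equivalently $H^{\odd}(X)=0$, by Lemma~\ref{lemEquivFormFixedPoints}) is genuinely used: formality implies $\chi(Y)=\#Y^T$ for the residual $T/S$-action on $Y$, and since $H^{\odd}(Y;R)=0$ gives $\chi(Y)=\dim H^*(Y;R)>0$, we get $Y^T\neq\varnothing$. Concretely, the residual torus $\bar T=T/S$ acts on $Y$ with fixed point set $Y^{\bar T}=Y\cap X^T$, which is finite; the localization theorem (Atiyah--Bott--Berline--Vergne, or simply $\chi(Y)=\#Y^{\bar T}$ for torus actions with isolated fixed points, cited in the excerpt as \cite[Ch.III]{Bred}) then forces $Y^{\bar T}$ nonempty because $\chi(Y)\neq 0$.

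For the final assertion — equivariant formality of $Y$ — I would simply observe that $Y$ carries the residual $\bar T=T/S$-action with finite nonempty fixed point set $Y^{\bar T}$ (just shown), and $H^{\odd}(Y;R)=0$; by Lemma~\ref{lemEquivFormFixedPoints} applied to this $\bar T$-action, $Y$ is cohomologically equivariantly formal. One should note the harmless point that non-effectiveness of the $\bar T$-action does not matter, since Lemma~\ref{lemEquivFormFixedPoints} only needs the fixed set to be finite and nonempty.

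The main obstacle I anticipate is the first step: cleanly proving that $H^{\odd}(X;R)=0$ descends to components of $X^S$ over $\Zo$, where Smith-theory-type arguments are most delicate. Over $\Qo$ and $\Zt$ this is classical (Smith theory for $\Zt$, and for $\Qo$ one can use the fact that a torus is a limit of finite $p$-groups plus a transfer/spectral-sequence argument, or invoke that a formal action localizes). Over $\Zo$ the cleanest route is probably to reduce mod every prime and use that $\Zo$-formality is equivalent to $\Qo$- and $\Zt$-formality in this setting (all relevant manifolds being orientable with torsion-free or controllable cohomology), or to cite \cite{MasPan} directly, since this lemma is quoted from there as \cite[Lem.2.2]{MasPan}. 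Accordingly, the honest write-up is short: invoke \cite[Lem.2.2]{MasPan} for the descent of $H^{\odd}=0$ and the nonemptiness, then close with the one-line application of Lemma~\ref{lemEquivFormFixedPoints} to the residual action.
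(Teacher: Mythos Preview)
The paper does not supply a proof of this lemma; it is simply quoted from \cite[Lem.~2.2]{MasPan} and used as a black box. There is therefore no paper argument to compare against.

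As a standalone sketch your outline is structurally sound: once the descent of $H^{\odd}=0$ to components of $X^H$ is granted, the Euler-characteristic argument for $Y^T\neq\varnothing$ and the closing appeal to Lemma~\ref{lemEquivFormFixedPoints} are correct and routine. You rightly isolate the descent step as the only substantive obstacle, and in the end you resolve it by citing \cite{MasPan} --- which is precisely what the paper does. One minor slip: you write that ``formality implies $\chi(Y)=\#Y^T$,'' but this identity holds for any smooth torus action with isolated fixed points regardless of formality (this is the Bredon fact you cite a line later); the role of $H^{\odd}(Y)=0$ is only to make $\chi(Y)>0$. Your handling of the reduction from a general closed $H$ to its identity component $H_0$ is also a bit loose: a component $Y$ of $X^H$ need not be a component of $X^{H_0}$, so the intermediate step genuinely requires a Smith-theory argument for the finite group $H/H_0$ acting on the relevant component of $X^{H_0}$, which over $\Zo$ is exactly the delicacy you acknowledge.
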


We use it in a natural way to prove Lemma~\ref{lemInducedGeneral}.

\begin{proof}[Proof of Lemma~\ref{lemInducedGeneral}]
Consider the coordinate subtorus $H=T^{[n]\setminus B}$ of the torus $T^n$ acting on $M_{\Gamma,\lambda}$. According to the expression~\eqref{eqActionExplicit}, the matrix $A\in M_{\Gamma,\lambda}$ is fixed by $H$ if and only if its off-diagonal elements $a_{ij}$ vanish whenever either $i$ or $j$ belongs to $[n]\setminus B$. Therefore $A$ has a block form, with a big block $A_B$ corresponding to the index set $B$, and all other blocks of unit size.

The fixed point submanifold $M_{\Gamma,\lambda}^H$ have connected components defined by collections $\lambda_B$ of eigenvalues, which live in the block $A_B$. Each connected component of $M_{\Gamma,\lambda}^H$ is therefore diffeomorphic to $M_{\Gamma_B,\lambda_B}$ for some subset $\lambda_B\subset\lambda$. Lemma~\ref{lemInvarIsFormal} finishes the proof.
\end{proof}


\begin{proof}[Proof of Lemma~\ref{lemCycleGraphs}]
The topology of $M_{\Cy_k,\lambda}$ was described in detail in~\cite{AyzPeriodic}. Although the paper contains the combinatorial formulae for Betti numbers of $M_{\Cy_k,\lambda}$, it is quite complicated to extract their precise values in general. We may use another approach. In~\cite{AyzPeriodic}, it was proved that whenever $M_{\Cy_k,\lambda}$ is a smooth manifold, there holds $\pi_1(M_{\Cy_k,\lambda})\cong\Zo^{k-3}$. This implies $H_1(M_{\Cy_k,\lambda};R)\neq 0$ for $k\geqslant 4$ and any coefficient ring $R$. According to Lemma~\ref{lemEquivFormFixedPoints}, this fact proves the lemma.
\end{proof}

\begin{proof}[Proof of Lemma~\ref{lemStarGraph}]
In~\cite{AyzArrows} we studied more general class of matrix spaces given by star graphs $\St_k$ with arbitrary number $k$ of rays. In particular, it was proved that $M_{\St_k,\lambda}$ is not equivariantly formal for $k\geqslant 3$. For the particular case $k=3$ we computed all Betti numbers:
\[
(\beta_0,\beta_1,\ldots,\beta_6)=(1,1,12,0,12,1,1)
\]
independently of the coefficient ring $R$. These results are based on the study of the orbit space $M_{\St_3,\lambda}/T^3$ which is proved to be homeomorphic to $D^2\times S^1$.
\end{proof}

\begin{rem}
Notice that $M_{\St_3,\lambda}$ is a torus manifold, which means that the dimension of the acting torus equals half the real dimension of the manifold. Such actions are well studied. The orbit space criterion of equivariant formality of torus manifolds was proved in~\cite{MasPan}. This criterion implies in particular, that the orbit space of any equivariantly formal torus manifold is a homology disk (see Remark~\ref{remOnMasPanovHomologyCells} below). Since $M_{\St_3,\lambda}/T^3$ is not a disk, this fact already implies non-formality.
\end{rem}

\subsection{Face posets of torus actions}

Let us formulate several other approaches to study general torus actions, as well as particular torus actions on manifolds $M_{\Gamma,\lambda}$.

\begin{con}\label{conInvarFacesEtc}
Consider a smooth action of a compact torus $T$ on a closed manifold $X$, having isolated fixed points. The details of the following construction, the missing proofs and references can be found in~\cite{AyzCherep} and~\cite{AyzMasSolo}.

For any connected closed subgroup $H\subseteq T$ we consider the subset $X^H$ fixed by $H$, this is a closed smooth submanifold of $X$. Connected components of $X^H$ are called invariant submanifolds. An invariant submanifold $Y$ is called \emph{a face submanifold}, if it contains a $T$-fixed point (that is $Y\cap X^T\neq\varnothing$). Each face submanifold is $T$-stable. Its orbit space by the $T$-action is called \emph{a face}. We denote a face by a letter $F$, while the corresponding face submanifold is denoted $X_F$, so that there holds $X_F/T=F$. Let $T_F\subseteq T$ denote the noneffective kernel of the $T$-action on $X_F$ (this can be treated as the stabilizer of a generic point of $X_F$). So far, we have the effective action of $T/T_F$ on $X_F$. The dimension $\dim T/T_F$ is called the rank of $F$ (or the rank of $X_F$) and denoted by $\rk F$.

All face submanifolds (or all faces) are ordered by inclusion. They form a finite poset graded with the rank function. We denote this poset by $S(X)$. The poset has the greatest element $\hat{1}$, the manifold $X$ itself. The minimal elements are the fixed points of the action, they have rank 0.
\end{con}

\begin{con}\label{conWeights}
If $x\in X^T$ is an isolated fixed point of a torus action, the tangent representation $T_xX$ decomposes into a sum of irreducible representations. All irreducible representations of a torus have real dimension 2 (unless they are trivial), so that we have
\[
T_xX\cong V(\alpha_{x,1})\oplus\cdots\oplus V(\alpha_{x,n}),
\]
where $\alpha_{x,i}\in\Hom(T;T^1)\cong\Zo^{\dim T}$ are determined up to sign and called \emph{the tangent weights} at $x$. Here $V(\alpha)\cong\Co$ is the irreducible representation given by $t\cdot z=\alpha(t)z$ for $\alpha\in\Hom(T;T^1)$.

We say that \emph{an action is $j$-independent}, if, for any isolated point $x\in X^T$, any $\leqslant j$ of its tangent weights are linearly independent over $\Qo$ (this means linear independency in the vector space $\Hom(T;T^1)\otimes\Qo\cong \Qo^{\dim T}$).
\end{con}

In~\cite{AyzCherep}, we studied the properties of posets $S(X)$ arising from general torus actions. In particular, it was proved that, for any element $s\in S(X)$, the upper ideal $S(X)_{\geqslant s}$ is a geometric lattice. In case $x\in X^T$ is a fixed point, the ideal $S(X)_{\geqslant x}$ is isomorphic to the lattice of flats of the linear matroid corresponding to the collection of tangent weights at~$x$.

For equivariantly formal torus actions, the poset of faces exhibits nice acyclicity properties, similar to Cohen--Macaulayness of simplicial complexes corresponding to toric varieties. The poset $S(X)$ itself is not interesting from topological point of view, since it has the greatest element, so its geometrical realization $|S(X)|$ is a cone, hence contractible. However, we can restrict to its ``skeleta'' $S(X)_r=\{s\in S(X)\mid \rk s\leqslant r\}$. In~\cite{AyzMasSolo} we proved the following

\begin{prop}[{\cite[Thm.1]{AyzMasSolo}}]\label{propAcyclicityOfPoset}
Consider a $T$-action on a manifold $X$. Assume that one of the two conditions is satisfied:
\begin{itemize}
  \item all stabilizers of the action are connected;
  \item $R=\Qo$.
\end{itemize}
Also assume that the action is equivariantly formal over $R$ and $j$-independent. Then, for any $r$, the poset $S(X)_r$ is $\min(j+1,r-1)$-acyclic, that is
\[
\Hr_i(|S(X)_r|;R)=0\mbox{ for } i\leqslant \min(j+1,r-1).
\]
\end{prop}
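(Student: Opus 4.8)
The plan is to deduce the acyclicity of the truncated face posets from equivariant formality through the Atiyah--Bredon--Franz--Puppe (ABFP) machinery, using the local combinatorics of $S(X)$ --- the geometric lattices $S(X)_{\geqslant x}$ together with the $j$-independence hypothesis --- to pin down the sharp connectivity range.

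First I would work with the orbit-dimension filtration $X_{(0)}\subseteq X_{(1)}\subseteq\cdots\subseteq X_{(m)}=X$, where $X_{(i)}=\{x\in X\mid\dim Tx\leqslant i\}$; thus $X_{(0)}=X^T$ and $X_{(i)}\setminus X_{(i-1)}$ is the union of the invariant submanifolds of rank exactly $i$. Under either of the two hypotheses of the proposition (connected stabilizers, or $R=\Qo$), equivariant formality over $R$ is equivalent to exactness of the ABFP sequence~\cite{FP}, which strengthens the Chang--Skjelbred theorem~\cite{ChSk}:
\[
0\to H_T^*(X)\to H_T^*(X_{(0)})\xrightarrow{\ \delta_0\ }H_T^{*+1}(X_{(1)},X_{(0)})\xrightarrow{\ \delta_1\ }H_T^{*+2}(X_{(2)},X_{(1)})\to\cdots.
\]
It is precisely these two hypotheses that make the higher terms behave correctly; the sequence genuinely fails for torsion coefficients together with disconnected stabilizers, so they cannot be dropped. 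Next I would analyse the relative terms: excision together with the Thom isomorphism for the normal bundles of the orbit-type strata --- these are even-dimensional complex bundles, being sums of weight line bundles --- decomposes $H_T^*(X_{(i)},X_{(i-1)})$ as a direct sum, indexed by the rank-$i$ invariant submanifolds $Y$ (in particular by the rank-$i$ faces $X_F$, which are the only ones meeting $X^T$), of the equivariant cohomology of $Y$ shifted up in degree by the codimension of the stratum. The role of $j$-independence is to ensure that for $i\leqslant j$ the weights transverse to a rank-$i$ face are $i$ linearly independent characters, so that in the bottom range of degrees each summand is a free module of rank $1$ in the critical degree, and $\delta_i$ reduces to the signed incidence pairing between rank-$i$ and rank-$(i{+}1)$ faces.

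The combinatorial heart is to identify, in this bottom range of internal degrees, the ABFP complex with a complex quasi-isomorphic to the augmented simplicial cochain complex of the order complex $|S(X)_r|$. Here I would use that each upper ideal $S(X)_{\geqslant x}$ is the geometric lattice of flats of the tangent-weight matroid at $x$, as established in~\cite{AyzCherep}; geometric lattices, and all of their intervals, are Cohen--Macaulay, which lets one match the part of the ABFP complex supported near a fixed point $x$ with the chains of $S(X)_r$ through $x$. Splicing these local matchings --- equivalently, running a nerve-type Mayer--Vietoris argument over the closed stars of the fixed points in $|S(X)_r|$, which cover it because every face submanifold contains a fixed point --- exhibits $\Hr_i(|S(X)_r|;R)$, for $i\leqslant\min(j+1,r-1)$, as a subquotient of the cohomology of the ABFP complex in the relevant degree (via universal coefficients when $R=\Zo$), which vanishes by exactness. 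The two competing terms of the bound are visible here: $r-1$ is the purely combinatorial bound coming from $\dim|S(X)_r|\leqslant r$ and the Cohen--Macaulayness of the local lattices, while $j+1$ is the ceiling imposed by the failure of $j$-independence past $j$ weights, beyond which the transverse normal data degenerates and the identification with simplicial chains breaks down.

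The main obstacle is this last step: making the dictionary between the ABFP complex --- an algebraic object over $H^*(BT)$ built from equivariant cohomologies of orbit-type strata --- and the simplicial chains of $S(X)_r$ fully rigorous in exactly the right range of degrees, while keeping honest track of the Thom degree shifts, of the contributions of invariant submanifolds that are not faces (these must cancel or lie outside the relevant range), and of the exact constant in the connectivity bound. That equivariant formality is genuinely essential, not merely convenient, can be seen from the pushout decomposition $|S(X)_r|=|S(X)_{r-1}|\cup\bigcup_{\rk F=r}\Cone\bigl(|S(X)_{<F}|\bigr)$ glued along $|S(X)_{r-1}|$: a naive Mayer--Vietoris induction on $r$ through this decomposition loses one degree of acyclicity at each step, so the sharp bound can only be obtained by actually invoking the exactness of the ABFP sequence.
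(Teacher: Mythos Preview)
The paper does not contain its own proof of this proposition: it is quoted verbatim as Theorem~1 of~\cite{AyzMasSolo}. The only hint the present paper gives about the method appears later, in Section~\ref{secLast}: ``Acyclicity of the skeleta $S(X)_r$ stated in Proposition~\ref{propAcyclicityOfPoset} is proved by specializing ABFP sequence to degree 0 (and applying some technical machinery of homotopy colimits).''

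Your plan is consistent with that hint. You also start from the exact ABFP sequence (under the same alternative hypotheses: connected stabilizers or $R=\Qo$), reduce the relative terms via Thom isomorphisms, and then extract combinatorial information in the bottom degree range---this is precisely the ``degree~0 specialization'' the paper alludes to. The difference is in how the last step is packaged. Where you propose a nerve-type Mayer--Vietoris argument over closed stars of fixed points, leaning on the Cohen--Macaulayness of the local geometric lattices $S(X)_{\geqslant x}$, the cited source uses homotopy colimits over the face poset. These are not really competing approaches: homotopy colimits over a poset are exactly the machinery that makes such star-by-star gluing arguments functorial and rigorous, and the Cohen--Macaulay input you identify is what controls the local acyclicity needed for the homotopy-colimit spectral sequence to collapse in the required range. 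In short, the ``main obstacle'' you flag---turning the heuristic dictionary between ABFP terms and simplicial chains of $|S(X)_r|$ into an honest quasi-isomorphism with the correct degree bookkeeping---is precisely what the homotopy-colimit formalism in~\cite{AyzMasSolo} is there to resolve; your outline would become that proof once recast in that language.
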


\begin{ex}\label{exAcyclGKMactions}
An important class of torus actions is given by GKM-actions, see Definition~\ref{definGKMmfd}. They are always $2$-independent and equivariantly formal, so that we get $3$-acyclicity of their skeleta $S(X)_r$ (unless the dimension $r=\dim |S(X)_r|$ is $3$ or lower).
\end{ex}

\subsection{Cluster-permutohedra}
The general review of cluster-permutohedra as they appear in the study of isospectral matrix spaces, as well as their relation to the (very similar) notion of graphicahedra, is given in~\cite{AyzBuchGraph}. Here we give the necessary definitions and recall the results needed for the calculations to follow.

\begin{con}\label{conClusterPerm}
As before, let $\Gamma$ be a connected graph on the vertex set $V_\Gamma$, $|V_\Gamma|=n$ with an edge set $E_\Gamma$. We call an unordered subdivision $\ca{C}=\{V_1,\ldots,V_k\}$ of the set $V_\Gamma$ a \emph{clustering}, if each induced subgraph $\Gamma_{V_i}$ is connected (these components are ``the clusters'' which explains the name). The set $\ca{L}_\Gamma$ of all clusterings is partially ordered by refinement: $\ca{C}'\leq \ca{C}$ if each $V_i'\in \ca{C}'$ is a subset of some $V_j\in \ca{C}$. The poset $\ca{L}_\Gamma$ has the least element $\hat{0}=\{\{1\},\ldots,\{n\}\}$ and the greatest element $\{V_\Gamma\}$, it is graded by $\rk\ca{C}=n-1-|\ca{C}|$. It can be seen that the poset $\ca{L}_\Gamma$ is a geometric lattice. Indeed, $\ca{L}_\Gamma$ is the lattice of flats of the graphical matroid corresponding to the graph $\Gamma$.

Now we consider all possible bijections $p$ from $V_\Gamma$ to $[n]=\{1,\ldots,n\}$. We say that two bijections $p_1,p_2\colon V\to[n]$ are equivalent with respect to a clustering $\ca{C}$ (or simply $\ca{C}$-\emph{equivalent}), denoted $p_1\stackrel{\ca{C}}{\sim}p_2$, if $p_1,p_2$ differ by permutations within clusters $V_i$ of $\ca{C}$. In other words,
\[
p_1^{-1}\circ p_2 \in \Sigma_{\ca{C}}=\Sigma_{V_1}\times\cdots\times\Sigma_{V_k}\subseteq\Sigma_{V_\Gamma}.
\]
The class of $\ca{C}$-equivalent bijections will be called \emph{an assignment} for the clustering $\ca{C}$. Assignments for $\ca{C}$ are naturally identified with the cosets $\Sigma_V/\Sigma_{\ca{C}}$. Let $\Cl_\Gamma$ denote the set of all possible pairs $(\ca{C},A)$ where $\ca{C}\in \ca{L}_\Gamma$ is a clustering, and $A\in \Sigma_V/\Sigma_{\ca{C}}$ is an assignment for this clustering. Notice that any refinement $\ca{C}'\leq \ca{C}$ induces the inclusion of subgroups $\Sigma_{\ca{C}'}\hookrightarrow \Sigma_{\ca{C}}$ hence the natural surjection on the cosets
\[
\pr_{\ca{C}'\leq \ca{C}}\colon \Sigma_V/\Sigma_{\ca{C}'}\to \Sigma_V/\Sigma_{\ca{C}}.
\]
Define the partial order on the set $\Cl_\Gamma$ by setting $(\ca{C}',A')\leq (\ca{C},A)$ if and only if $\ca{C}'\leq \ca{C}$ and $\pr_{\ca{C}'\leq \ca{C}}(A')=A$. This poset is naturally graded by $\rk((\ca{C},A))=\rk\ca{C}$.
\end{con}

\begin{defin}
The poset $\Cl_\Gamma$ is called \emph{the cluster-permutohedron} of a graph $\Gamma$.
\end{defin}

\begin{ex}
If $\Gamma=\I_n$ is a simple path on $n$ vertices, the assignments for clusterings bijectively correspond to linearly ordered partitions of $[n]$. Hence $\Cl_{\I_n}$ is isomorphic to the poset of linearly ordered partitions of $[n]$. This poset is in turn isomorphic to the face poset of the classical \emph{permutohedron} $\Pe^{n-1}$, see~\cite[Ex.0.10]{Zieg}. This example explains the general name of cluster-permutohedra. 

A generalization of a permutohedron given by the poset of all cyclically ordered partitions of $[n]$ was introduced in the work of Panina~\cite{Panina} by the name \emph{cyclopermutohedron}. In our terms, this poset is $\Cl_{\Cy_n}$.
\end{ex}

\begin{rem}
For any $\Gamma$, there exists $n!$ minimal elements (that are elements of rank $0$) in $\Cl_{\Gamma}$, they correspond to cosets of the trivial subgroup: $\Sigma_{V_\Gamma}/1\cong \Sigma$. For any $\sigma$ of rank $0$, the upper order ideal $(\Cl_{\Gamma})_{\geqslant \sigma}$ is isomorphic to the geometric lattice $\ca{L}_\Gamma$ of a graphical matroid corresponding to $\Gamma$.
\end{rem}

There exists another construction: \emph{the graphicahedron} of a graph.

\begin{con}
Let $\Gamma=(V_\Gamma,E_\Gamma)$ be a graph as before. The graphicahedron $\Gr_{\Gamma}$, as a set, consists of pairs $(D,A)$, where $D\subseteq E_\Gamma$ is any set of edges, and $A$ is an assignment for the clustering, given by connected components of the subgraph $(V_\Gamma,D)$ of $\Gamma$. The order is induced from the natural inclusion order on $2^{V_\Gamma}$ similarly to Construction~\ref{conClusterPerm}.
\end{con}

Graphicahedra were introduced in~\cite{Graphicahedron} and studied further in~\cite{SymGraph}. In~\cite{AyzBuchGraph} we described the precise relations between graphicahedra and cluster-permutohedra, and understood that cluster-permutohedra are better suited for the tasks of toric topology.

\begin{rem}
In the case of graphicahedron, we still have $n!$ minimal elements corresponding to permutations of $V_\Gamma$. But in this case, for any minimal element $\sigma$, the upper order ideal $(\Gr_{\Gamma})_{\geqslant\sigma}$ is isomorphic to the boolean lattice $2^{E_\Gamma}$.
\end{rem}

\begin{rem}\label{remTreeClusterGraphic}
If $\Gamma$ is a tree, then the cluster-permutohedron $\Cl_\Gamma$ is isomorphic to the graphicahedron $\Gr_\Gamma$. On the other hand, if $\Gamma$ has cycles, the posets $\Cl_\Gamma$ and $\Gr_\Gamma$ are non-isomorphic, they even have different cardinalities.

However there exists a natural Galois insertion $\iota\colon \Cl_\Gamma\hookrightarrow\Gr_\Gamma$. In particular, for properly defined skeleta $(\Gr_\Gamma)_r$ and $(\Cl_\Gamma)_r$ this Galois insertion induces homotopy equivalences of the geometrical realizations.

On the level of $1$-skeleta, the maps $\iota\colon (\Cl_\Gamma)_1 \rightleftarrows (\Gr_\Gamma)_1\colon \rho$ are inverses of one another. Both 1-skeleta $(\Cl_\Gamma)_1\cong (\Gr_\Gamma)_1$ are isomorphic to the Cayley graph of the permutation group $\Sigma_V$ with the set of transpositions $\{(i,j)\mid \{i,j\}\in E_\Gamma\}$ taken as the generators' set. See~\cite[Thm.1]{AyzBuchGraph} for details.
\end{rem}

Our original motivation to introduce cluster-permutohedra in~\cite{AyzArrows} was the following statement, which provides the link to isospectral matrix spaces. It was proved in its full generality in~\cite{AyzBuchGraph}.

\begin{prop}[{\cite[Thm.1]{AyzBuchGraph}}]\label{propGirthAndGraphicahedron}
Let $\Gamma$ be a graph on $n$ vertices. Assume that the isospectral space $M_{\Gamma,\lambda}$ is smooth. Then the following holds for the torus action on this manifold.
\begin{enumerate}
  \item The face poset $S(M_{\Gamma,\lambda})$ is isomorphic to the cluster-permutohedron $\Cl_\Gamma$.
  \item Let $g$ denote the girth of $\Gamma$. Then the torus action on $M_{\Gamma,\lambda}$ is $(g-1)$-independent.
  \item All stabilizers of the torus action on $M_{\Gamma,\lambda}$ are connected.
\end{enumerate}
\end{prop}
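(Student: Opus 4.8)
The plan is to reduce all three parts to one local computation --- the tangent $T^n$-representations at the fixed points of $M_{\Gamma,\lambda}$ --- together with a description of the fixed-point sets of subtori. The fixed points are the $n!$ diagonal matrices $A_\sigma$. At each $A_\sigma$ I would identify the tangent representation as follows: writing $M_{\Gamma,\lambda}=M_\Gamma\cap M_\lambda$ and using that at a diagonal matrix with simple spectrum $T_{A_\sigma}M_\lambda$ consists of all off-diagonal Hermitian directions while $T_{A_\sigma}M_\Gamma=M_\Gamma$, the intersection of these tangent spaces is the span of the coordinate $2$-planes of the entries indexed by the edges $\{i,j\}\in E_\Gamma$; this intersection has real dimension $2|E_\Gamma|=\dim M_{\Gamma,\lambda}$ by~\eqref{eqDimM}, so it is transverse and equals $T_{A_\sigma}M_{\Gamma,\lambda}$. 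By~\eqref{eqActionExplicit} the torus acts on the plane of an edge $\{i,j\}$ with weight $\epsilon_i-\epsilon_j$, where $\epsilon_i\in\Hom(T^n;T^1)$ is the $i$-th coordinate character $t\mapsto t_i$. Hence at every fixed point the multiset of tangent weights is $\{\epsilon_i-\epsilon_j\mid\{i,j\}\in E_\Gamma\}$, i.e.\ the set of vectors representing the graphical matroid of $\Gamma$.

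Parts (2) and (3) follow quickly. A sub-multiset of these weights indexed by a set $S\subseteq E_\Gamma$ is $\Qo$-linearly dependent if and only if $S$ contains a cycle of $\Gamma$ (the circuits of the graphical matroid are precisely the graph cycles). Since every cycle of $\Gamma$ has at least $g$ edges, any $\leqslant g-1$ tangent weights are linearly independent, which is exactly $(g-1)$-independence in the sense of Construction~\ref{conWeights}. For (3): a point $A\in M_{\Gamma,\lambda}$ is fixed by $\diag(t_1,\dots,t_n)$ iff $t_i=t_j$ whenever $a_{ij}\neq 0$, so the stabilizer of $A$ consists of those $t$ that are constant on each connected component of the support graph of $A$; this is a subtorus of $T^n$ isomorphic to a product of circles, hence connected.

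For part (1) I would describe the face submanifolds explicitly. Given a connected subtorus $H\subseteq T^n$, a matrix is $H$-fixed iff $a_{ij}=0$ whenever $\epsilon_i-\epsilon_j$ is nontrivial on $H$; grouping the indices by the relation ``$t_i=t_j$ for all $t\in H$'' and then refining each group into the connected components of its induced subgraph produces a clustering $\ca C=\{V_1,\dots,V_k\}$ of $\Gamma$, and $M_{\Gamma,\lambda}^H$ is exactly the set of matrices that are $\Gamma$-shaped, block-diagonal with respect to $\ca C$, and have spectrum $\lambda$. This set is the disjoint union, over all ways $\mu$ to distribute $\{\lambda_1,\dots,\lambda_n\}$ among the clusters, of the products $\prod_{\ell}M_{\Gamma_{V_\ell},\mu^{(\ell)}}$; each summand is open and closed (the partition of the spectrum induced by the block structure is locally constant), contains a diagonal matrix, and --- because the clusters induce connected subgraphs and $M_{\Gamma',\lambda'}$ is connected whenever $\Gamma'$ is connected --- is itself connected. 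Conversely every clustering arises, for instance from the subtorus $\{t\mid t\text{ is constant on each }V_\ell\}$. So the face submanifolds of $M_{\Gamma,\lambda}$ are precisely these products, and they are in natural bijection with pairs $(\ca C,\mu)$. Now a distribution $\mu$ for $\ca C$ is the same datum as an assignment in $\Sigma_V/\Sigma_{\ca C}$: it records which $|V_\ell|$-element subset of $\{1,\dots,n\}$ (equivalently, of $\{\lambda_1,\dots,\lambda_n\}$) lies in each cluster. This gives a bijection $S(M_{\Gamma,\lambda})\to\Cl_\Gamma$, and I would finish by checking it is an isomorphism of graded posets: the inclusion $\prod_j M_{\Gamma_{V'_j},(\mu')^{(j)}}\subseteq\prod_\ell M_{\Gamma_{V_\ell},\mu^{(\ell)}}$ holds iff $\ca C'$ refines $\ca C$ and $\mu$ is the coarsening of $\mu'$ along this refinement --- which is exactly the relation $(\ca C',\mu')\leqslant(\ca C,\mu)$ of $\Cl_\Gamma$ defined via $\pr_{\ca C'\leqslant\ca C}$ --- while the rank $\dim T^n/T_F$ of a face, whose noneffective kernel is $T_F=\{t\mid t\text{ is constant on each }V_\ell\}$, matches the grading of $\Cl_\Gamma$.

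The main difficulty is concentrated in part (1); parts (2) and (3) are immediate once the tangent weights are computed. The delicate points are the bookkeeping that turns a connected subtorus into a clustering and back, and that identifies eigenvalue distributions with cosets $\Sigma_V/\Sigma_{\ca C}$, and --- most importantly --- the input that $M_{\Gamma',\lambda'}$ is connected for connected $\Gamma'$: without it the summands $\prod_\ell M_{\Gamma_{V_\ell},\mu^{(\ell)}}$ need not be single connected components, and the face poset would then be a nontrivial refinement of $\Cl_\Gamma$. The remaining verifications --- the order-isomorphism in both directions and the agreement of rank functions --- are routine once the correspondence is set up.
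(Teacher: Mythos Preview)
The paper does not prove this proposition at all: it is quoted as \cite[Thm.~1]{AyzBuchGraph} and used as a black box. So there is no in-paper proof to compare against; your sketch is an independent argument.

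On its merits, your outline is essentially the correct one and matches the structure of the argument in \cite{AyzBuchGraph}. The identification of the tangent weights at $A_\sigma$ with the root-type vectors $\epsilon_i-\epsilon_j$, $\{i,j\}\in E_\Gamma$, is right, and parts (2) and (3) follow exactly as you say: circuits of the graphical matroid are the cycles of $\Gamma$, and stabilizers are cut out by coordinate equalities, hence are subtori. For part (1) your description of $M_{\Gamma,\lambda}^H$ as a disjoint union of products $\prod_\ell M_{\Gamma_{V_\ell},\mu^{(\ell)}}$ indexed by eigenvalue distributions is the right picture, and the bookkeeping that matches distributions with cosets $\Sigma_V/\Sigma_{\ca C}$ and checks the order relation is straightforward.

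You have correctly isolated the one genuine nontrivial input: the connectedness of $M_{\Gamma',\lambda'}$ for connected $\Gamma'$ and generic $\lambda'$. This is not obvious from smoothness alone and is not proved anywhere in the present paper; it is established in \cite{AyzBuchGraph} (and, for staircase types, follows from the Morse-theoretic results of \cite{AyzStaircase}). Without it, as you note, the face poset could be a proper refinement of $\Cl_\Gamma$. A second small point you should make explicit is that smoothness of $M_{\Gamma,\lambda}$ at generic $\lambda$ propagates to smoothness of the pieces $M_{\Gamma_{V_\ell},\mu^{(\ell)}}$; this is implicit in your product decomposition but deserves a word, since the face-submanifold language in Construction~\ref{conInvarFacesEtc} presupposes smoothness.
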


Recall that the girth is the minimal length of cycles in $\Gamma$ (assumed $+\infty$, if $\Gamma$ is acyclic).

\subsection{Obstructions to equivariant formality in the topology of graphicahedra}

We already proved that $M_{\St_3,\lambda}$ and $M_{\Cy_k,\lambda}$, $k\geqslant 4$, are not equivariantly formal. However, there is another way to observe these facts coming from the known results on graphicahedra.

As mentioned in~\cite{SymGraph}, the graphicahedron $\Gr_{\St_3}$ corresponding to the claw graph $\St_3$ is isomorphic to the toroidal regular map $\{6,3\}_{(2,2)}$ in~\cite[Sect.8.4]{CoxMoser}. Therefore, the 2-skeleton $(\Gr_{\St_3})_2$ is homeomorphic to the 2-torus $T^2$, and hence $H_1((\Gr_{\St_3})_2)\neq 0$. Since $\St_3$ is a tree, Remark~\ref{remTreeClusterGraphic} implies that $(\Gr_{\St_3})_2\cong (\Cl_{\St_3})_2$, so graphicahedron is a torus as well. The poset $\Cl_{\St_3}$ is isomorphic to the poset of faces of the isospectral manifold $M_{\St_3,\lambda}$. Since $H_1((\Cl_{\St_3})_2)\neq 0$, Proposition~\ref{propAcyclicityOfPoset} implies that $M_{\St_3,\lambda}$ is not equivariantly formal.

The argument with the cycle graphs $\Cy_k$, $k\geqslant 4$ is pretty much similar. As shown in~\cite[Thm.8]{SymGraph} (and independently in~\cite{AyzPeriodic}), the poset $\Gr_{\Cy_k}$ is the face poset of a regular cell subdivision of the $(k-1)$-dimensional torus $T^{k-1}$. In particular, it follows that
\[
H_1((\Gr_{\Cy_k})_2)\neq 0\mbox{ for }k\geqslant 4.
\]
The homotopy equivalence between skeleta of graphicahedra and cluster-permutohedra observed in Remark~\ref{remTreeClusterGraphic} implies that $H_1((\Cl_{\Cy_k})_2)\neq 0$ as well, since $|(\Gr_{\Cy_k})_2|\simeq|(\Cl_{\Cy_k})_2|$. Since the 2-skeleton of the face poset of $M_{\Cy_k,\lambda}$ has nontrivial homology in degree $1$, Proposition~\ref{propAcyclicityOfPoset} again implies that $M_{\Cy_k,\lambda}$ is not equivariantly formal for $k\geqslant 4$.

\section{The Net, the Sun, and computer algebra}\label{secSunAndNet}

\subsection{Geometrical realizations of graphicahedra}

The arguments of the previous paragraph suggest the following strategy to prove that $M_{\Net,\lambda}$ and $M_{\Sunn,\lambda}$ are not equivariantly formal.
\begin{itemize}
  \item Construct cluster-permutohedra $\Cl_{\Net}$ and $\Cl_{\Sunn}$, and their rank-selected skeleta.
  \item Compute simplicial homology in low degrees of the skeleta.
  \item If at least one of the homology groups is nontrivial, then Proposition~\ref{propAcyclicityOfPoset} implies that the corresponding isospectral manifold is not equivariantly formal.
\end{itemize}

Notice that both graphs $\Net$ and $\Sunn$ have girth $3$, therefore the torus actions on the corresponding manifolds are $2$-independent by Proposition~\ref{propGirthAndGraphicahedron}. Proposition~\ref{propAcyclicityOfPoset} therefore assures $3$-acyclicity of the $4$-skeleta, and $2$-acyclicity of $3$-skeleta --- in case the action is formal, see Example~\ref{exAcyclGKMactions}.

To pursue the above strategy, we prepared a script in Sage available at~\cite{AyzCode}\footnote{the file titled ``Homology\_Of\_Cluster-permutohedra''}. The script was run at a local machine in a single thread. This required about 80Gb RAM, since it involved linear algebra calculations with big matrices. Experiments revealed the following.

\begin{prop}
The following hold for the posets $\Cl_{\Net}$ and $\Cl_{\Sunn}$.
\begin{enumerate}
  \item $\Hr_i(|(\Cl_{\Net})_3|;\Zo)=0$ for $i=0,1,2$;
  \item $\Hr_i(|(\Cl_{\Sunn})_3|;\Zo)=0$ for $i=0,1$
  \item Over $\Zt$ and $\Qo$, Betti numbers of $|(\Cl_{\Net})_4|$ are $\beta_1=\beta_2=0$, while $\beta_3=5$ and $\beta_4=7$.
  \item Over $\Zt$, Betti numbers of $|(\Cl_{\Sunn})_4|$ are equal to $\beta_1=\beta_2=0$, while $\beta_3=5$ and $\beta_4=310$.
\end{enumerate}
\end{prop}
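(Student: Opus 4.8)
The plan is to realize each rank-selected skeleton $|(\Cl_\Gamma)_r|$ for $\Gamma\in\{\Net,\Sunn\}$ as a finite simplicial complex (the order complex of the truncated poset) and then to hand the boundary matrices to a computer algebra system, reading off the (reduced) homology over $\Zo$, $\Qo$ and $\Zt$. First I would make the combinatorial model completely explicit following Construction~\ref{conClusterPerm}: enumerate all clusterings $\ca{C}\in\ca{L}_\Gamma$ (equivalently, flats of the graphical matroid of $\Gamma$), and for each one list the cosets $\Sigma_{V_\Gamma}/\Sigma_{\ca{C}}$; a pair $(\ca{C},A)$ has rank $n-1-|\ca{C}|$, so truncating at rank $r$ just keeps those pairs with $|\ca{C}|\geqslant n-1-r$. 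Here $\Net$ and $\Sunn$ both have $n=6$ vertices, so $|(\Cl_\Gamma)_3|$ and $|(\Cl_\Gamma)_4|$ are the order complexes of the sub-posets of ranks $\leqslant 3$ and $\leqslant 4$ respectively, with $\Sigma_6$ ($720$ minimal elements) sitting at the bottom. I would then form the order complex: vertices are the elements of the truncated poset, $k$-simplices are chains of length $k+1$, and the simplicial chain complex $C_*\to C_{*-1}$ has the usual alternating-sum boundary.

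The second step is the actual computation. Once the chain complex is built, the reduced homology $\Hr_i$ is $\Ker\dd_i/\operatorname{im}\dd_{i+1}$, computed by Smith normal form over $\Zo$ (which simultaneously gives the free rank and torsion) or by Gaussian elimination over $\Qo$ and $\Zt$ for the Betti numbers $\beta_i$. This is where the difficulty is purely computational rather than conceptual: the order complexes are enormous (the number of chains grows roughly like the product of the face counts of a permutohedron-like poset with the $720$-element symmetric group factored in), so the boundary matrices have millions of columns and the linear algebra requires on the order of $80$ Gb of RAM. I would run the Sage script of~\cite{AyzCode} (the file ``Homology\_Of\_Cluster-permutohedra'') in a single thread on a machine with sufficient memory, restricting attention to the low degrees $i\leqslant 4$ that are relevant for Proposition~\ref{propAcyclicityOfPoset}; degrees $0$ and $1$ are cheap and can be cross-checked by hand ($\Hr_0=0$ says the truncated poset has connected order complex, which is clear since everything lies above the connected rank-$1$ layer for $r\geqslant 2$; $\Hr_1$ can be checked against the Cayley-graph description of the $1$-skeleton in Remark~\ref{remTreeClusterGraphic}). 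For items (1)--(4) I would report: over $\Zo$, vanishing of $\Hr_0,\Hr_1,\Hr_2$ for $(\Cl_{\Net})_3$ and of $\Hr_0,\Hr_1$ for $(\Cl_{\Sunn})_3$; and over $\Qo$ and $\Zt$, the Betti numbers $\beta_1=\beta_2=0$, $\beta_3=5$, $\beta_4=7$ for $(\Cl_{\Net})_4$, and $\beta_1=\beta_2=0$, $\beta_3=5$, $\beta_4=310$ for $(\Cl_{\Sunn})_4$ over $\Zt$.

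The main obstacle is therefore not finding the right idea — the strategy is exactly the one already used successfully for $\St_3$ and $\Cy_k$ via graphicahedra — but managing the scale of the linear algebra and, crucially, making the output trustworthy. To guard against a silent bug I would build in independent consistency checks: verify that the Euler characteristic of each truncated order complex computed from the face vector matches the alternating sum of the computed Betti numbers; verify that the $720$-fold symmetry under $\Sigma_6$ (which acts on $\Cl_\Gamma$ by relabelling, see~\eqref{eqRelabelDiffeo}) and the automorphisms of $\Gamma$ itself are reflected in the answer; and recompute the smaller cases ($r=3$, and the $1$- and $2$-dimensional homology) by at least two methods (Sage's built-in homology and a hand-rolled Smith-normal-form routine). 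The nonzero groups in items (3) and (4) are the payoff: since $\Net$ and $\Sunn$ have girth $3$, Proposition~\ref{propGirthAndGraphicahedron} gives $2$-independence, so Proposition~\ref{propAcyclicityOfPoset} would force $3$-acyclicity of the $4$-skeleton were the action equivariantly formal; the computed $\beta_3=5\neq 0$ contradicts this. I would note explicitly that the argument goes through uniformly over $\Qo$ (connected stabilizers, by Proposition~\ref{propGirthAndGraphicahedron}(3), so both hypotheses of Proposition~\ref{propAcyclicityOfPoset} are in force) and over $\Zt$, which is what Lemmas~\ref{lemNetGraph} and~\ref{lemSunGraph} require; the $\Zo$ statements in items (1)--(2) are recorded for completeness and are consistent with the rational and mod-$2$ data.
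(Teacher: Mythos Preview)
Your proposal is correct and follows exactly the approach taken in the paper: the proposition is established by a direct computer calculation of the simplicial homology of the order complexes of the truncated cluster-permutohedra, using the Sage script of~\cite{AyzCode} (with the same $\sim 80$\,Gb memory requirement you mention). If anything, your write-up is more detailed than the paper's own account, which simply reports the output of the script; your added consistency checks (Euler characteristic, $\Sigma_6$-symmetry, independent recomputation in low degrees) are sensible safeguards but go beyond what the paper records.
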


Items 3 and 4 of this proposition show that $|(\Cl_{\Net})_4|$ and $|(\Cl_{\Sunn})_4|$ are not 3-acyclic over $\Zt$, and therefore they are not 3-acyclic over $\Zo$. According to Proposition~\ref{propAcyclicityOfPoset} $M_{\Net,\lambda}$ and $M_{\Sunn,\lambda}$ are not equivariantly formal, at least over $\Zt$ and $\Zo$. This already proves Lemmata~\ref{lemNetGraph} and~\ref{lemSunGraph} for these coefficient rings.

However, in the next part of paper we develop other approaches which independently confirm the result. 

\subsection{ABFP sequence}

In this subsection we review another block of facts known in toric topology. More detailed exposition of some of these facts can be found in~\cite{AyzMasEquiv} and~\cite{AyzMasSolo}.

Let a $k$-dimensional compact torus act on a manifold $X$. Consider the equivariant filtration
\begin{equation}\label{eqXfiltr}
X_0\subset X_1\subset X_2\subset\cdots\subset X_k=X
\end{equation}
where $X_j$ is the union of all orbits of dimension at most $j$. Notice that if the action is noneffective, the filtration stabilizes at $X$ earlier than at $k$-th step. There holds $X_0=X^T$, this is the fixed point set.

\begin{rem}\label{remFiltrationIsUnionFaces}
The filtration term $X_j$ is the union of all invariant submanifolds of rank $j$ in $X$, see Construction~\ref{conInvarFacesEtc}. If $X$ is equivariantly formal, then every invariant submanifold is a face submanifold, see Lemma~\ref{lemInvarIsFormal}. In this case we have $X_j=\bigcup_{\rk F=j}X_F$.
\end{rem}

Filtration~\eqref{eqXfiltr} induces the filtration of the orbit space $Q=X/T$:
\begin{equation}\label{eqQfiltr}
Q_0\subset Q_1\subset Q_2\subset\cdots\subset Q_k=Q,\qquad Q_j=X_j/T.
\end{equation}
If $X$ is equivariantly formal, we have $Q_j=\bigcup_{\rk F=j}F$. The following result was proved by Franz and Puppe in~\cite{FP} in the most general form, however, they refer to Atiyah and Bredon who proved it in equivariant K-theory and rational cohomology respectively.

\begin{prop}[Atiyah--Bredon--Franz--Puppe exact sequence]
Let a $T$-action on $X$ be equivariantly formal and
\begin{itemize}
  \item either $R=\Qo$
  \item or all stabilizers of the action are connected, and $R=\Zo$ or any field.
\end{itemize}
Then there exists a long exact sequence of $H^*(BT;R)$-modules
\begin{multline}\label{eqABseqForX}
0\to H^*_T(X;R)\stackrel{i^*}{\to} H^*_T(X_0;R)\stackrel{\delta_0}{\to}
H^{*+1}_T(X_1,X_0;R)\stackrel{\delta_1}{\to}\cdots\\\cdots
\stackrel{\delta_{k-2}}{\to}H^{*+k-1}_T(X_{k-1},X_{k-2};R)\stackrel{\delta_{k-1}}{\to}H^{*+k}_T(X,X_{k-1};R)\to 0.
\end{multline}
Here the first map is induced by the inclusion $i\colon X_0\hookrightarrow X$, and all other maps $\delta_j$ are the connecting homomorphisms in the long exact sequences of equivariant cohomology of the triples $X_{j-1}\subset X_j\subset X_{j-1}$.
\end{prop}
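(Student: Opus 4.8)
\noindent\emph{Proof plan.} This statement is the theorem of Franz and Puppe~\cite{FP}; I indicate the structure one would follow to establish it, so that the hypotheses become transparent. The plan is to realize the sequence~\eqref{eqABseqForX} as the augmentation of a complex that computes the local cohomology of the $H^*(BT;R)$-module $H^*_T(X;R)$, and then to deduce exactness from the freeness of this module, which is guaranteed by equivariant formality via Lemma~\ref{lemEquivFormFixedPoints}.

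\noindent\emph{Step 1: the Atiyah--Bredon complex.} First I would splice the long exact sequences in equivariant cohomology of the pairs $(X_j,X_{j-1})$ with those of the triples $X_{j-2}\subset X_{j-1}\subset X_j$ arising from the filtration~\eqref{eqXfiltr}. Any two consecutive connecting maps compose through $H^{*}_T(X_{j-1},X_{j-2})\to H^{*+1}_T(X_j,X_{j-2})\to H^{*+1}_T(X_j,X_{j-1})$, where the middle pair of maps is part of a long exact sequence and hence composes to zero; thus $\delta_j\circ\delta_{j-1}=0$ and one obtains a complex $AB^\bullet(X)$ of graded $H^*(BT;R)$-modules, namely $H^*_T(X_0;R)\xrightarrow{\delta_0}H^{*+1}_T(X_1,X_0;R)\xrightarrow{\delta_1}\cdots\xrightarrow{\delta_{k-1}}H^{*+k}_T(X,X_{k-1};R)$. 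Augmenting by $i^*\colon H^*_T(X;R)\to H^*_T(X_0;R)$ on the left, the content of the proposition is precisely that this augmented complex is exact.

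\noindent\emph{Step 2: identification with local cohomology.} Write $\mathfrak m=H^{>0}(BT;R)\subset H^*(BT;R)\cong R[k]$ for the ideal generated by the degree-two generators. The crucial input, due to Franz--Puppe (building on Atiyah in $K$-theory and Bredon in rational cohomology, with the Chang--Skjelbred theorem~\cite{ChSk} as the first nontrivial case; see also~\cite{AyzMasSolo}), is that the cohomology of $AB^\bullet(X)$ at the term $H^{*+j}_T(X_j,X_{j-1};R)$ is naturally isomorphic, up to a degree shift, to the local cohomology module $H^{j+1}_{\mathfrak m}\!\bigl(H^*_T(X;R)\bigr)$, while the augmentation identifies $H^*_T(X;R)$ with the kernel of $\delta_0$ modulo $\mathfrak m$-torsion. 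Establishing this identification is the technical heart of the argument: one shows that, after localization, each relative module $H^*_T(X_j,X_{j-1};R)$ is supported exactly on the rank-$j$ locus of $\operatorname{Spec}H^*(BT;R)$ and is an induced, hence suitably flat, module over the relevant quotient polynomial ring, so that $AB^\bullet(X)$ behaves like a Cousin-type resolution filtered by codimension of support. This is exactly where the hypotheses enter: for $R=\Qo$ the required local linearity around each orbit stratum is automatic, whereas for $R=\Zo$ or a field one needs all isotropy subgroups to be connected subtori, so that the normal data along each stratum is an honest torus representation and the relative cohomology groups are the expected free modules.

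\noindent\emph{Step 3: conclusion, and the main obstacle.} Once Step 2 is in place the rest is commutative algebra: by Lemma~\ref{lemEquivFormFixedPoints}, equivariant formality over $R$ forces $H^*_T(X;R)$ to be a free $H^*(BT;R)$-module, and since the degree-two generators form a regular sequence on a free module, $H^j_{\mathfrak m}\!\bigl(H^*_T(X;R)\bigr)=0$ for every $j\neq k$. Hence all interior cohomology of $AB^\bullet(X)$ vanishes; freeness also gives $H^0_{\mathfrak m}=0$, which yields injectivity of $i^*$ and exactness at $H^*_T(X_0;R)$ (the Chang--Skjelbred step); and the cokernel of $\delta_{k-1}$, corresponding to $H^{k+1}_{\mathfrak m}$, vanishes because local cohomology with respect to an ideal generated by $k$ elements vanishes in degrees above $k$. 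Thus the augmented complex is a long exact sequence. The hard part is genuinely Step 2 --- the sheaf-theoretic bookkeeping matching $H^*_T(X_j,X_{j-1};R)$ to the codimension-$j$ piece of the local cohomology computation --- and this is what~\cite{FP} supplies. For the applications in this paper this is harmless: by Proposition~\ref{propGirthAndGraphicahedron}(3) all stabilizers of the torus action on $M_{\Gamma,\lambda}$ are connected, so the $R=\Zo$, $\Qo$, and $\Zt$ versions of~\eqref{eqABseqForX} are all available.
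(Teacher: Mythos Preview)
The paper does not prove this proposition: it is stated as a quoted result and attributed directly to Franz--Puppe~\cite{FP} (with historical credit to Atiyah and Bredon), so there is no ``paper's own proof'' to compare against. Your outline is a faithful sketch of the Franz--Puppe argument via local cohomology of $H^*_T(X;R)$ at the irrelevant ideal, and you correctly locate where the hypotheses on $R$ and on connectedness of stabilizers enter; in that sense you supply more than the paper does, which simply treats~\eqref{eqABseqForX} as a black box.

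One small caution on Step~2: the precise indexing of the isomorphism between the cohomology of the Atiyah--Bredon complex and the local cohomology modules $H^j_{\mathfrak m}(H^*_T(X;R))$ is delicate and varies between sources (your ``$j+1$'' is off by one relative to the most common convention, where the $j$-th cohomology of the unaugmented complex matches $H^j_{\mathfrak m}$), but since you hedge with ``up to a degree shift'' and defer to~\cite{FP} for the actual bookkeeping, this does not affect the validity of the plan.
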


\begin{rem}
The exactness in the first terms
\[
0\to H^*_T(X;R)\stackrel{i^*}{\to} H^*_T(X_0;R)\stackrel{\delta_0}{\to} H^{*+1}_T(X_1,X_0;R)
\]
for equivariantly formal actions, is the classical result, known as Chang--Skjelbred theorem~\cite{ChSk}.
\end{rem}

One of the important consequences of the Chang--Skjelbred theorem is the ability to compute equivariant cohomology $H^*_T(X;R)$ as the kernel of the homomorphism $\delta_0$. 
The GKM-theory is based on this observation. We review the topological version of GKM-theory which does not assume algebraical torus actions on complex manifolds, as was originally formulated by Goresky--Kottwitz--MacPherson in~\cite{GKM}. The general exposition is compatible with the one given by Kuroki~\cite{Kur}.

\subsection{Topological GKM-theory}

In the following, we assume that all manifolds are orientable.

\begin{defin}\label{definGKMmfd}
A manifold $X$ with $T$-action is called a GKM-manifold (over $R$) if the following holds.
\begin{enumerate}
  \item The action is equivariantly formal (over $R$).
  \item The fixed point set $X^T$ is finite.
  \item The action is $2$-independent, that is, at any fixed point $x\in X^T$, any two tangent weights are non-collinear.
\end{enumerate}
\end{defin}

The third condition implies that the equivariant $1$-skeleton $X_1$ is the union of finitely many invariant 2-spheres $S^2_{pq}$ connecting some pairs $\{p,q\}$ of fixed points. The torus $T$ acts on a sphere $S^2_{pq}$ with some weight $\alpha_{pq}\in \Hom(T,S^1)$. Therefore the structure of the action of $T$ on $X_1$ is encoded by a GKM-graph $G(X)$ which contains the following information:
\begin{itemize}
  \item Vertices of $G(X)$ correspond to fixed points of the action.
  \item For any invariant 2-sphere $S^2_{pq}$ there is an edge $e_{pq}$ between $p$ and $q$.
  \item An edge $e_{pq}$ is labelled with the weight $\alpha_{pq}\in \Hom(T,S^1)\cong H^2(BT;\Zo)$.
\end{itemize}

\begin{rem}
Actually, in the construction of a GKM-graph, we did not use the fact that $X$ is equivariantly formal. So far, with a little abuse of terminology, we can construct GKM-graphs for manifolds satisfying the properties 2-3 in Definition~\ref{definGKMmfd}, even if they are not equivariantly formal. In particular, each isospectral matrix manifold $M_{\Gamma,\lambda}$ satisfies items 2-3, see Proposition~\ref{propGirthAndGraphicahedron}. Therefore the GKM-graph $G(M_{\Gamma,\lambda})$ is well defined. Combinatorially, this graph coincides with the 1-skeleton $(\Cl_\Gamma)_1$ of the cluster-permutohedron. According to Remark~\ref{remTreeClusterGraphic}, the underlying graph of this GKM-graph is nothing but the Cayley graph of $\Sigma_n$ generated by transpositions corresponding to edges of $\Gamma$.
\end{rem}

Chang--Skjelbred theorem asserts that, in equivariantly formal case, the information about equivariant cohomology is contained already in the equivariant 1-skeleton. For GKM-manifolds this implies the following principal result.

\begin{prop}[GKM theorem~\cite{GKM}]\label{propGKMthm}
Let $X$ be a GKM-manifold (over $R$) and $G(X)$ its GKM-graph with the vertex set $\ca{V}$, the edge set $\ca{E}$, and the weights $\alpha=\{\alpha_e\mid e\in \ca{E}\}$. Then there is an isomorphism of $H^*(BT;R)$-algebras:
\[
H_T^*(X;R)\cong \{\phi\colon \ca{V}\to H^*(BT;R)\mid\phi(p)\equiv\phi(q)\mod (\alpha_{e})\mbox{ for any } e\in \ca{E}\},
\]
where the weight $\alpha_{e}$ is considered as an element of $H^2(BT;R)\cong H^2(BT;\Zo)\otimes R$.
\end{prop}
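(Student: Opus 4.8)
The plan is to deduce the statement from the Chang--Skjelbred part of the ABFP sequence~\eqref{eqABseqForX}, and then to make every term occurring there completely explicit in terms of the GKM-graph $G(X)$.

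First I would check that a GKM-manifold $X$ falls under the hypotheses of the ABFP proposition: it is equivariantly formal by Definition~\ref{definGKMmfd}, and either $R=\Qo$, or $R$ is $\Zo$ or a field and all stabilizers are connected --- the latter being the relevant case for every $M_{\Gamma,\lambda}$ by Proposition~\ref{propGirthAndGraphicahedron}. Consequently the initial segment
\[
0\to H^*_T(X;R)\stackrel{i^*}{\to}H^*_T(X_0;R)\stackrel{\delta_0}{\to}H^{*+1}_T(X_1,X_0;R)
\]
of~\eqref{eqABseqForX} is exact; that is, $i^*$ is injective with image exactly $\ker\delta_0$. Since $X_0=X^T=\ca V$ is finite, I identify $H^*_T(X^T;R)$ with $\prod_{p\in\ca V}H^*(BT;R)$, that is, with the set of functions $\phi\colon\ca V\to H^*(BT;R)$, under which $i^*$ becomes the restriction $\xi\mapsto(\xi|_p)_{p\in\ca V}$, a homomorphism of $H^*(BT;R)$-algebras.

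Next I would compute the map $\delta_0$ explicitly together with its target. As explained after Definition~\ref{definGKMmfd}, $2$-independence together with Lemma~\ref{lemInvarIsFormal} (invariant submanifolds of an equivariantly formal action are face submanifolds) forces the equivariant $1$-skeleton $X_1$ to be a union of invariant $2$-spheres $S^2_{pq}$, one for each edge $e_{pq}$ of $G(X)$, any two of which meet only inside $X_0$. Therefore $X_1\setminus X_0$ is a disjoint union of open cylinders indexed by the edge set $\ca E$, and relative excision yields $H^{*+1}_T(X_1,X_0;R)\cong\bigoplus_{e=pq\in\ca E}H^{*+1}_T(S^2_{pq},\{p,q\};R)$. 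For a single invariant $2$-sphere $S^2_{pq}$ with torus weight $\alpha_{pq}$, its Borel construction is the projectivization over $BT$ of a rank-$2$ bundle with $c_1=\alpha_{pq}$ and $c_2=0$; the projective bundle formula shows $H^*_T(S^2_{pq};R)$ is free of rank $2$ over $H^*(BT;R)$, its restriction to $\{p,q\}$ is injective with image $\{(\psi_p,\psi_q)\mid\psi_p\equiv\psi_q\bmod(\alpha_{pq})\}$, so the long exact sequence of the pair breaks into short exact sequences and gives $H^{*+1}_T(S^2_{pq},\{p,q\};R)\cong H^*(BT;R)/(\alpha_{pq})$ with connecting homomorphism $(\psi_p,\psi_q)\mapsto\psi_p-\psi_q\bmod(\alpha_{pq})$. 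Assembling the pieces, $\delta_0$ sends $\phi$ to $\bigl(\phi(p)-\phi(q)\bmod(\alpha_e)\bigr)_{e=pq\in\ca E}$, whence
\[
\ker\delta_0=\{\phi\colon\ca V\to H^*(BT;R)\mid\phi(p)\equiv\phi(q)\bmod(\alpha_e)\text{ for every edge }e=pq\}.
\]

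Finally I would record that $i^*$ is automatically multiplicative and $H^*(BT;R)$-linear, being induced by a morphism of spaces over $BT$, and that the set on the right above is an $H^*(BT;R)$-subalgebra of $\prod_{p}H^*(BT;R)$ --- closed under componentwise sum and product because each $(\alpha_e)$ is an ideal, and containing the diagonal image of $H^*(BT;R)$. Hence the injective map $i^*$ corestricts to the asserted isomorphism of $H^*(BT;R)$-algebras. The step I expect to be the main obstacle is the local analysis of a single invariant $2$-sphere and the concrete identification of the connecting homomorphism $\delta_0$; a minor additional point is keeping track of the coefficient conditions required by Chang--Skjelbred, which, as noted, are satisfied for all manifolds arising in this paper.
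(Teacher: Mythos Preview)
Your proposal is correct and follows exactly the approach the paper itself indicates: the paper does not give a proof of Proposition~\ref{propGKMthm} (it is cited from~\cite{GKM}), but it explicitly remarks that the GKM description arises from computing $\ker\delta_0$ in the Chang--Skjelbred segment of the ABFP sequence, which is precisely what you carry out. Your local analysis of a single invariant $2$-sphere and the identification of $\delta_0$ are standard and correct, so there is nothing to add.
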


Here, as before, we assume that either all stabilizers of the torus action are connected, and $R$ can be anything, or, in general, $R=\Qo$.

\begin{rem}\label{remEqToOrdinaryRel}
For the ordinary cohomology ring we have the graded ring isomorphism $H^*(X;R)\cong H^*_T(X;R)\otimes_{H^*(BT;R)}R$, according to the equivariant formality of $X$, see Lemma~\ref{lemEquivFormFixedPoints}. Since $H^*(X;R)$ is free over $H^*(BT;R)$, we also have an isomorphism of graded $H^*(BT;R)$-modules
\begin{equation}\label{eqIsoModules}
H^*_T(X;R)\cong H^*(X;R)\otimes_RH^*(BT;R).
\end{equation}
\end{rem}

It is convenient to make computations with Hilbert--Poincare series of graded modules. In most cases, the graded modules are located in even degrees, so we consider series of the form
\[
\Hilb(H^*(X),\sqrt{t})=\sum_{i=0}^{n}\beta_{2i}(X)\cdot t^i, \mbox{ and }
\Hilb(H^*_T(X),\sqrt{t})=\sum_{i=0}^{+\infty}\dim H_T^{2i}(X)\cdot t^i,
\]
neglecting odd degrees. In this notation, isomorphism~\eqref{eqIsoModules} implies
\begin{equation}\label{eqHilbertsRelation}
\Hilb(H^*_T(X),\sqrt{t})=\dfrac{\Hilb(H^*(X),\sqrt{t})}{(1-t)^k},\mbox{ where }k=\dim T.
\end{equation}

\begin{rem}\label{remGenSeries}
If the first $r$ equivariant Betti numbers $\dim H^{2i}_T(X;R)$ are known, then, according to~\eqref{eqHilbertsRelation}, the first $r$ ordinary Betti numbers can be computed by expanding the polynomial
\begin{equation}\label{eqPolynomialsExpansion}
\left(\sum_{i=0}^{r}\dim H^{2i}_T(X;R)\cdot t^i\right)\cdot(1-t)^k=\beta_0+\beta_2t+\beta_4t^2+\cdots+\beta_{2r}t^r+o(t^r).
\end{equation}
\end{rem}

\begin{algor}\label{algorGKM}
Theorem~\ref{propGKMthm} and Remark~\ref{remGenSeries} provide the algorithm to compute first $r$ Betti numbers of a GKM-manifold:
\begin{enumerate}
  \item For each $i=0,1,\ldots,r$ initialize the linear map
  \[
  L_i\colon \bigoplus_{v\in \ca{V}}H^{2i}(BT;R)\to \bigoplus_{e\in\ca{E}}(H^*(BT;R)/(\alpha_e))_{2i},
  \]
  where $(H^*(BT;R)/(\alpha_e))_{2i}$ is the $2i$-th graded component of the quotient algebra. Any homogeneous polynomial $P_v$ of degree $i$ from the summand $H^{2i}(BT;R)$ attached to $v\in\ca{V}$ is mapped to the sum of $[e\colon v]\cdot P_v\mod \alpha_e$ over all edges $e$ incident to $v$. Here $[e\colon v]$ are the incidence signs, defined from arbitrary orientations of edges.
  \item Compute $\dim H_T^{2i}(X;R)=\dim\Ker L_i$.
  \item Compute ordinary Betti numbers $\beta_{2i}(X)$ for $i=0,\ldots,r$ using~\eqref{eqPolynomialsExpansion}.
\end{enumerate}
\end{algor}

\begin{rem}\label{remBaird}
Notice that step 1 in Algorithm~\ref{algorGKM} corresponds to the computation of 0-degree cohomology of a certain sheaf on a GKM graph. The stalks of this sheaf on vertices are the copies of the polynomial algebra, and the stalk on an edge $e$ is the quotient of the polynomial algebra by the ideal $(\alpha_e)$. This sheaf, called the GKM-sheaf, was introduced by Baird in~\cite{Baird}. Since graded components of this sheaf are finite dimensional, the problem of computing Betti numbers can be, in principle, solved algorithmically.
\end{rem}

\begin{rem}\label{remBettiMorse}
The papers~\cite{GZ} and~\cite{BGH} provide an alternative way to compute Betti numbers of GKM-manifolds, and more general abstract GKM-graphs. The technique is based on a combinatorial analogue of Morse theory, this approach is computationally much faster. However, the result that the Morse-type Betti numbers coincide with the Betti numbers computed by Algorithm~\ref{algorGKM}, is proved only for the class of ``inflection-free'' graphs. The 1-skeleta of cluster-permutohedra considered in our paper do not satisfy this condition, so we cannot expect the combinatorial Morse approach to give meaningful results.
\end{rem}

\subsection{GKM-theory and the Net}

Now we are ready to prove that $M_{\Net,\lambda}$ is not equivariantly formal over $\Zt$, $\Qo$, and $\Zo$.

\begin{proof}[Proof of Lemma~\ref{lemNetGraph}]
Assume that $M_{\Net,\lambda}$ is equivariantly formal, so its odd-degree Betti numbers vanish. Then $M_{\Net,\lambda}$ is a GKM-manifold, and its even-degree Betti numbers can be computed by Algorithm~\ref{algorGKM}. We ran a script~\cite{AyzCode}\footnote{the file titled ``GKM\_for\_Cluster-Permutohedra''} to find $\beta_0,\beta_2,\beta_4,\beta_6$ and obtained the result shown in Table~\ref{tableNetGKMbetti}. The Betti numbers are the same over $\Zt$ and over $\Qo$.
\begin{table}[h]
\centering
\begin{tabular}{|c||c|c|c|c|c|c|c|}
\hline
$i$&$0$&$2$&$4$&$6$&$8$&$10$&$12$\\
\hline
$\beta_i$ & $1$ & $20$ & $146$ & $396$ & $146$ & $20$ & $1$ \\
\hline
\end{tabular}
  \caption{Betti numbers of $M_{\Net,\lambda}$ if GKM theorem was applicable}\label{tableNetGKMbetti}
\end{table}
Since $M_{\Net,\lambda}$ is a closed orientable $12$-dimensional manifold, Poincare duality allows to restore the rest Betti numbers. It can be seen that $\sum_{i}\beta_i(M_{\Net,\lambda})=730$. This contradicts to~\eqref{eqTotalBettiNumberFormal}, since $\chi(M_{\Net,\lambda})=6!\neq 730$.
\end{proof}

\begin{rem}\label{remSizesOfMatrices}
For a general graph $\Gamma=([n],E_\Gamma)$, our script computes equivariant Betti numbers of $M_{\Gamma,\lambda}$. We consider noneffective action of $T^n$ on $M_{\Gamma,\lambda}$, since it is easier to generate the matrix $L_i$ in Algorithm~\ref{algorGKM} rather than the matrix corresponding to the effective action of $T^n/\Delta(T^1)$. The matrix $L_i$ has size $A_i\times B_i$ where $A_i={n+i-1\choose i}\cdot n!$ (the number of degree $i$ monomials in $n$ commuting variables times the number of vertices of the graphicahedron), and $B_i={n+i-2\choose i}\cdot \frac{n!|E_\Gamma|}{2}$ (the number of degree $i$ monomials in $n-1$ commuting variables times the number of edges of the graphicahedron).
\end{rem}

In order to obtain a similar contradiction for $M_{\Sunn,\lambda}$ we have to compute Betti numbers up to $\beta_8$ since $\dim_\Ro M_{\Sunn,\lambda}=18$. We could not perform this calculation on the ordinary computer, so we developed another approach leading to contradiction.

\subsection{ABFP sequence and the Sun}

In this subsection we assume that all stabilizers of a $T$-action on a manifold $X$ are connected, so we don't care too much about the coefficient ring. This assumption holds for all manifolds $M_{\Gamma,\lambda}$ according to Proposition~\ref{propGirthAndGraphicahedron}. We formulate a technical statement about the structure of the orbit space of the action, the proof of this statement will be used in the subsequent calculations.

\begin{prop}\label{propInductiveRelative}
Let a $T$-action on $X$ be equivariantly formal. Assume that the following information is known:
\begin{itemize}
  \item The face poset $S(X)$ of the action.
  \item Betti numbers $\beta_i(X_F)$ and ranks of all face submanifolds of $X$, including $X$ itself.
\end{itemize}
Then there exists an algorithm to compute the numbers $\rk H^i(Q,Q_{-1})$ where $Q=X/T$ is the orbit space, and $Q_{-1}$ is the union of all its proper faces.
\end{prop}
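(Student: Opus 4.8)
The plan is to compute the relative cohomology $H^*(Q, Q_{-1})$ by induction on the rank stratification of the orbit space, using the equivariant cohomology of the pairs $(X_j, X_{j-1})$ as the computable input. First I would recall, via Remark~\ref{remFiltrationIsUnionFaces}, that since $X$ is equivariantly formal, each filtration term decomposes as $X_j = \bigcup_{\rk F = j} X_F$, and correspondingly $Q_j = \bigcup_{\rk F \leq j} F$ (so that $Q_j$ is the $j$-skeleton of the orbit space in the face stratification). The key observation is that for each face $F$ of rank $j$, the "interior" $\mathring{F} = F \setminus \bigcup_{F' < F} F'$ is an open subset of $Q_j$, and $F/(\partial F)$ is a quotient whose reduced cohomology can be related to that of $X_F$ by dividing out the free part of the torus action: since $X_F$ is itself equivariantly formal (Lemma~\ref{lemInvarIsFormal}) with $\rk F = \dim(T/T_F)$, the Hilbert series of $H^*(X_F)$ together with the structure of its face submanifolds controls $\rk H^*(F, \partial F)$ by an inclusion--exclusion over the subface poset $S(X)_{\leq F}$. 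So the first step is to set up a formula: $\rk H^*(F, \partial F)$ in terms of $\{\beta_i(X_{F'}) : F' \leq F\}$ and the combinatorics of $S(X)_{\leq F}$.

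The second step is to assemble these local contributions into $H^*(Q, Q_{-1})$. I would use the long exact sequences of the triples $Q_{j-1} \subset Q_j \subset Q$ and the excision-type splitting $H^*(Q_j, Q_{j-1}) \cong \bigoplus_{\rk F = j} H^*(F, \partial F)$, which holds because the rank-$j$ faces meet $Q_{j-1}$ exactly along their boundaries and are otherwise disjoint open pieces of $Q_j$. This reduces the computation of $\rk H^*(Q, Q_{-1})$ --- where $Q_{-1}$ plays the role of $Q_{k-1}$, the union of proper faces --- to knowing all the groups $H^*(Q_j, Q_{j-1})$ together with the connecting maps of the spectral sequence of the filtration. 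Rather than attempting to pin down the differentials abstractly, I expect the cleanest route is to observe that this is precisely the data encoded by the ABFP sequence: stripping off the free $H^*(BT;R)$ action, the sequence~\eqref{eqABseqForX} for $X$ becomes, after tensoring down to ordinary cohomology, a complex whose terms are $\bigoplus_{\rk F = j} H^*(F, \partial F)$ and whose total cohomology recovers $H^*(Q, Q_{-1})$ (and, at the other end, $H^*(Q)$ and $H^*(X)$). Since the maps $\delta_j$ are the explicit connecting homomorphisms, each of them is a finite-dimensional linear map once bases are fixed by the face poset and the Betti data, so every rank in sight is algorithmically computable.

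The main obstacle, I expect, is making the local step rigorous: identifying $\rk H^*(F, \partial F)$ from $\beta_*(X_F)$ and the subface combinatorics requires knowing that the orbit map $X_F \to F$ behaves like a trivial-enough bundle over $\mathring{F}$ (a free $(T/T_F)$-action over the interior, degenerating along the boundary in a controlled, face-by-face way), so that one can genuinely run the induction $X_F \to F$ and not merely quote a Hilbert-series identity. This is where one must use the structure theory of Construction~\ref{conInvarFacesEtc} --- connectedness of stabilizers, the geometric-lattice property of $S(X)_{\geq s}$ --- to guarantee that the boundary strata of $F$ are themselves faces and that no pathological collapsing occurs. Once that is in place, the rest is bookkeeping: an inclusion--exclusion over each $S(X)_{\leq F}$ to get the local ranks, followed by the linear algebra of the ABFP complex to get $\rk H^i(Q, Q_{-1})$. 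I would present the algorithm as: (i) for every face $F$ compute $\rk H^*(F,\partial F)$ by Möbius inversion over $S(X)_{\leq F}$ against the known $\beta_*(X_{F'})$; (ii) build the ABFP complex with these ranks as term dimensions and the $\delta_j$ as matrices read off from the face incidences and weights; (iii) output the ranks of the cohomology of this complex, the top one being $\rk H^i(Q, Q_{-1})$.
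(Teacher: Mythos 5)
Your opening is on the right track: you correctly invoke the ABFP sequence, the excision decomposition $H^*_T(X_j,X_{j-1})\cong\bigoplus_{\rk F=j}H^*_T(X_F,(X_F)_{-1})$, and the fact that the residual torus $T/T_F$ acts freely on $X_F\setminus(X_F)_{-1}$, which gives $H^*_T(X_F,(X_F)_{-1})\cong H^*(F,F_{-1})\otimes H^*(BT_F)$. And your step (i), once properly stated, is essentially the paper's recursion: writing $A_{X_F}(t)=\Hilb(H^{*+\rk F}(F,F_{-1});\sqrt t)$, the paper's argument yields $B_{X_F}(t)=\sum_{F'\leq F}A_{X_{F'}}(t)(t-1)^{\rk F'}$, and M\"obius inversion of this over $S(X)$ is exactly your proposed formula.

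The gap is in how you arrive at that identity and in what steps (ii)--(iii) are supposed to do. Your ``inclusion--exclusion over the subface poset'' and ``tensoring the ABFP sequence down to ordinary cohomology'' are gestures, not a derivation; in particular, tensoring an exact sequence of $H^*(BT)$-modules with $R$ over $H^*(BT)$ does not simply return an exact sequence of ordinary cohomology. The paper's actual move is both simpler and avoids your obstacle entirely: take the degreewise Euler characteristic of the ABFP sequence. Exactness alone gives $\Hilb(H_T^*(X);\sqrt t)=\sum_{j}(-1)^j\Hilb(H_T^{*+j}(X_j,X_{j-1});\sqrt t)$, a purely numerical identity in Hilbert series; substituting the excision and tensor decompositions, multiplying by $(1-t)^k$, and isolating the $j=k$ term yields $A_X(t)$ in terms of $B_X(t)$ and the lower-rank $A_{X_{F}}(t)$'s, which are handled by induction on $k=\dim T$ (base case $k=0$ is trivial). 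No knowledge of the differentials is used. This makes your steps (ii) and (iii) both unnecessary and wrong: the $\delta_j$ are connecting homomorphisms that are not determined by the face poset and Betti data (that is precisely why the paper avoids them), and since the ABFP sequence is exact under equivariant formality, ``the ranks of the cohomology of this complex'' would all be zero --- there is no cohomology of the ABFP complex to output. Once step (i) has been run for every face including the top one, you are already done.
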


\begin{proof}
The proof is by induction on $k=\dim T$, the dimension of the effectively acting torus. The base $k=0$ is trivial, since in this case $Q=X$, $Q_{-1}=\varnothing$, and the Betti numbers of $X$ are known by assumption.

Now consider an arbitrary $k>0$. Since the action is equivariantly formal, we have ABFP sequence~\eqref{eqABseqForX}. This is a long exact sequence of graded vector spaces, therefore taking Euler characteristic in each degree we get
\begin{equation}\label{eqABFPfoHilb}
\Hilb(H_T^*(X);\sqrt{t})=\sum_{j=0}^{k}(-1)^j\Hilb(H_T^{*+j}(X_j, X_{j-1});\sqrt{t}).
\end{equation}
Notice that $X_j$ is the union of all face submanifolds $X_F$ of rank $j$ (see Remark~\ref{remFiltrationIsUnionFaces}), so we have an isomorphism
\[
H_T^*(X_j, X_{j-1})\cong \bigoplus_{\rk F=j} H_T^*(X_F, (X_F)_{-1}),
\]
where $(X_F)_{-1}$ denotes the union of all proper face submanifolds of $X_F$, by the definition it lies in $X_{j-1}$. Furthermore, there is an effective action of $T/T_F$ on $X_F$, which is free\footnote{It would be almost free if we have not required all stabilizers to be connected.} on $X_F\setminus (X_F)_{-1}$. Therefore,
\[
H_T^*(X_F, (X_F)_{-1})\cong H^*(F, F_{-1})\otimes H^*(BT_F).
\]
Summarizing the above isomorphisms, we get
\begin{equation}\label{eqHilbIntermediate}
\Hilb(H_T^{*+j}(X_j, X_{j-1});\sqrt{t})=\sum_{\rk F=j}\dfrac{\Hilb(H^{*+j}(F, F_{-1});\sqrt{t})}{(1-t)^{k-j}}
\end{equation}
Notice that in the case $j=k$ the sum on the r.h.s. consists of a single summand, having trivial polynomial component. This last summand equals $\Hilb(H^{*+k}(Q, Q_{-1});\sqrt{t})$.

Substituting~\eqref{eqHilbIntermediate} and~\eqref{eqHilbertsRelation} into~\eqref{eqABFPfoHilb} we get
\[
\dfrac{\Hilb(H^*(X);\sqrt{t})}{(1-t)^k}=\sum_{j=0}^{k}\dfrac{(-1)^j}{(1-t)^{k-j}}\sum_{\rk F=j}\Hilb(H^{*+j}(F, F_{-1});\sqrt{t}).
\]
Multiplying by $(1-t)^k$ and separating the last term, we obtain
\begin{equation}\label{eqABInter}
\underbrace{\sum_i\beta_i(X)t^i}_{B_X(t)} = \underbrace{\sum_{j=0}^{k-1}\sum_{\rk F=j}\Hilb(H^{*+j}(F, F_{-1});\sqrt{t})(t-1)^j}_{\Inter_X(t)} + \underbrace{\Hilb(H^{*+k}(Q, Q_{-1});\sqrt{t})}_{A_X(t)}(t-1)^k.
\end{equation}
Here the notation $\Inter_X(t)$ stands for ``the intermediate polynomial''. We need to prove that $A_X(t)$ is computable. Notice that
\[
A_X(t)=\dfrac{B_X(t)-\Inter_X(t)}{(t-1)^k},
\]
and $B_X(t)$ is known. For the polynomial $\Inter_X(t)$, there is an expression
\begin{equation}\label{eqInterInductive}
\Inter_X(t)=\sum_{\rk F<k}A_{X_F}(t)(t-1)^j,
\end{equation}
which follows from the definition of all polynomials. The terms on the r.h.s. of \eqref{eqInterInductive} are already computed by induction, since all proper face submanifolds have ranks $<k$. This proves the statement.
\end{proof}

\begin{rem}\label{remOnMasPanovHomologyCells}
Using the inductive argument, described in the proof of Proposition~\ref{propInductiveRelative}, one can prove that, whenever $T^n$ acts on $X^{2n}$ equivariantly formally over $\Zo$ with isolated fixed points, there holds
\[
H^*(F,F_{-1};\Zo)\cong H^*(D^{\rk F},\dd D^{\rk F};\Zo).
\]
Therefore the orbit type filtration of $Q=X/T$ is a homology cell complex, and $Q$ itself is a homology cell. This result was first proved by Masuda and Panov in~\cite{MasPan}. The proof which utilizes ABFP sequence was proposed by the first author in~\cite{AyzMasEquiv}.
\end{rem}

Let $X_h=M_{\Gamma(h),\lambda}$ be the isospectral manifold of staircase matrices determined by a Hessenberg function $h\colon[n]\to[n]$. According to~\cite{AyzStaircase}, $X_h$ is equivariantly formal, and its even degree Betti numbers can be computed through Morse theory. More precisely,
\begin{equation}\label{eqBettiHessenberg}
\Hilb(H^*(X_h);\sqrt{t})=\sum_{\sigma\in\Sigma_n}t^{\inv_h(\sigma)},
\end{equation}
where $\inv_h(\sigma)=\#\{1\leqslant i<j\leqslant h(i)\mid \sigma(i)>\sigma(j)\}$ is the number of inversions in a permutation $\sigma$ subject to Hessenberg function $h$. Since the face poset $S(X_h)$ is known (this is the cluster permutohedron $\Cl_{\Gamma(h)}$), and every face submanifold of $X_h$ is again a manifold $X_{h'}$ for some Hessenberg function $h'$, Proposition~\ref{propInductiveRelative} implies the following

\begin{cor}
For any indifference graph $\Gamma(h)$ there exists an algorithm to compute $\rk H^i(Q,Q_{-1})$ where $Q=M_{\Gamma(h),\lambda}/T$.
\end{cor}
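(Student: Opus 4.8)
The plan is to recognize the corollary as a direct instance of Proposition~\ref{propInductiveRelative} applied to $X=X_h=M_{\Gamma(h),\lambda}$: that proposition \emph{is} the algorithm, so all that remains is to check that its three standing hypotheses are effectively available for $X_h$, namely equivariant formality of the action, knowledge of the face poset $S(X_h)$, and knowledge of the Betti numbers and ranks of every face submanifold (including $X_h$ itself). I would verify these one at a time. Equivariant formality of $X_h$ is the result of~\cite{AyzStaircase} recalled around~\eqref{eqBettiHessenberg}, and by Lemma~\ref{lemInvarIsFormal} it descends to every face submanifold. The face poset is identified by Proposition~\ref{propGirthAndGraphicahedron}(1) with the cluster-permutohedron $\Cl_{\Gamma(h)}$, which Construction~\ref{conClusterPerm} presents explicitly as the (ranked) poset of pairs $(\ca{C},A)$, a clustering together with an assignment; this poset is directly enumerable from $\Gamma(h)$ by a machine.

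The substantive step is the description of the face submanifolds. Running the block argument from the proof of Lemma~\ref{lemInducedGeneral}, the face submanifold $X_F$ attached to a clustering $\ca{C}=\{V_1,\dots,V_k\}$ consists of block-diagonal $\Gamma(h)$-shaped matrices with one block per cluster, so a relabeling of type~\eqref{eqRelabelDiffeo} identifies it with a product $\prod_{i=1}^{k}M_{\Gamma_{V_i},\lambda^{(i)}}$ over some partition of $\lambda$ into pieces of sizes $|V_i|$; its rank — the dimension of the torus acting effectively on it — equals $n-k$ with $k=|\ca{C}|$, read off from $\ca{C}$. Each $\Gamma_{V_i}$ is a connected induced subgraph of the indifference graph $\Gamma(h)$, hence itself an indifference graph directly from Definition~\ref{definIndifGraph} (restrict the collection of unit intervals), hence isomorphic to $\Gamma(h'_i)$ for an explicitly computable Hessenberg function $h'_i$ by Proposition~\ref{propMertzios}. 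Therefore each factor has Hilbert--Poincar\'e series given by~\eqref{eqBettiHessenberg}, and the K\"unneth formula gives that of $X_F$ as the product of these. Feeding $S(X_h)=\Cl_{\Gamma(h)}$, the ranks, and these Betti numbers into Proposition~\ref{propInductiveRelative} yields the claimed algorithm.

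The main point requiring care — the ``obstacle'' — is that a face submanifold is in general a \emph{product} of Hessenberg manifolds rather than a single one, so one must check that the induction on $\dim T$ inside the proof of Proposition~\ref{propInductiveRelative} never leaves this computable class. This holds because a face submanifold of a product $\prod_i M_{\Gamma(h'_i),\lambda^{(i)}}$ is again a product of face submanifolds of the factors (the fixed set of a connected subgroup of a product torus is the product of the fixed sets of its projections), and each of those is, by the same block argument, once more a product of Hessenberg manifolds. Combined with the facts that the relabeling diffeomorphisms~\eqref{eqRelabelDiffeo} preserve Betti numbers, and that for generic $\lambda$ the Betti numbers of $X_F$ do not depend on which eigenvalues are assigned to which block, this closes the induction and the corollary follows.
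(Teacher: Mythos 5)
Your argument is essentially the paper's: both reduce the corollary to an invocation of Proposition~\ref{propInductiveRelative}, supplying the three required inputs via the identification $S(X_h)\cong\Cl_{\Gamma(h)}$ from Proposition~\ref{propGirthAndGraphicahedron}, the equivariant formality of $X_h$ from~\cite{AyzStaircase}, and the Hilbert--Poincar\'e series~\eqref{eqBettiHessenberg} for the Betti numbers of the face submanifolds. You are in fact more careful than the paper's one-line justification, which asserts that every face submanifold ``is again a manifold $X_{h'}$'' whereas it is really a product $\prod_i M_{\Gamma_{V_i},\lambda^{(i)}}$ of Hessenberg manifolds (i.e., a connected component of a disconnected Hessenberg manifold); your handling via K\"unneth, the closure of connected induced subgraphs of indifference graphs under restriction of the unit-interval representation, and the check that the induction in Proposition~\ref{propInductiveRelative} stays inside the class of such products is a welcome refinement of the same route rather than a different one.
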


\begin{rem}\label{remWeComputedHessenbergs}
We implemented the algorithm described in the proof of Proposition~\ref{propInductiveRelative} in~\cite{AyzCode}\footnote{the file titled ``Characteristics\_of\_Hess\_Varieties''}. It outputs the list of all indifference graphs with up to 5 vertices, shows the ordinary Betti numbers of the corresponding manifolds $M_{\Gamma(h),\lambda}$ (the polynomials $B_{M_{\Gamma(h),\lambda}}(t)$), and computes relative cohomology of the orbit spaces (the polynomials $A_{M_{\Gamma(h),\lambda}}(t)$).
\end{rem}

\begin{ex}
Our computational result can be checked in the case $\Gamma=K_3$, corresponding to the full flag variety $\Fl_3$. This particular case is well studied. It was proved in~\cite{BT2}, that the orbit space $Q=\Fl_3/T^2$ is homeomorphic to the 4-sphere $S^4$. On the other hand $Q_{-1}$ is just the GKM-graph of the torus action on $\Fl_3$. The latter, as an ordinary graph, is known to be homeomorphic to the complete bipartite graph $K_{3,3}$, see~\cite{GHZ}. Therefore
\[
H^j(Q,Q_{-1};\Zo)\cong H^j(S^4,K_{3,3};\Zo)\cong\begin{cases}
                                          \Zo, & \mbox{if } j=4 \\
                                          \Zo^4, & \mbox{if } j=2 \\
                                          0, & \mbox{otherwise},
                                        \end{cases}
\]
as follows from the long exact sequence of the pair $(S^4,K_{3,3})$. Our script outputs $A(t)=4+t$ for this graph, as expected (notice the degrees' shift in the definition of $A(t)$).
\end{ex}

Finally, we are ready to prove non-equivariant formality of $M_{\Sunn,\lambda}$.

\begin{proof}[Proof of Lemma~\ref{lemSunGraph}]
Within this proof, we write $M$ for $M_{\Sunn,\lambda}$ to simplify notation. Again, we assume that, on the contrary, the effective action of $T^5$ on $M$ is equivariantly formal, and this will lead to contradiction.

Equivariant formality implies that the arguments from the proof of Proposition~\ref{propInductiveRelative} are applicable to $M$, in particular, relation~\eqref{eqABInter} holds true:
\begin{equation}\label{eqInterABforSun}
B_M(t)=\Inter_M(t)+A_M(t)\cdot (t-1)^5.
\end{equation}
The polynomial $\Inter_M(t)$ is expressed as the sum over all proper face submanifolds of~$M$. However, these submanifolds correspond to proper induced subgraphs of $\Sunn$, which are all indifference graphs. Therefore topological characteristics of their isospectral manifolds are computed by induction, see Remark~\ref{remWeComputedHessenbergs}. Gathering all computations together, we get
\[
\Inter_M(t)=306-1362t+2322t^2-1560t^3+540t^4+384t^5+72t^6+18t^7.
\]
Let $B_M(t)=\sum_{j=0}^{9}b_jt^j$ and $A_M(t)=\sum_{j=0}^{4}a_jt^j$, where $a_j$ and $b_j$ are unknown. Formal relation~\eqref{eqInterABforSun} and Poincare duality $b_j=b_{9-j}$, $j=0,\ldots,4$ give the linear system of $14$ equations in $14$ variables $a_j$, $b_j$. Unfortunately, this system is degenerate: there exists a 2-parametric space of solutions, in particular, we have
\begin{equation}\label{eqB0B1B2}
b_0 = -r_1 + 306,\quad b_1 = 5r_1 + r_2 - 1530,\quad b_2 = -10r_1 - 5r_2 + 3120,
\end{equation}
for arbitrary $r_1,r_2\in\Zo$ (other values $b_j$ and $a_j$ are not essential for the arguments). Next, we have $b_0=1$ since $M$ is connected, therefore $r_1=305$, but $r_2$ still remains in the expression.

Now let us compute $\beta_{2j}(M)$ by Algorithm~\ref{algorGKM} using GKM-theory for $j\leqslant 2$, see~\cite{AyzCode}\footnote{the file titled ``GKM\_for\_Cluster-Permutohedra''}. The calculation gives
\begin{equation}\label{eqBeta24Sunn}
\beta_2(M)=5,\quad \beta_4(M)=29
\end{equation}
Putting $b_1=5$ in~\eqref{eqB0B1B2} determines the value $r_2=10$, and we get $b_1=5$, $b_2=20$. This contradicts to $\beta_4(M)=29$ obtained from GKM-theory. This inconsistency shows that the assumption of equivariant formality of $M=M_{\Sunn,\lambda}$ was false.  
\end{proof}

Notice that most arguments in the proof above are purely combinatorial, hence they do not depend on the coefficient field. The calculation of Betti numbers by Algorithm~\ref{algorGKM} outputs the same values of $\beta_2$ and $\beta_4$ for the fields $\Qo$ and $\Zt$.

\section{Real symmetric matrices and graph invariants}\label{secLast}

\subsection{Real symmetric matrices}

In the previous parts of the paper we considered isospectral manifolds of Hermitian complex matrices. We can do the same calculations for their real versions: the manifolds of isospectral real symmetric matrices. The proofs follow the same lines, but some references should be substituted by their discrete torus versions proved recently.

\begin{con}
Let $M_{\Gamma,\lambda}^{\Ro}$ denote the space of all $\Gamma$-shaped real symmetric matrices with the given spectrum $\lambda$. If $\lambda$ is generic, $M_{\Gamma,\lambda}^{\Ro}$ is a smooth closed manifold of real dimension $|E_\Gamma|$. Let $T_\Ro$ denote the discrete group $\{\pm 1\}^n\cong\Zt^n$, we call it a 2-torus or a discrete torus. The group $T_\Ro$ can be identified with the subgroup of $O(n)$ which consists of diagonal matrices with $\pm 1$ on the diagonal. Then $T_\Ro$ acts on symmetric matrices by conjugation: this action preserves both the sparseness type $\Gamma$ and the spectrum. Therefore, we have a smooth $T_\Ro$-action on $M_{\Gamma,\lambda}^{\Ro}$. Notice that the fixed points set $(M_{\Gamma,\lambda}^{\Ro})^{T_\Ro}$ consists of diagonal matrices with $\lambda_i$'s at the diagonal; there are $n!$ isolated fixed points.
\end{con}

We have the following real version of Theorems~\ref{thmMainDtypeChar} and~\ref{thmNotEquivFormal}.

\begin{thm}\label{thmRealShit}
The following are equivalent
\begin{enumerate}
  \item A manifold $M_{\Gamma,\lambda}^{\Ro}$ admits a Morse--Smale system whose stationary points are the diagonal matrices.
  \item The 2-torus action on $M_{\Gamma,\lambda}^{\Ro}$ is equivariantly formal over $\Zt$.
  \item $\Gamma$ is an indifference graph.
\end{enumerate}
\end{thm}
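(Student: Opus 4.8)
The plan is to mirror the complex case, replacing the compact torus $T^n$ by the discrete 2-torus $T_\Ro=\Zt^n$ and working throughout over the coefficient field $\Zt$. The implication $(1)\Rightarrow(2)$ follows from the same Morse-theoretic argument as in the proof of Proposition~\ref{propMainOneSide}: by Smale's theorem a Morse--Smale system with the $n!$ diagonal matrices as its only periodic trajectories yields a Morse flow with $n!$ critical points, so $\dim H_*(M_{\Gamma,\lambda}^\Ro;\Zt)\leqslant n!$; combined with $\chi(M_{\Gamma,\lambda}^\Ro)=\#(M_{\Gamma,\lambda}^\Ro)^{T_\Ro}=n!$ (the number of fixed points equals the Euler characteristic for 2-torus actions with isolated fixed points, by the standard mod-2 Lefschetz argument), the inequality $\dim H_{\odd}(M_{\Gamma,\lambda}^\Ro;\Zt)=0$ holds, which is precisely $\Zt$-equivariant formality by the mod-2 analogue of Lemma~\ref{lemEquivFormFixedPoints}. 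For the implication $(3)\Rightarrow(1)$, if $\Gamma$ is an indifference graph then $\Gamma\cong\Gamma(h)$ for some Hessenberg function $h$ by Proposition~\ref{propMertzios}, and the real Toda flow restricts to $M_{\Gamma(h),\lambda}^\Ro$ just as in the complex case (see~\cite{AyzStaircase}); it is a Morse--Smale gradient flow whose stationary points are exactly the diagonal matrices, establishing $(1)$. The diffeomorphism $\diff_\sigma$ of~\eqref{eqRelabelDiffeo} has an obvious real analogue, so $(1)$ and $(3)$ depend only on the isomorphism type of $\Gamma$.

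The substantive content is the implication $(2)\Rightarrow(3)$, equivalently: if $\Gamma$ is not an indifference graph then the $T_\Ro$-action on $M_{\Gamma,\lambda}^\Ro$ is not $\Zt$-equivariantly formal. The strategy is to transplant the whole machinery of Sections~\ref{secKnownToricCySt} and~\ref{secSunAndNet} to the real setting. First one needs the real analogue of the localization/restriction lemma: for a coordinate subgroup $H=(\Zt)^{[n]\setminus B}\subseteq T_\Ro$, the fixed submanifold $(M_{\Gamma,\lambda}^\Ro)^H$ has connected components diffeomorphic to $M_{\Gamma_B,\lambda_B}^\Ro$, and the mod-2 analogue of Lemma~\ref{lemInvarIsFormal} (equivariant formality over $\Zt$ descends to fixed components of subgroups) reduces the problem to the four forbidden graphs $\Cy_k$ ($k\geqslant4$), $\St_3$, $\Net$, $\Sunn$ of Proposition~\ref{propIndiffCharact}. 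The key point is that the face poset of the $T_\Ro$-action on $M_{\Gamma,\lambda}^\Ro$ is again the cluster-permutohedron $\Cl_\Gamma$ — this is the real-toric analogue of Proposition~\ref{propGirthAndGraphicahedron}, and it holds because the combinatorics of invariant submanifolds (block structure of symmetric matrices) is identical to the Hermitian case — and that the acyclicity obstruction of Proposition~\ref{propAcyclicityOfPoset} has a real-toric counterpart over $\Zt$ (such statements have been established in real toric topology; the connectedness-of-stabilizers hypothesis is vacuous for $\Zt$-coefficients). Granting these, the non-formality of $M_{\Cy_k,\lambda}^\Ro$ and $M_{\St_3,\lambda}^\Ro$ follows from the nontriviality of $H_1$ of the $2$-skeleton of $\Cl_{\Cy_k}$ and $\Cl_{\St_3}$ exactly as in Section~\ref{secKnownToricCySt}, and the non-formality of $M_{\Net,\lambda}^\Ro$ and $M_{\Sunn,\lambda}^\Ro$ follows from the computer computation of Section~\ref{secSunAndNet} showing that $|(\Cl_{\Net})_4|$ and $|(\Cl_{\Sunn})_4|$ fail to be $3$-acyclic over $\Zt$ — note these computations were already performed over $\Zt$, so nothing new needs to be run.

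I expect the main obstacle to be verifying that the real toric analogues of the three imported black boxes genuinely hold over $\Zt$: namely (i) the identification of the face poset of $M_{\Gamma,\lambda}^\Ro$ with $\Cl_\Gamma$, (ii) the descent of $\Zt$-equivariant formality to fixed components of coordinate subgroups, and (iii) the mod-2 ABFP/acyclicity package of Proposition~\ref{propAcyclicityOfPoset}. Points (i) and (ii) are essentially formal and follow from the same block-matrix and Borel-construction arguments with $\Zt$ in place of $T^1$; point (iii) is the delicate one, but the recent literature on real toric topology (real analogues of the Atiyah--Bredon--Franz--Puppe sequence and of the Masuda--Panov orbit-space criterion) supplies exactly the needed statements, with the simplification that over $\Zt$ the ``all stabilizers connected'' hypothesis is automatically satisfied. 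Once these are in place, the proof is a line-by-line translation of the complex argument, and the dimension count $\dim_\Ro M_{\Gamma,\lambda}^\Ro=|E_\Gamma|$ together with $\chi=n!$ again forces the total Betti number to exceed $n!$ whenever $\Gamma$ is forbidden, contradicting $(2)$.
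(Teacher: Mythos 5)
Your implications $(3)\Rightarrow(1)$ and $(2)\Rightarrow(3)$ are sound, and the second is a genuinely different route from the paper. But $(1)\Rightarrow(2)$ as written contains a real error.

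\textbf{On $(1)\Rightarrow(2)$.} You assert that $\chi(M^\Ro_{\Gamma,\lambda})=\#(M^\Ro_{\Gamma,\lambda})^{T_\Ro}=n!$ ``by the standard mod-2 Lefschetz argument'', and then conclude $H^{\odd}(M^\Ro;\Zt)=0$ and identify this with $\Zt$-equivariant formality. Neither step survives in the real setting. For a compact torus action with isolated fixed points the Euler characteristic equals the number of fixed points, but for a $\Zt^n$-action one only has $\chi(X)\equiv\chi(X^{T_\Ro})\pmod 2$ from Smith theory, not equality; in fact $\dim_\Ro M^\Ro_{\Sunn,\lambda}=|E_{\Sunn}|=9$ is odd, so $\chi(M^\Ro_{\Sunn,\lambda})=0\ne 6!$. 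Moreover $\Zt$-equivariant formality for a discrete $2$-torus is \emph{not} the vanishing of odd cohomology (there is no parity constraint on cells in the real case); as the paper sets up in Construction~5.4, it is the Smith-theoretic equality $\dim_{\Zt}H_*(X^{T_\Ro};\Zt)=\dim_{\Zt}H_*(X;\Zt)$. The correct and shorter argument, which the paper uses (Corollary~\ref{corMorseFormReal}), is: a Morse--Smale flow with $n!$ stationary points gives a cell decomposition with $n!$ cells, hence $\dim_{\Zt}H_*(M^\Ro;\Zt)\leqslant n!$, while Smith theory gives $\dim_{\Zt}H_*(M^\Ro;\Zt)\geqslant n!$; equality is exactly formality. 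The conclusion is right, but your reasoning to reach it is not.

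\textbf{On $(2)\Rightarrow(3)$.} Here your plan differs from the paper's and is worth comparing. You propose to use the real-toric analogue of Proposition~\ref{propAcyclicityOfPoset} \emph{uniformly} for all four forbidden graphs: nontrivial $H_1$ of the $2$-skeleton of $\Cl_\Gamma$ for $\Cy_k$ and $\St_3$, and failure of $3$-acyclicity of the $4$-skeleton (the $\Zt$-computations of Proposition~4.1) for $\Net$ and $\Sunn$. The paper instead treats each graph differently: Yu's real Masuda--Panov orbit-space criterion for $\St_3$, the $H_1$-acyclicity obstruction only for $\Cy_k$, the real GKM Betti-number count for $\Net$, and the inductive ABFP/Hilbert-series argument for $\Sunn$. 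Your approach is conceptually cleaner and avoids the GKM and Hilbert-series computations, but it leans on the full real version of Proposition~\ref{propAcyclicityOfPoset} in the regime $j=2$, $r=4$. The paper only makes the real acyclicity claim explicit for the $H_1(|S(X)_2|;\Zt)=0$ case, citing~\cite{AFP} for the $\Zt$-ABFP sequence and saying ``the same argument works for 2-torus actions''; the higher-degree statement is plausible (the derivation from ABFP should transfer, and the weights of the $T_\Ro$-action at a fixed point are $e_i+e_j\in\Zt^n$ for edges $\{i,j\}$, whose minimal linear dependence over $\Zt$ is again a cycle, so the girth-based $2$-independence for $\Net$ and $\Sunn$ holds over $\Zt$), but you would owe the reader a proof or a precise citation for the real $\min(j+1,r-1)$-acyclicity beyond the $r=2$ case. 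The paper's more case-by-case route avoids having to establish this.
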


Before proving the theorem we give several important remarks. First, we need to explain what is meant by equivariant formality in the case of discrete torus. The definition of formality can be rewritten, replacing $T$ with $T_\Ro$, and the coefficient ring with the particular field $\Zt$. However, there is an equivalent way to define equivariant formality which is more classical, as well as more convenient in practice. The proof of equivalence of these two approaches can be found e.g. in~\cite[Ch.IV(B) Cor.2]{Hsiang}.

\begin{con}
Let a 2-torus $T_\Ro$ act on a space\footnote{Some assumptions should be imposed on a space, which are certainly satisfied for smooth actions on compact manifolds.} $X$ with $m$ isolated fixed points. Smith theory implies
\begin{equation}\label{eqSmithMain}
m=\dim_{\Zt}H_*(X^{T_\Ro};\Zt)\leqslant \dim_{\Zt}H_*(X;\Zt).
\end{equation}
If there is an equality in~\eqref{eqSmithMain}, the action is called \emph{equivariantly formal} over $\Zt$.
\end{con}

\begin{lem}
Let a 2-torus $T_\Ro$ act on $X$ with $m$ isolated fixed points. If $X$ has a cell structure with $m$ cells, then the 2-torus action is equivariantly formal over $\Zt$.
\end{lem}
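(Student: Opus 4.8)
The statement is the discrete-torus analogue of the fact, used earlier in the paper, that a manifold with an even cell decomposition realizing the number of fixed points is equivariantly formal. The plan is to squeeze $\dim_{\Zt}H_*(X;\Zt)$ between two bounds that both equal $m$.

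First I would invoke cellular homology over the field $\Zt$. Given a CW structure on $X$ with exactly $m$ cells, the cellular chain complex $C_*(X;\Zt)$ is a complex of $\Zt$-vector spaces with $\sum_i \dim_{\Zt} C_i(X;\Zt) = m$. Since homology is a subquotient of the chains, this yields the upper bound
\[
\dim_{\Zt} H_*(X;\Zt) \;\leqslant\; \sum_i \dim_{\Zt} C_i(X;\Zt) \;=\; m .
\]
Second, I would recall the Smith-theory inequality~\eqref{eqSmithMain}: since $X^{T_\Ro}$ consists of $m$ isolated points, $\dim_{\Zt} H_*(X^{T_\Ro};\Zt) = m$, and Smith theory gives
\[
m \;=\; \dim_{\Zt} H_*(X^{T_\Ro};\Zt) \;\leqslant\; \dim_{\Zt} H_*(X;\Zt).
\]

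Combining the two displays forces $\dim_{\Zt} H_*(X;\Zt) = m$, i.e.\ equality holds in~\eqref{eqSmithMain}, which is exactly the definition of equivariant formality of the $T_\Ro$-action over $\Zt$. There is no real obstacle here; the only points requiring a word of care are that the cellular bound genuinely works over the field $\Zt$ (so no torsion corrections arise, unlike over $\Zo$) and that ``$m$'' in the hypothesis simultaneously denotes the number of cells and the number of fixed points, so that the two bounds match up on the nose. In the applications the relevant cell structures on $M_{\Gamma,\lambda}^{\Ro}$ come from Morse-type decompositions, and this lemma is what lets one transfer the complex arguments of the paper to the real setting.
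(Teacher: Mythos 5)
Your argument is exactly the paper's: bound $\dim_{\Zt}H_*(X;\Zt)$ above by the total number of cells via the cellular chain complex over $\Zt$, and below by $m=\dim_{\Zt}H_*(X^{T_\Ro};\Zt)$ via Smith theory, so equality holds in~\eqref{eqSmithMain}. The proposal is correct and matches the paper's proof.
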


\begin{proof}
Cell structure implies
\[
\dim_{\Zt}H_*(X;\Zt)\leqslant \dim_{\Zt}C_*(X;\Zt)=m=\dim_{\Zt} H_*(X^{T_\Ro};\Zt),
\]
which proves the statement.
\end{proof}

Morse theory then implies

\begin{cor}\label{corMorseFormReal}
Let a 2-torus $T_\Ro$ act on $X$ with $m$ isolated fixed points. If there is a Morse--Smale flow on $X$ with $m$ stationary points, then the action is equivariantly formal.
\end{cor}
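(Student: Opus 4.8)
The plan is to pinch $\dim_{\Zt}H_*(X;\Zt)$ between two inequalities and read off the definition of equivariant formality. On one side we already have the Smith-theoretic bound~\eqref{eqSmithMain}, which gives $m=\dim_{\Zt}H_*(X^{T_\Ro};\Zt)\leqslant \dim_{\Zt}H_*(X;\Zt)$; since equivariant formality over $\Zt$ is, by definition, the assertion that this is an equality, it suffices to establish the reverse inequality $\dim_{\Zt}H_*(X;\Zt)\leqslant m$.

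For the reverse inequality I would argue exactly as in the proof of Proposition~\ref{propMainOneSide}. A Morse--Smale flow on the compact closed manifold $X$ has, by definition, only hyperbolic stationary points, so Smale's theorem~\cite{Smale} produces a genuine Morse function $f$ on $X$ (for a suitable Riemannian metric) whose critical points have the same indices as the stationary points of the flow; in particular $f$ has exactly $m$ critical points. Classical Morse theory then gives $X$ the homotopy type of a CW complex with one cell per critical point of $f$, hence a chain complex $C_*(X;\Zt)$ of total dimension $m$, so that $\dim_{\Zt}H_*(X;\Zt)\leqslant \dim_{\Zt}C_*(X;\Zt)=m$. This is precisely the content of the lemma preceding the statement, applied to the cell structure furnished by $f$.

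Combining the two bounds yields $\dim_{\Zt}H_*(X;\Zt)=m$, i.e. the $T_\Ro$-action is equivariantly formal over $\Zt$. There is essentially no serious obstacle here; the only points that require a word of care are that the hypotheses invoked (hyperbolicity of the stationary points, compactness and closedness of $X$) are all built into the notion of a Morse--Smale flow on a closed manifold, and that in the intended applications — in particular $X=M_{\Gamma,\lambda}^{\Ro}$ — the underlying space really is a compact closed manifold, so that Morse theory and Smith theory apply verbatim.
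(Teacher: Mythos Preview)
Your argument is correct and follows the same route as the paper: the paper deduces the corollary from the preceding lemma by noting that Morse theory supplies the required cell structure with $m$ cells, which is exactly what you do. Your additional remark, that one first passes from a Morse--Smale flow to a genuine Morse function via Smale's theorem (as in the proof of Proposition~\ref{propMainOneSide}), makes explicit a step the paper leaves implicit here.
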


Now let us prove Theorem~\ref{thmRealShit}.

\begin{proof}[Proof of Theorem~\ref{thmRealShit}]
The 2-torus action on $M_{\Gamma,\lambda}^{\Ro}$ has $n!$ isolated fixed points.

\textbf{(3)$\Rightarrow$(1).} If $\Gamma$ is an indifference graph, then, probably after some relabelling of vertices, we have $\Gamma=\Gamma(h)$ for some Hessenberg function. There is a Toda flow on $M_{\Gamma,\lambda}^{\Ro}$ having $n!$ stationary points. See~\cite{dMP} for details and generalizations to other Lie types.

\textbf{(1)$\Rightarrow$(2).} Apply Corollary~\ref{corMorseFormReal}.

\textbf{(2)$\Rightarrow$(3).} We prove that if $\Gamma$ is not an indifference graph, then $M_{\Gamma,\lambda}^{\Ro}$ is not formal. The complete analogue of Lemma~\ref{lemInvarIsFormal} holds for 2-torus actions, the proof follows from Smith theory. The real analogue of Lemma~\ref{lemInducedGeneral} follows as well: if $\Gamma'$ is an induced subgraph of $\Gamma$, then $M_{\Gamma',\lambda}^{\Ro}$ is contained among invariant submanifolds of $M_{\Gamma,\lambda}^{\Ro}$. Therefore we only need to prove non-formality of $M_{\Gamma,\lambda}^{\Ro}$ for $\Gamma$ being one of the forbidden subgraphs: $\Cy_k, (k\geqslant 4)$, $\St_3$, $\Net$, $\Sunn$.

Notice that the face poset of (the 2-torus action on) $M_{\Gamma,\lambda}^{\Ro}$ is isomorphic to the cluster-permutohedron $\Cl_\Gamma$, the proof is completely similar to its complex version~\cite[Thm.1]{AyzBuchGraph}. All homological arguments used in our proofs can be translated to 2-torus actions, up to division of degrees by $2$. For example, it is a general phenomenon that
\begin{equation}\label{eqRealCompl}
H^{2j}(X;\Zt)\cong H^j(X^{\Ro};\Zt)
\end{equation}
for the real locus $X^{\Ro}$ of a (complex or symplectic) manifold $X$, see~\cite{HHP} for a general exposition of this subject. We give a bit more details and references below.

\begin{enumerate}
  \item[$\St_3$.] In this case the torus action has complexity zero. In the complex case, we utilized the equivariant formality criterion proved by Masuda and Panov~\cite{MasPan}. The real version of this criterion is proved in the recent paper of Yu~\cite{LiYu}. This criterion applies to prove non-formality of $M_{\St_3,\lambda}^{\Ro}$.
  \item[$\Cy_k$.] The acyclicity of skeleta of face posets was proved in~\cite{AyzMasSolo} for torus actions with the proof based on ABFP sequence. The version of ABFP sequence with coefficients in $\Zt$ for actions of discrete 2-tori seem to first be discussed in~\cite{Puppe}. The exactness of this sequence for equivariantly formal actions was proved in~\cite{AFP}. Acyclicity of the skeleta $S(X)_r$ stated in Proposition~\ref{propAcyclicityOfPoset} is proved by specializing ABFP sequence to degree 0 (and applying some technical machinery of homotopy colimits). The same argument works for 2-torus actions: if an action of~$T_\Ro$, $\rk T_\Ro\geqslant 3$ on $X$ is equivariantly formal, then $H_1(|S(X)_2|;\Zt)=0$. Since $H_1(|(\Cl_{\Cy_k})_2|;\Zt)\neq 0$ for $k\geqslant 4$, the manifold $M_{\Cy_k,\lambda}^{\Ro}$ is not formal.
  \item[$\Net$.] In the complex case, we came to contradiction by computing Betti numbers using GKM-theory. The real version of GKM-theory exists as well. One can notice that ``real GKM'' is the consequence of Chang--Skjelbred theorem, which is a part of ABFP sequence, so the fact that ABFP is exact for equivariantly formal 2-torus actions implies the real version of GKM-theory. We also refer to~\cite{Biss} for the related discussion. Since GKM-theory holds true, our computations over $\Zt$ in~\cite{AyzCode} are still valid. Computational experiments show that
      \[
      \dim H_*(M_{\Net,\lambda}^{\Ro};\Zt)=630>6!=\dim H_*((M_{\Net,\lambda}^{\Ro})^{T_\Ro};\Zt)
      \]
      which contradicts to the definition of equivariant formality of a 2-torus action.
  \item[$\Sunn$.] Again, everything follows from the real version of the exact ABFP sequence. It should be noticed that in this case we also used the precise formula~\eqref{eqBettiHessenberg} for Betti numbers of the manifolds of isospectral staircase matrices. The same formula holds for their real loci, up to division of degrees by $2$ and changing coefficient ring to $\Zt$, see~\cite{dMP} and~\cite{AyzStaircase}.
\end{enumerate}
Therefore the whole pipeline of the proof works for discrete torus as well.
\end{proof}

\subsection{Indifference hulls}

The graphs $\Gamma$ which are not indifference graphs produce non-diagonalizable matrix types. However, if $\Gamma\subset\Gamma'$ for an indifference graph $\Gamma'$, then $\Gamma$-shaped matrix can be considered as $\Gamma'$-shaped matrix and can be asymptotically diagonalized in the class of $\Gamma'$-shaped matrices. The motivates the following definition.

\begin{defin}
Let $\adi(\Gamma)$ denote the minimal number of edges needed to be added to $\Gamma$ so that the resulting graph is an indifference graph.
\end{defin}

The number $\adi(\Gamma)$ stores the information on how many additional entries of $\Gamma$-shaped matrix should be stored in memory in one wants to perform an asymptotic diagonalization (e.g. QR-algorithm) on a matrix. 

Since indifference graphs are represented on a line (see Definition~\ref{definIndifGraph}), the problem of computing $\adi(\Gamma)$ is related to the problem of finding the layout of $\Gamma$ on $\Ro$ which is optimal in some sense. This invariant is closely related to the invariant studied in~\cite{KapSham}: the minimal size of the maximal clique among all indifference graphs $\Gamma'$ containing $\Gamma$. The latter invariant is proved to be one greater than \emph{the bandwidth} of a graph. In terms of matrices, the problem of computing the bandwidth corresponds to embedding a $\Gamma$-shaped matrix into a band matrix of the minimal width. Relations between graph algorithms and matrix diagonalization problems are also described with the notion of \emph{treewidth}, see~\cite{FHTr} and references therein.

\begin{ex}
Cycle graphs $\Cy_n$ correspond to periodic tridiagonal matrices,
\begin{equation}\label{eqCyclicMatrix}
\begin{pmatrix}
a_1 & b_1& 0 & \cdots & \overline{b}_n\\
\overline{b}_1& a_2 & b_2 & 0 & \vdots\\
0 & \overline{b}_2 & a_3 & \ddots & 0\\
\vdots & 0 & \ddots &\ddots&  b_{n-1}\\
b_n& \cdots & 0 & \overline{b}_{n-1} &a_n
\end{pmatrix},
\end{equation}
see~\cite{AyzPeriodic} for details. We have $\adi(\Cy_n)=n-3$. Indeed, a particular way of turning a cycle into an indifference graph is shown on Fig.~\ref{figCycleSew}. The optimality of such pattern can be easily proven by induction on $n$. This means that a periodic tridiagonal matrix can be embedded into a pentadiagonal matrix: the one which corresponds to the Hessenberg function $(3,4,5,\ldots,n,n)$, or the graph shown on the right part of Fig.~\ref{figCycleSew}. This may seem counterintuitive at first glance, since the obvious way of making~\eqref{eqCyclicMatrix} into a Hessenberg matrix is to fill out the whole matrix. However, one should remember that reordering of rows and columns is allowed, which makes the described trick possible.
\end{ex}

\begin{figure}[h]
\begin{center}
\includegraphics[scale=0.35]{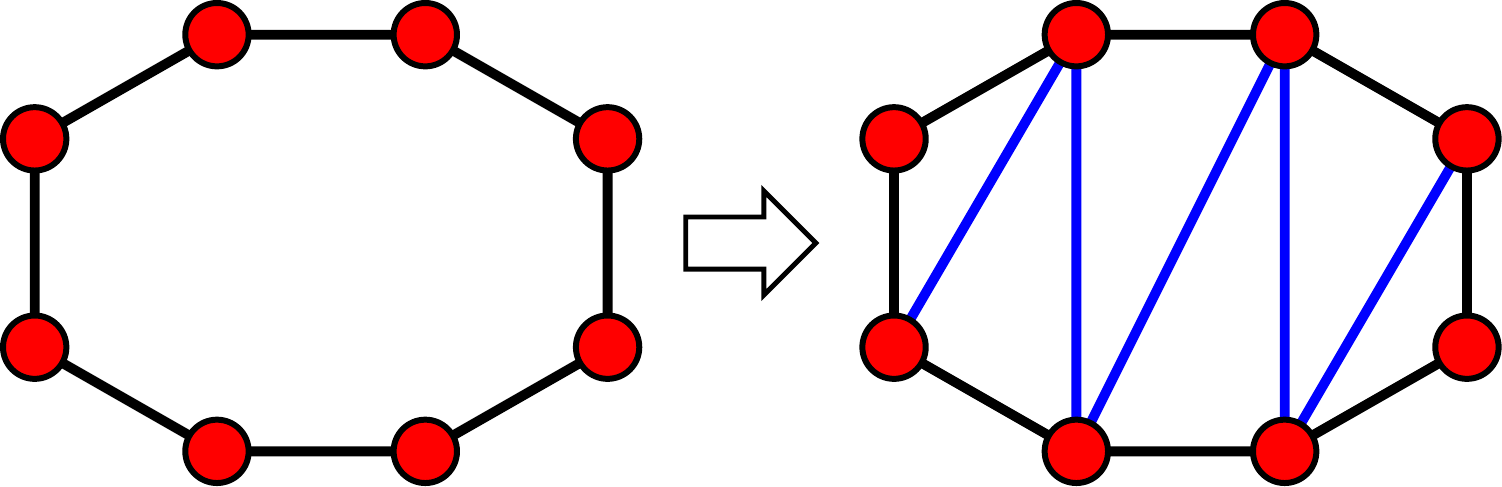}
\end{center}
\caption{Embedding of a cycle into an indifferent graph}\label{figCycleSew}
\end{figure}

\begin{rem}
The previous example implies that for any graph $\Gamma$, there holds
\begin{equation}\label{eqGirthSeam}
\adi(\Gamma)\geqslant\girth(\Gamma)-3.
\end{equation}
For certain, there exists a number of relations of $\adi(\cdot)$ to other known graph invariants. These relations, as well as computational complexity issues will be addressed in a different paper.
\end{rem}

\section*{Acknowledgements}
We thank Oleg Kachan and Eduard Tulchinskiy for their persistent help with parallelizing some of the computations at HSE University supercomputer ``cHARISMa''. Li Yu and Vlad Gorchakov had shared some relevant references on 2-torus actions which were quite helpful. The first author thanks Prof. Mikiya Masuda for organizing the workshop ``Hessenberg varieties in Osaka 2019'' where a very fruitful discussion of graph-theoretical approaches to Stanley--Stembridge conjecture had emerged that gave a push to this study. The conference on data science in Voronovo organized by Evgeny Sokolov, motivated the first author to find connections between toric topology and gradient descent algorithms which eventually led to this research.

\end{document}